\numberwithin{equation}{section} 
\definecolor{gg}{RGB}{10,150,10}
\newtheorem{thm}{Theorem}[section]
\newtheorem{ass}{Assumption}[section]
\newtheorem{coro}{Corollary}[section]
\newtheorem{rmk}{Remark}[section]
\newtheorem{lem}{Lemma}[section]
\newenvironment{keywords}
{\par\noindent\textbf{Keywords:}}
\definecolor{ocre}{RGB}{243,102,25}
\definecolor{mygray}{RGB}{243,243,244}
\definecolor{deepGreen}{RGB}{26,111,0}
\definecolor{shallowGreen}{RGB}{235,255,255}
\definecolor{deepBlue}{RGB}{61,124,222}
\definecolor{shallowBlue}{RGB}{235,249,255}
\newtheoremstyle{mytheoremstyle}{3pt}{3pt}{\normalfont}{0cm}{\rmfamily\bfseries}{}{1em}{{\color{black}\thmname{#1}~\thmnumber{#2}}\thmnote{\,--\,#3}}
\newtheoremstyle{myproblemstyle}{3pt}{3pt}{\normalfont}{0cm}{\rmfamily\bfseries}{}{1em}{{\color{black}\thmname{#1}~\thmnumber{#2}}\thmnote{\,--\,#3}}
\theoremstyle{mytheoremstyle}
\newmdtheoremenv[linewidth=1pt,backgroundcolor=shallowGreen,linecolor=deepGreen,leftmargin=0pt,innerleftmargin=20pt,innerrightmargin=20pt,]{theorem}{Theorem}[section]
\theoremstyle{mytheoremstyle}
\newmdtheoremenv[linewidth=1pt,backgroundcolor=shallowBlue,linecolor=deepBlue,leftmargin=0pt,innerleftmargin=20pt,innerrightmargin=20pt,]{definition}{Definition}[section]
\theoremstyle{myproblemstyle}
\newmdtheoremenv[linecolor=black,leftmargin=0pt,innerleftmargin=10pt,innerrightmargin=10pt,]{problem}{Problem}[section]
\pgfplotsset{width=8cm,compat=1.9}
\title{\LARGE Energy stable and maximum bound principle preserving schemes for the Allen-Cahn equation based on the Saul'yev methods}
\author[a]{Xuelong Gu}
\author[a]{Yushun Wang}
\author[a]{Wenjun Cai \thanks{Corresponding author: caiwenjun@njnu.edu.cn}}
\affil[a]{Ministry of Education Key Laboratory for NSLSCS \\ Jiangsu Collaborative Innovation Center of Biomedical Functional Materials \\ School of Mathematical Sciences \\ 
Nanjing Normal University, Nanjing, 210023, China.  \\ \vspace{-2cm} }
\date{}
\begin{document}
\maketitle
{\noindent}	 \rule[-10pt]{15.5cm}{0.1em}
\begin{abstract}
The energy dissipation law and maximum bound principle are significant characteristics of the Allen-Chan equation. To preserve discrete counterpart of these properties, the linear part of the target system is usually discretized implicitly, resulting in a large linear or nonlinear system of equations. The Fast Fourier Transform (FFT) algorithm is commonly used to solve the resulting linear or nonlinear systems with computational costs of $\mathcal{O}(M^d log M)$ at each time step, where $M$ is the number of spatial grid points in each direction, and $d$ is the dimension of the problem. Combining the Saul'yev methods and the stabilized technique, we propose and analyze novel first- and second-order numerical schemes for the Allen-Cahn equation in this paper. In contrast to the
traditional methods, the proposed methods can be solved by components, requiring only $\mathcal{O}(M^d)$ computational costs per time step. Additionally, they preserve the maximum bound principle and original energy dissipation law at the discrete level. We also propose rigorous analysis of their consistency and convergence. Numerical experiments are conducted to confirm the theoretical analysis and demonstrate the efficiency of the proposed methods.
\end{abstract}
\begin{small}
	\begin{keywords}
		\centering
		Energy stable, Maximum bound principle, Saul'yev methods, Allen-Cahn equation.
	\end{keywords}
\end{small}
{\noindent}	 \rule[-10pt]{15.5cm}{0.1em}
\section{Introduction}
In this paper, we consider the numerical solution of the Allen-Cahn (AC) equation given by
\begin{equation}\label{ac}
	\begin{aligned}
		 & u_t = \varepsilon^2 \Delta u + f(u), \ \bm{x} \in \Omega, \ t\in(0, T], \\
		 & u(\bm{x}, 0) = u_0(\bm{x}), \ \bm{x} \in \Omega,
	\end{aligned}
\end{equation}
subject to periodic boundary conditions. In \eqref{ac}, $\Omega \subset \mathbb{R}^d$ is a rectangular domain, $u(\bm{x}, t): \overline{\Omega} \times (0, T] \to \mathbb{R}$ is a state variable describing the concentration of a phase in a two-components alloy, $\varepsilon>0$ is a diffusion parameter characterizing the width of the diffuse interface, and $f(u)$ is the nonlinear term. 


When $f$ is taken as the derivative of a bulk potential (i.e., $f = -F^\prime$), \eqref{ac} can be regarded as an $L^2$ gradient flow to the free energy functional
\begin{equation*}
	E(u) = \int_\Omega \Big( \frac{\varepsilon^2}{2} |\nabla u(\bm{x})|^2 + F(u(\bm{x})) \Big) d\bm{x}.
\end{equation*}
There are two categories of commonly-used bulk potentials: the Ginzburg-Landau double-well potential
\begin{equation}\label{F_double_well}
	F^{(POLY)}(u) = \frac{1}{4} (u^2-1)^2,
\end{equation}
which has extreme values of $u = \pm 1$ representing two separate phases, and the Flory-Huggins potential, which has the logarithmic form \cite{002, 003} as
\begin{equation}\label{F_log}
	F^{(LOG)}(u) = \frac{\theta}{2} \big((1+u) {\rm  ln}(1+u) + (1-u){\rm ln}(1-u)\big) - \frac{\theta_c}{2} u^2,
\end{equation}
where $\theta$ and $\theta_c$ are positive constants representing the absolute and critical temperatures, respectively. In physics, the condition $0 < \theta < \theta_c$ is usually imposed to ensure that $F^{(LOG)}$ has a double-well form with two minima of opposite signs. The AC equation has two significant characteristics. First, its solution decreases the free energy along time, i.e., $\frac{dE}{dt} \leq 0$, which is referred to as the energy dissipation law (EDL). Second, if there exists a positive constant $\beta$, such that $f(\beta) \leq 0 \leq f(-\beta)$,  the solution of \eqref{ac} adheres to the maximum bound principle (MBP) \eqref{ac}. That is to say if the supremum norm of the initial condition $u_0(\bm{x})$ is bounded by a positive constant $\beta$, then the supremum norm of its solution is also bounded by $\beta$ for any $t \in (0, T]$ \cite{du_2021}.

Numerical methods that dissipate free energy at a discrete level are referred to as energy stable schemes \cite{eyre_1998}. It has been noted in \cite{dvd_book} that schemes failing to be energy stable may produce unstable or oscillatory solutions. Consequently, various researchers have dedicated themselves to developing energy stable methods in the past few decades. Classical energy stable algorithms include convex splitting methods \cite{convex_splitting1,eyre_1998,csrk}, stabilized semi-implicit methods \cite{cn_ab, tang_imex}, and exponential time difference methods \cite{du_2019, du_2021}. Recently, there have been developments in invariant energy quadratization (IEQ) methods \cite{ieq1, ieq3, ieq4, ieq2} and scalar auxiliary variable (SAV) methods \cite{sav_shen, sav_shen_siam}, which have facilitated the construction of linearly implicit energy stable methods. Extensions of them can be found in \cite{gsav, ieq_gong, esav_high, esav_kg, sav_li, sav_nlsw}. One of the limitations of the SAV and IEQ methods is the preservation of modified energy. To address this, Lagrange multiplier and supplementary variable methods were proposed in \cite{lag, svm}.


The preservation of MBP for \eqref{ac} has also received a growing amount of attention as it can avoid nonphysical solutions, particularly for the Flory-Huggins free energy. Complex values may appear in the numerical solutions if the MBP is not preserved due to logarithm arithmetic. The numerical strategies used to develop the MBP-preserving schemes include the semi-implicit backward difference (BDF) \cite{liao_siam1, tang_imex, xiao_fd} and Crank-Nicolson (CN) methods \cite{hou_cnab, hou_leapfrog, shen_2010}, the exponential time difference methods \cite{du_2019, du_2021}, the integration factor methods \cite{ju_ifrk_high, if_nac, zhang_anm}, and operator splitting methods \cite{li_strang,xiao_split}, and so on. Recently, Ju et al. proposed frameworks for developing MBP-preserving methods based on the SAV approach in \cite{ju_gsav,ju_esav}.


In order to guarantee energy dissipation or DMP of numerical schemes, the linear part of \eqref{ac} was usually discretized implicitly, necessitating the solution of large linear or nonlinear systems at each step. Although they can be implemented effectively using the Fast Fourier Transform (FFT) algorithm, the computational cost at each step is $\mathcal{O}(M^d\log{M})$. To further enhance the efficiency of these methods, we specifically focus on Saul'yev methods,  which are based on central difference methods and were introduced in \cite{saulyev_1} to solve the following 1D diffusion problem
\begin{equation}\label{diffusion}
	u_t = u_{xx}.
\end{equation}
When the boundary conditions are set to zero, a typical Saul'yev method for \eqref{diffusion} is given by
\begin{equation}\label{saulyev_lr}
	\frac{u_i^{n+1} - u_i^n}{\tau} = \frac{u_{i+1}^n - u_i^n - u_i^{n+1} + u_{i-1}^{n+1}}{h^2}, 
\end{equation}
and $u_0^n = u_M^n = 0$. (A detailed description of the above symbols will be provided in the subsequent contexts.) At first glance, \eqref{saulyev_lr} is a linearly implicit scheme. In contrast to the BDF or CN discretization, it can be solved by components in the order of increasing $i$ as
\begin{equation*}
	u_i^{n+1} = \frac{1 - \tau/h^2}{1 + \tau/h^2} u_i^n + \frac{\tau}{1 + \tau/h^2} (u_{i-1}^{n+1} + u_{i+1}^n).
\end{equation*} 
Notice that the CN or BDF schemes produce a unique tridiagonal system when used for temporal discretization, which can be solved using the Thomas method \cite{thomas} at a cost of $\mathcal{O}(M)$ per step, rather than the FFT method with a cost of $\mathcal{O}(M\log{M})$. However, the Saul'yev method  \eqref{saulyev_lr} is still the most efficient among them, and for higher-dimensional problems, the linear system associated with the CN or BDF methods is no longer tridiagonal. The Thomas method can not be extended straightforwardly without significant modifications. Despite the effectiveness of the Saul'yev methods,  it should be noted that, to the best of our knowledge, they have not yet been applied to periodic problems. Additionally, it remains unclear whether Saul'yev methods possess additional structure-preserving properties.

In this paper, we develop novel numerical algorithms for solving the AC equation with periodic boundary conditions, which are based on the Saul'yev methods and are capable of preserving both the energy dissipation law and DMP for \eqref{ac}. Our main contributions can be summarized as follows,
\begin{enumerate}
	\item We extend the original Saul'yev methods to periodic problems, which can still be solved by components.
	
	
	\item For the AC equation, we develop novel first- and second-order schemes based on the Saul'yev methods. All the proposed methods are proved to preserve both original energy dissipation law and MBP of \eqref{ac} at the discrete level. The convergence analysis of the proposed methods are proposed comprehensively.
\end{enumerate}


The rest of this paper is organized as follows. In Section \ref{sec 2}, we briefly introduce the central difference approximation and extend Saul'yev methods to periodic problems. In Section \ref{numerical_medthods}, we propose numerical methods and provide rigorous proofs for their DMP, energy dissipation and solvability. In Section \ref{convergence}, we analyze the convergence of proposed methods by the energy method. Numerical experiments are presented to confirm the theoretical analysis and demonstrate the efficiency of the provided schemes in Section \ref{numerical_experiments}. The last section is concerned with the conclusion.

\section{Decoupled Saul'yev methods for the periodic problems}\label{sec 2}
Without loss of generality, we assume $\Omega = (0, 2\pi)^d$.  Let $M$ be a positive integer. We partition $\Omega$ into a grid with uniform step size of $h = 2\pi/M$, denoted by $\Omega_h = \{ \bm{x}_{\bm{i}} = h \bm{i}| \bm{i} \in \{0,1,\cdots,M-1\}^d \}$. The space of grid functions is denoted $\mathbb{M}_h = \{ v | v = \{ v_{\bm{i}} | v_{\bm{i}} = v(\bm{x}_{\bm{i}}), \bm{x}_{\bm{i}} \in \Omega_h \} \}$.
We denote by $\varDelta_h: \mathbb{M}_h \to \mathbb{M}_h$ the discrete contour part of the Laplacian operator. Specifically, for $d = 1,2$, we have
\begin{equation*}
	\varDelta_h v = 
	\left\lbrace
	\begin{aligned}
		&D_h v, \quad  & d = 1, \\ 
		&D_h v + v D^{\mathrm{T}}_h, \quad  & d = 2, 
	\end{aligned} 
	\right.
\end{equation*}
where $v \in \mathbb{M}_h$, $D_h$ is the differential matrix as shown below:
\begin{equation*}
	D_h =
	\frac{1}{h^2}
	\left(
	\begin{smallmatrix}
			-2 & 1 & \cdots & 0 & 1 \\
			1 & -2 & \cdots & 0 & 0 \\
			\vdots & \vdots &  & \vdots & \vdots \\
			0 & 0  & \cdots & -2 & 1 \\
			1 & 0  & \cdots &  1 & -2
		\end{smallmatrix}\right).
\end{equation*}

	The time grid is uniformly partitioned with a step size of $\tau > 0$, we denote by $t_n = n\tau$ and $N_t = \Big[ \frac{T}{\tau} \Big]$.  Given a time grid function $u^n$, we define
	\begin{equation*}
		\delta_t u^{n + \frac{1}{2}} = \frac{u^{n+1} - u^n}{\tau}, \quad u^{n+\frac{1}{2}} = \frac{u^n + u^{n+1}}{2}.
	\end{equation*}

We then extend spatially decoupled Saul'yev methods to periodic problems using the 1D problem \eqref{diffusion} for illustration. The space-discrete problem of \eqref{diffusion} is to find $u_h \in \mathbb{M}_h$, such that
\begin{equation}\label{semi_1d}
	\frac{d u_h}{dt} = D_h u_h.
\end{equation}
We introduce a decomposition $D_h = D_a + D_b$, where
\begin{equation*}
	D_a =
	\frac{1}{h^2}
	\left(
	\begin{smallmatrix}
			-1 & 1 & \cdots & 0 & 1 \\
			0 & -1 & \cdots & 0 & 0 \\
			\vdots & \vdots &  & \vdots & \vdots \\
			0 & 0  & \cdots & -1 & 1 \\
			0 & 0  & \cdots &  0 & -1
		\end{smallmatrix}\right), \quad
	D_b =
	\frac{1}{h^2}
	\left(
	\begin{smallmatrix}
			-1 & 0 & \cdots & 0 & 0 \\
			1 & -1 & \cdots & 0 & 0 \\
			\vdots & \vdots &  & \vdots & \vdots \\
			0 & 0  & \cdots & -1 & 0 \\
			1 & 0  & \cdots &  1 & -1
		\end{smallmatrix}\right),
\end{equation*}
and recast \eqref{semi_1d} into
\begin{equation}\label{lin_split}
	\frac{d u_h}{dt} = D_a u_h + D_b u_h.
\end{equation}
Discretizing one term on the right-hand side of \eqref{lin_split} explicitly and the other implicitly results in the following fully discrete scheme
\begin{equation}\label{1d_matrix}
    \delta_t u^{n+\frac{1}{2}} = D_a u^n + D_b u^{n+1},
\end{equation}
which can be expressed componentwise as
\begin{equation}\label{1d_point}
    \delta_t u_i^{n+\frac{1}{2}} = \frac{u_{i+1}^{n+\delta_{iM-1}} - u_i^n - u_i^{n+1} + u_{i-1}^{n+1 - \delta_{i0}}}{h^2}.
\end{equation}
Here, $\delta_{ij}$ denotes the Kronecker symbol.

\begin{rmk}
A naive approach to extend \eqref{saulyev_lr} to periodic problems is to directly enforce the boundary condition $u_M^n=u_0^n$. However, this results in a scheme that cannot be solved component-wise. In contrast, \eqref{1d_point} makes slight modifications to the temporal discretization at boundary points in addition to enforcing the boundary conditions, enabling the extended scheme to be solved by components.
\end{rmk}
\begin{rmk}
	In the classical CN and BDF methods, the right-hand side of \eqref{semi_1d} is discretized at the same time level, specifically at $t_{n+1/2}$ and $t_{n+1}$, respectively. Conversely, the presented Saul'yev methods only implicitly discretize the lower triangular part of \eqref{semi_1d}, meaning they only require the solution of triangular systems instead of tridiagonal systems at each step. This fundamental difference allows the proposed Saul'yev methods to be solved in components.
\end{rmk}
	While applying to the AC equation, the proposed scheme has numerous other satisfactory advantages.
	It preserves both original discrete EDL (DEDL) and DMP, which will be demonstrated in detail in the subsequent contexts.
\section{Fully discrete Saul'yev methods for the AC equation}\label{numerical_medthods}
\subsection{Space-discrete problem}\label{space_stabilization}

Without loss of generality, we investigate the 2D AC equation with periodic boundary conditions in the domain $\Omega = (0, 2\pi)^2$. Extensions to 1D, 3D problems are straightforward.
 We introduce some useful notations.
Given $u, v \in \mathbb{M}_h$, we define the discrete $L^2$ inner product, discrete $L^2$ and $L^\infty$ norms as follows,
\begin{equation*}
	\langle v, w \rangle = h^2 \sum\limits_{i = 0}^{M-1}\sum\limits_{j = 0}^{M-1}  v_{ij}w_{ij}, \quad \ \|v\| = \sqrt{\langle v, v \rangle}, \quad \|v\|_\infty = \max\limits_{0 \leq i \leq M-1 \atop 0 \leq j \leq M-1} |v_{ij}|.
\end{equation*}
Additionally, for $\bm{v} = (v^1, v^2), \bm{w} = (w^1, w^2) \in \mathbb{M}_h \times \mathbb{M}_h$, we define
\begin{equation*}
	\langle \bm{v}, \bm{w} \rangle = \langle v^1, w^1 \rangle + \langle v^2, w^2 \rangle, \quad \|\bm{v}\| = \sqrt{\langle \bm{v}, \bm{v} \rangle}.
\end{equation*}
We define the discrete gradient operator
\begin{equation*}
	\nabla_h v_{ij} = \left( \frac{v_{i+1 j} - v_{ij}}{h}, \frac{v_{ij+1} - v_{ij}}{h}\right)^\mathrm{T}, \quad 0 \leq i,j \leq M-1.
\end{equation*}
By employing periodic boundary conditions, it is readily to confirm the following summation-by-parts formula
\begin{equation} \label{summation_by_parts}
	\langle v, \varDelta_h w \rangle = - \langle \nabla_h v, \nabla_h w \rangle = \langle \varDelta_h v, w \rangle.
\end{equation}
In this paper, we specify $u_e(x, y, t)$ be the exact solution of \eqref{ac}, the approximation of which at $(x_j, y_k, t_n)$ is denoted $u_{jk}^n$.

Since we aim at developing energy stable and DMP preserving methods, it is necessary to make several assumptions about $f(\cdot)$.
\begin{ass}[\cite{du_2021}]
	Assume that $f(\cdot)$ is continuously differentiable in the finite domain $[-\beta, \beta]$ such that the supremum norm of its derivative $\|f^\prime\|_{C[-\beta, \beta]} = \max\limits_{\xi \in [-\beta, \beta]} |f^\prime(\xi)|$ is bounded.
\end{ass}
\begin{ass}[\cite{du_2021}]\label{sign_rule}
	Suppose there exists a constant $\beta > 0$, such that
	\begin{equation}
		f(\beta) \leq 0 \leq	f(-\beta).
	\end{equation}
\end{ass}
\begin{rmk}
	We remark here that the nonlinear functions corresponding to the bulk energy \eqref{F_double_well} and \eqref{F_log} satisfy Assumption \ref{sign_rule}. Specifically, the nonlinear function related to the double-well potential is 
	\begin{equation}\label{f_double_well}
		f^{(POLY)}(u) = u - u^3,
	\end{equation}
	which fulfills Assumption \ref{sign_rule} in the sense that $f^{(POLY)}(1) = 0 = f^{(POLY)}(-1)$.

	Likewise, the nonlinear function associated with the Flory-Huggins potential can be written as
	\begin{equation}\label{f_log}
		f^{(LOG)}(u) = \frac{\theta}{2} {\rm ln} \frac{1-u}{1+u} + \theta_c u.
	\end{equation}
	Let $\rho \in (\sqrt{1 - \frac{\theta}{2\theta_c - \theta}}, 1)$ be the positive root of $f^{(LOG)}(\xi) = 0$. It can be confirmed that Assumption \ref{sign_rule} holds since $f^{(LOG)}(-\rho) = 0 = f^{(LOG)}(\rho)$.
\end{rmk}

We then introduce the space-discrete problem for \eqref{ac} and then briefly review the stabilized technique, which was applied to construct energy stable or DMP preserving methods in \cite{du_2019,du_2021,ju_esav,shen_2010,tang_imex}.

The space-discrete problem for \eqref{ac} is to find $u_h(t) \in \mathbb{M}_h$, such that
\begin{equation}\label{ac_semi}
	\frac{d u_h(t)}{dt} = \varepsilon^2 \varDelta_h u_h(t) + f(u_h(t)).
\end{equation}
By taking the discrete inner-product on both sides of \eqref{ac_semi} with $- \frac{d u_h(t)}{dt}$, it can be shown that \eqref{ac_semi} satisfies the following DEDL:
\begin{equation*}\label{dissipation_semi}
	\frac{d}{dt} E_h(u_h(t)) = - \Big\| \frac{d u_h(t)}{dt} \Big\|^2 \leq 0,
\end{equation*}
where $E_h(\cdot)$ is the discrete energy functional defined as
\begin{equation*}
	E_h(v) = \frac{\varepsilon^2}{2} \|\nabla_h v\|^2 + \langle F(v), 1 \rangle, \quad \forall v \in \mathbb{M}_h.
\end{equation*}
\eqref{ac_semi} additionally preserves the DMP, i.e., $\|u_h(0)\|_\infty \leq \beta$ implies $\|u_h(t)\|_\infty \leq \beta$ for any $t \in (0, T]$. A detailed derivation can be found in \cite{mbp_fd}.



\subsection{Time stepping methods}

To construct energy stable or DMP preserving schemes, a stabilized parameter is introduced to rewrite \eqref{ac_semi} as
\begin{equation}\label{semi_stable}
	\dfrac{d u_h(t)}{dt} = \varepsilon^2 \varDelta_h u_h(t) - \kappa  u_h(t)  + \kappa u_h(t) + f(u_h(t)).
\end{equation}
We denote by
\begin{equation*}
	\varDelta_a u = D_a u + u D_a^\mathrm{T}, \quad \varDelta_b u = D_b u + u D_b^\mathrm{T}.
\end{equation*}
Employing the Saul'yev methods to \eqref{semi_stable}, the fully discrete first-order explicit stabilized Saul'yev (\textbf{ESS1}) method for \eqref{ac} is as follows,
\begin{equation}\label{ess1}
	\delta_t u^{n+\frac{1}{2}} = \varepsilon^2 (\varDelta_a u^{n} + \varDelta_b u^{n+1} ) + f(u^n ) - \kappa (u^{n+1} - u^{n}).
\end{equation}
It can be observed from the definitions of $\varDelta_a$ and $\varDelta_b$ that \eqref{ess1} admits the following pointwise implementation
\begin{equation}\label{ess1_form2}
	\begin{aligned}
		u_{ij}^{n+1} & = \frac{1 +  \tau (\kappa - 2 r)}{1 + \tau (\kappa + 2  r)} u_{ij}^n +  \frac{\tau}{1 + \tau (\kappa + 2 r)} f(u_{ij}^n)                                                                 \\
		             & + \frac{r}{1 + \tau (\kappa + 2 r)} (u_{i-1 j}^{n+1-\delta_{i0}} + u_{i+1 j}^{n+\delta_{iM-1}} + u_{i j-1}^{n+1-\delta_{j0}} + u_{i j+1}^{n+\delta_{jM-1}} ), \quad  0\leq i,j \leq M-1, \\
	\end{aligned}
\end{equation}
where $r = \varepsilon^2 / h^2$. Notably, the \textbf{ESS1} method computes the solution in the direction of increasing $i,j$, which is illustrated in the left of Figure \ref{computation_order}(a) for clarity.

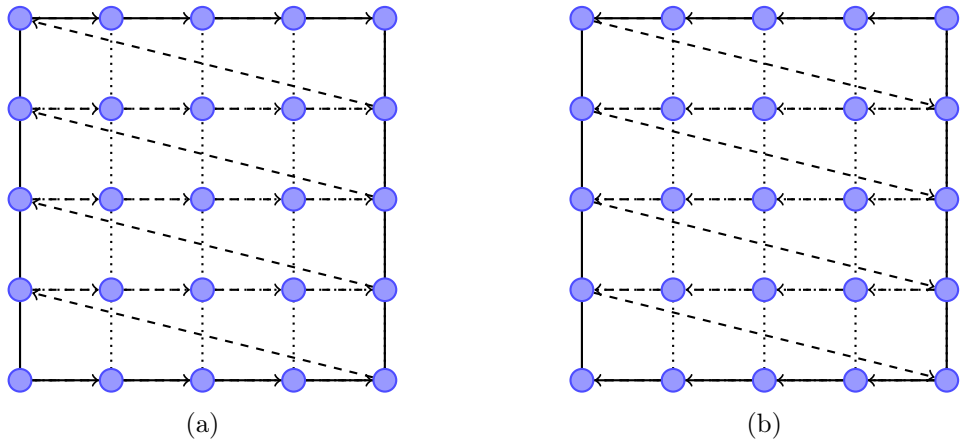
\begin{figure}[H]
	\centering
	\begin{minipage}{0.3\textwidth}
		
		\begin{tikzpicture}[scale=0.6,inner sep=0pt,minimum size=3mm,thick]
			
			\draw[solid] (0,0) -- (8,0)--(8,-8)--(0,-8)--cycle;
			
			\foreach \x in {-8,-6,-4,-2}
			\draw[dotted] (0,\x)--(8,\x);
			
			\foreach \x in {2,4,6,8}
			\draw[dotted] (\x,0)--(\x,-8);
			
			\foreach \x in {1,2,3,4,5}
			\foreach \y in {1,2,3,4,5}
			\node(\x\y) at (2*\y-2,2*\x-10) [circle,draw=blue!70,fill=blue!40] {};
			
			\path[->,dashed] (11) edge (12); 
			\path[->,dashed] (12) edge (13); 
			\path[->,dashed] (13) edge (14);
			\path[->,dashed] (14) edge (15); 
			
			\path[->,dashed] (15) edge (21); 
			\path[->,dashed] (21) edge (22); 
			\path[->,dashed] (22) edge (23); 
			\path[->,dashed] (23) edge (24); 
			\path[->,dashed] (24) edge (25); 
			
			\path[->,dashed] (25) edge (31); 
			\path[->,dashed] (31) edge (32); 
			\path[->,dashed] (32) edge (33); 
			\path[->,dashed] (33) edge (34); 
			\path[->,dashed] (34) edge (35); 
			
			\path[->,dashed] (35) edge (41); 
			\path[->,dashed] (41) edge (42); 
			\path[->,dashed] (42) edge (43); 
			\path[->,dashed] (43) edge (44); 
			\path[->,dashed] (44) edge (45); 
			
			\path[->,dashed] (45) edge (51); 
			\path[->,dashed] (51) edge (52); 
			\path[->,dashed] (52) edge (53); 
			\path[->,dashed] (53) edge (54); 
			\path[->,dashed] (54) edge (55); 
			
			\node at (4,-9) {(a)};
			
		\end{tikzpicture}
		
	\end{minipage}\hspace{2.5cm}
	\begin{minipage}{0.3\textwidth}
		\centering
		
		\begin{tikzpicture}[scale=0.6,inner sep=0pt,minimum size=3mm,thick]
			
			\draw[solid] (0,0) -- (8,0)--(8,-8)--(0,-8)--cycle;
			
			\foreach \x in {-8,-6,-4,-2}
			\draw[dotted] (0,\x)--(8,\x);
			
			\foreach \x in {2,4,6,8}
			\draw[dotted] (\x,0)--(\x,-8);
			
			\foreach \x in {1,2,3,4,5}
			\foreach \y in {1,2,3,4,5}
			\node(\x\y) at (2*\y-2,2*\x-10) [circle,draw=blue!70,fill=blue!40] {};
			
			\path[->,dashed] (12) edge (11); 
			\path[->,dashed] (13) edge (12); 
			\path[->,dashed] (14) edge (13);
			\path[->,dashed] (15) edge (14); 
			
			\path[->,dashed] (21) edge (15); 
			\path[->,dashed] (22) edge (21); 
			\path[->,dashed] (23) edge (22); 
			\path[->,dashed] (24) edge (23); 
			\path[->,dashed] (25) edge (24); 
			
			\path[->,dashed] (31) edge (25); 
			\path[->,dashed] (32) edge (31); 
			\path[->,dashed] (33) edge (32); 
			\path[->,dashed] (34) edge (33); 
			\path[->,dashed] (35) edge (34); 
			
			\path[->,dashed] (41) edge (35); 
			\path[->,dashed] (42) edge (41); 
			\path[->,dashed] (43) edge (42); 
			\path[->,dashed] (44) edge (43); 
			\path[->,dashed] (45) edge (44); 
			
			\path[->,dashed] (51) edge (45); 
			\path[->,dashed] (52) edge (51); 
			\path[->,dashed] (53) edge (52); 
			\path[->,dashed] (54) edge (53); 
			\path[->,dashed] (55) edge (54); 
			
			\node at (4,-9) {(b)};
			
		\end{tikzpicture}
	\end{minipage}

	\caption{Computation order of the \textbf{ESS1} (a) and \textbf{ESS1-adjoint} (b) methods.}
	\label{computation_order}
\end{figure}

%
%

In practical implementation, second-order schemes are preferable. It is effective to extend the \textbf{ESS1} method to second-order by composing it with its adjoint method, as described in \cite{hairer_book}. Specifically, if we denote the propagator of \eqref{ess1} by


\begin{equation}\label{evolution_ess1}
	u^{n+1} = \mathcal{S}(\tau) u^n.
\end{equation}
Its adjoint method, denoted by $u^{n+1} = \widetilde{\mathcal{S}}(\tau)u^n = \mathcal{S}^{-1}(-\tau)u^n$, can be obtained by exchanging $\tau \leftrightarrow -\tau$ and $n \leftrightarrow n+1$ in \eqref{ess1}. The resulting \textbf{ESS1-adjoint} method is
\begin{equation}\label{ess1_adjoint}
	\delta_t u^{n+\frac{1}{2}} = \varepsilon^2 (\varDelta_a u^{n+1} + \varDelta_b u^n) + f(u^{n+1}) + \kappa(u^{n+1} - u^n).
\end{equation}
Similarly, \eqref{ess1_adjoint} is also compatible with a pointwise reformulation, that is
\begin{equation}\label{ess1_adjoint_form2}
	\begin{aligned}
		 & \left(-1 + \tau(\kappa - 2r) \right)u_{ij}^{n+1} + \tau f(u_{ij}^{n+1}) + (1 - \tau (\kappa + 2r) )u_{ij}^n                                                             \\
		 & \quad + \tau r( u_{i-1 j}^{n+1-\delta_{i0}} + u_{i+1 j}^{n+\delta_{iM-1}} + u_{i j-1}^{n+1-\delta_{j0}} + u_{i j+1}^{n+\delta_{jM-1}}  ) = 0,	\quad 0 \leq i,j \leq M-1.
	\end{aligned}
\end{equation}
In contrast to the \textbf{ESS1} method, the \textbf{ESS1-adjoint} method needs to be solved in the direction of decreasing $i, j$. The computational order is also displayed in Figure \ref{computation_order}(b). 
 
It should be noted that although \eqref{ess1_adjoint_form2} is a fully implicit scheme, it can still be efficiently solved using the Newton's method, as it involves only scalar nonlinear equations. Our numerical experiments have shown that the efficiency of \textbf{ESS1-adjoint} method are   comparable to many other existing methods, despite being fully implicit.

The second-order \textbf{SS2} method is then obtained by composing $\mathcal{S}(\tau)$ and $\widetilde{\mathcal{S}}(\tau)$ as
\begin{equation}\label{ss2}
	u^{n+1} = \widetilde{\mathcal{S}}(\tau/2) \mathcal{S}(\tau/2) u^n.
\end{equation}
Analogously, the (\textbf{SS2-adjoint}) method is
\begin{equation}
	u^{n+1} = \mathcal{S}(\tau/2) \widetilde{\mathcal{S}}(\tau/2) u^n.
\end{equation}
It is worth mentioning that higher-order schemes can also be obtained by incrementing the composition stages, and readers are referred to \cite{hairer_book} for details. This paper only focus on the first- and second-order methods.

\subsection{DMP of the proposed methods}
\begin{lem}\label{bound_nonlinear}
    Let $\phi(\xi)$ be defined as
	\begin{equation*}
		\phi(\xi) = \frac{1 + \tau (\kappa - 2r)}{1 + \tau(\kappa + 2r)} \xi + \frac{\tau}{1 + \tau(\kappa + 2r)} f(\xi), \quad \xi \in [-\beta, \beta].
	\end{equation*}
	Under the conditions $\kappa \geq \|f^\prime\|_{C[-\beta, \beta]}$ and $0 < \tau  \leq \frac{h^2}{2\varepsilon^2}$, it holds that
	\begin{equation*}
		\|\phi\|_{C[-\beta, \beta]} \leq  \beta.
	\end{equation*}
\end{lem}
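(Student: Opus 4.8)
The plan is to show that $\phi$ is monotonically nondecreasing on $[-\beta,\beta]$ and then control it by its values at the two endpoints, invoking Assumption \ref{sign_rule} there. First I would differentiate: since the denominator $1+\tau(\kappa+2r)$ is a positive constant,
\begin{equation*}
	\phi^\prime(\xi) = \frac{1 + \tau(\kappa - 2r) + \tau f^\prime(\xi)}{1+\tau(\kappa+2r)} = \frac{(1 - 2r\tau) + \tau\big(\kappa + f^\prime(\xi)\big)}{1+\tau(\kappa+2r)}.
\end{equation*}
The condition $0 < \tau \le h^2/(2\varepsilon^2)$ is exactly $2r\tau \le 1$, so $1 - 2r\tau \ge 0$; the condition $\kappa \ge \|f^\prime\|_{C[-\beta,\beta]}$ gives $\kappa + f^\prime(\xi) \ge 0$ for every $\xi \in [-\beta,\beta]$. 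Hence the numerator is nonnegative, $\phi^\prime \ge 0$ on $[-\beta,\beta]$, and $\phi$ is nondecreasing there.

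Consequently $\phi(-\beta) \le \phi(\xi) \le \phi(\beta)$ for all $\xi \in [-\beta,\beta]$, so it suffices to bound the endpoints. For the upper end, observe $\frac{1+\tau(\kappa-2r)}{1+\tau(\kappa+2r)} = 1 - \frac{4r\tau}{1+\tau(\kappa+2r)} \le 1$ because $r,\tau>0$; combined with $f(\beta)\le 0$ from Assumption \ref{sign_rule} and $\beta,\tau>0$, this yields
\begin{equation*}
	\phi(\beta) = \frac{1+\tau(\kappa-2r)}{1+\tau(\kappa+2r)}\beta + \frac{\tau}{1+\tau(\kappa+2r)}f(\beta) \le \beta.
\end{equation*}
Symmetrically, using $f(-\beta)\ge 0$ and again $\frac{1+\tau(\kappa-2r)}{1+\tau(\kappa+2r)}\le 1$,
\begin{equation*}
	\phi(-\beta) = -\frac{1+\tau(\kappa-2r)}{1+\tau(\kappa+2r)}\beta + \frac{\tau}{1+\tau(\kappa+2r)}f(-\beta) \ge -\beta.
\end{equation*}
Chaining the inequalities gives $-\beta \le \phi(\xi) \le \beta$ on $[-\beta,\beta]$, i.e. $\|\phi\|_{C[-\beta,\beta]}\le\beta$.

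There is no real obstacle here; the only point requiring a small amount of care is checking that the stabilization hypothesis and the CFL-type time-step restriction together force the numerator of $\phi^\prime$ to be nonnegative (in particular, that one does \emph{not} need $1+\tau(\kappa-2r)\ge 0$ separately — the endpoint estimates only use that this quotient is $\le 1$, which is automatic). Everything else is a one-line monotonicity argument plus substitution of the sign condition on $f$ at $\pm\beta$.
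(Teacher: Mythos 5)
Your proof is correct and follows essentially the same route as the paper: differentiate to get monotonicity from $2r\tau\le 1$ and $\kappa\ge\|f^\prime\|_{C[-\beta,\beta]}$, then bound $\phi$ by its values at $\pm\beta$ using the sign condition $f(\beta)\le 0\le f(-\beta)$. You merely spell out the endpoint estimates that the paper leaves implicit, which is fine.
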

\begin{proof}
	Differentiating both sides of $\phi(\xi)$ gives
	\begin{equation*}
		\phi^\prime(\xi) = \frac{1 + \tau (\kappa + f^\prime(\xi) - 2r)}{1 + \tau (\kappa + 2r)} \geq 0,
	\end{equation*}
	which implies that $\phi(\xi)$ is monotonically increasing on $[-\beta, \beta]$. Assumption \eqref{sign_rule} then produces
	\begin{equation*}
		\|\phi\|_{C[-\beta, \beta]} = \max \{|\phi(-\beta)|, |\phi(\beta)| \} \leq \beta.
	\end{equation*}
	The proof is thus completed.
\end{proof}
\begin{thm}[DMP of \textbf{ESS1}]\label{mbp_ess1}
	Assume that the initial condition $u^0$ satisfies $\|u^0\|_\infty \leq \beta$. Then \textbf{ESS1} method preserves the DMP in the sense
	\begin{equation*}
		\|u^n\|_\infty \leq \beta, \quad \forall \ 0 \leq n \leq N_t.
	\end{equation*}
	 Provided that the stabilized parameter $\kappa \geq \|f^\prime\|_{C[-\beta, \beta]}$, and the time step $0 < \tau \leq \frac{h^2}{2 \varepsilon^2}$.
\end{thm}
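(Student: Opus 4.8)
The plan is to prove the bound by a \emph{double induction}: an outer induction on the time index $n$, and within each time step an inner induction that follows the Saul'yev sweep order (increasing $i$, then increasing $j$, as depicted in Figure \ref{computation_order}(a)). The base case $n=0$ is precisely the hypothesis $\|u^0\|_\infty \le \beta$, so I assume $\|u^n\|_\infty \le \beta$ and aim to show $\|u^{n+1}\|_\infty \le \beta$. The starting observation is the algebraic structure of the pointwise update \eqref{ess1_form2}: writing $a = \frac{1+\tau(\kappa-2r)}{1+\tau(\kappa+2r)}$, the time-step restriction $0<\tau\le h^2/(2\varepsilon^2)$ (i.e. $2\tau r\le 1$) forces $a\ge 0$, and one checks that $a$ together with the four (equal) coefficients of the neighbour terms sums to $1$. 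Thus \eqref{ess1_form2} has the form $u^{n+1}_{ij} = \phi(u^n_{ij}) + (\text{neighbour block})$, where $\phi$ is exactly the function of Lemma \ref{bound_nonlinear} and the neighbour block is a positively weighted sum of four grid values with total weight $1-a$.

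The key point enabling the inner induction is to identify which grid values actually enter the formula for $u^{n+1}_{ij}$. Because of the Kronecker-delta time shifts in \eqref{ess1_form2}, each neighbour value is either an old value $u^n_{\bullet}$ or a new value $u^{n+1}_{\bullet}$ that has already been computed earlier in the sweep: for $0<i$, the west neighbour is the new value $u^{n+1}_{i-1,j}$, already known; for $i=M-1$ the east neighbour is $u^{n+1}_{M,j}=u^{n+1}_{0,j}$, which belongs to row $0$ and is therefore computed before row $M-1$; for $i=0$ the west neighbour reverts to the old value $u^n_{M-1,j}$; and symmetrically in $j$ (the north neighbour at $j=M-1$ is $u^{n+1}_{i,0}$, computed before column $M-1$). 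Consequently, by the outer induction hypothesis on the $u^n$-values and the inner induction hypothesis on the already-computed $u^{n+1}$-values, all four neighbour values lie in $[-\beta,\beta]$, so the neighbour block is bounded in absolute value by $(1-a)\beta$.

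It then remains to combine the two contributions. By Lemma \ref{bound_nonlinear} — more precisely by its proof, which shows $\phi$ is non-decreasing on $[-\beta,\beta]$ — together with Assumption \ref{sign_rule}, one gets $\phi(u^n_{ij}) \le \phi(\beta) = a\beta + \frac{\tau}{1+\tau(\kappa+2r)}f(\beta) \le a\beta$ and, symmetrically, $\phi(u^n_{ij}) \ge \phi(-\beta) \ge -a\beta$. Adding the estimate $|\text{neighbour block}|\le (1-a)\beta$ yields $|u^{n+1}_{ij}| \le a\beta + (1-a)\beta = \beta$, which advances the inner induction; letting $(i,j)$ range over the whole grid gives $\|u^{n+1}\|_\infty \le \beta$ and closes the outer induction. (No separate solvability argument is needed here, since \eqref{ess1_form2} expresses $u^{n+1}_{ij}$ explicitly in terms of quantities already known at that stage of the sweep.)

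I expect the principal difficulty to be essentially bookkeeping rather than analysis: one must verify carefully that the Saul'yev sweep is self-consistent, i.e. that every new-time neighbour referenced in \eqref{ess1_form2} genuinely precedes $(i,j)$ in the computation order — including the wrap-around cases where periodicity interacts with the Kronecker-delta time shift — and one must confirm the coefficient identity $a + (\text{four neighbour weights}) = 1$ with $a\ge 0$, since it is exactly this convex-combination structure (valid under the stated CFL-type bound $\tau\le h^2/(2\varepsilon^2)$) that prevents the neighbour contributions from overshooting the bound. The genuinely analytic input — monotonicity of $\phi$ and the sign condition $f(\beta)\le 0\le f(-\beta)$ — is light and is already isolated in Lemma \ref{bound_nonlinear} and Assumption \ref{sign_rule}.
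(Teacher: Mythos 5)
Your proposal is correct and follows essentially the same route as the paper's own proof: an outer induction on the time level combined with an inner induction along the Saul'yev sweep, using the convex-combination structure of \eqref{ess1_form2} under the restriction $\tau\le h^2/(2\varepsilon^2)$ together with Lemma \ref{bound_nonlinear} and Assumption \ref{sign_rule}. If anything, your write-up is more explicit than the paper's about the periodic wrap-around bookkeeping (which new-time neighbours are already available in the sweep) and the coefficient identity $a+4\,\tau r/(1+\tau(\kappa+2r))=1$, which the paper uses implicitly.
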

\begin{proof}
    We will use an inductive argument to demonstrate the proof. By the hypotheses of Theorem \ref{mbp_ess1}, the conclusion is true for $n=0$. Assuming that it holds for $n=l$, we will show that it is also true for $n=l+1$ by utilizing an alternative inductive argument along the ascending order of indices $i$ and $j$. Combining the induction with Lemma \ref{bound_nonlinear}, we obtain from \eqref{ess1_form2} that
	\begin{equation*}
		\begin{aligned}
			|u_{0,0}^{l+1}| & \leq \frac{1 + \tau (\kappa - 2r)}{1 + \tau (\kappa + 2r)} \beta + \frac{r}{1 + \tau (\kappa + 2r)} (|u_{M-1, 0}^l| + |u_{1,0}^l| + |u_{0, M-1}^l| + |u_{1, 0}^{l}|) \\
			               & \leq \frac{1 + \tau (\kappa - 2r)}{1 + \tau (\kappa + 2r)} \beta + \frac{4r}{1 + \tau (\kappa + 2r)} \beta \leq \beta.
		\end{aligned}
	\end{equation*}
	Assuming $|u_{i-1j}^{l+1}| \leq \beta$ and $|u_{ij-1}^{l+1}| \leq \beta$ for $0\leq i,j \leq M-1$, a comparable process gives:
	\begin{equation*}
		|u_{ij}^{l+1}| \leq \frac{1 + \tau (\kappa - 2r)}{1 + \tau (\kappa + 2r)} \beta + \frac{4r}{1 + \tau (\kappa + 2r)} \beta \leq \beta.
	\end{equation*}
	Therefore, $|u_{ij}^{l+1}| \leq \beta$ for all $0 < i,j \leq M-1$, and the result is also true for $n = l+1$. This completes the proof.
\end{proof}
To obtain the solvability and DMP of the \textbf{ESS1-adjoint} method, it is necessary to make an additional Assumption \ref{assumption_strong} to the nonlinear term.
\begin{ass}\label{assumption_strong}
	Suppose that the nonlinear term $f(\cdot)$ is an odd function and there exists a constant $\beta > 0$, such that $f(\beta) = 0$.
\end{ass}
It is worth noting that Assumption \ref{assumption_strong} implies Assumption \ref{sign_rule}. Furthermore, both nonlinear terms $f^{(POLY)}$ and $f^{(LOG)}$   \eqref{f_double_well} and \eqref{f_log} satisfy Assumption \ref{assumption_strong}.
\begin{thm}\label{mbp_ess1_adjoint}
	Assume that the initial condition $u^0$ satisfies $\|u^0\|_\infty \leq \beta$. Then the \textbf{ESS1-adjoint} method \eqref{ess1_adjoint_form2} is uniquely solvable in $[-\beta, \beta]$. Consequently, the solutions generated by \textbf{ESS1-adjoint} satisfy
	\begin{equation*}
		\|u^n\|_\infty \leq \beta, \quad 0 \leq n \leq N_t.
	\end{equation*}
	Provided that the time step $0 < \tau \leq \min\{\frac{h^2}{\kappa h^2 + 2\varepsilon^2},  \frac{1}{\kappa + \|f^\prime\|_{C[-\beta, \beta]} }\}$.
\end{thm}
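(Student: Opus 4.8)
The plan is to combine a double induction with an elementary root-count for a scalar equation. For each grid point $(i,j)$, rewrite the update \eqref{ess1_adjoint_form2} as $G_{ij}(u_{ij}^{n+1})=0$ with
\[
  G_{ij}(\xi) = \big(-1+\tau(\kappa-2r)\big)\xi + \tau f(\xi) + b_{ij},\qquad \xi\in[-\beta,\beta],
\]
where $b_{ij}$ gathers the terms of \eqref{ess1_adjoint_form2} that are already known when this equation is solved, namely $\big(1-\tau(\kappa+2r)\big)u_{ij}^{n}$ plus $\tau r$ times the sum of the four neighbouring values. The outer induction is on $n$: the case $n=0$ is the hypothesis $\|u^{0}\|_\infty\le\beta$; assuming $\|u^{n}\|_\infty\le\beta$, I will show that the componentwise sweep defining $u^{n+1}$ is well posed and produces $\|u^{n+1}\|_\infty\le\beta$.

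Next I would isolate the roles of the two halves of the step restriction. The bound $\tau\le\frac{h^{2}}{\kappa h^{2}+2\varepsilon^{2}}=\frac1{\kappa+2r}$ gives $1-\tau(\kappa+2r)\ge0$, while $\tau\le\frac1{\kappa+\|f'\|_{C[-\beta,\beta]}}$ gives $\tau\big(\kappa+f'(\xi)\big)\le1$ on $[-\beta,\beta]$; each bound also forces $\tau\kappa\le1$, so $1-\tau(\kappa-2r)\ge 2\tau r\ge0$ as well. The second bound yields
\[
  G_{ij}'(\xi) = -1+\tau\big(\kappa+f'(\xi)-2r\big)\le -2\tau r<0,
\]
so $G_{ij}$ is continuous (since $f\in C^{1}([-\beta,\beta])$) and strictly decreasing on $[-\beta,\beta]$; in particular it has at most one root there.

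For existence and for the $\beta$-bound I run the inner induction along the sweep order prescribed for \eqref{ess1_adjoint_form2}: when the sweep reaches $(i,j)$, each of the four neighbouring values folded into $b_{ij}$ is either a level-$n$ value, bounded by $\beta$ by the outer hypothesis, or a level-$(n+1)$ value already computed, bounded by $\beta$ by the inner hypothesis; the periodic wrap-around at the extreme indices does not affect this. Hence, using $1-\tau(\kappa+2r)\ge0$,
\[
  |b_{ij}| \le \big(1-\tau(\kappa+2r)\big)\beta + 4\tau r\,\beta = \big(1-\tau(\kappa-2r)\big)\beta .
\]
Using $f(\beta)=0$ and $f(-\beta)=-f(\beta)=0$ from Assumption~\ref{assumption_strong} (in fact only the sign condition $f(\beta)\le0\le f(-\beta)$ is used), this estimate gives
\[
  G_{ij}(\beta) = -\big(1-\tau(\kappa-2r)\big)\beta + b_{ij}\le 0,\qquad
  G_{ij}(-\beta) = \big(1-\tau(\kappa-2r)\big)\beta + b_{ij}\ge 0 .
\]
By the intermediate value theorem together with strict monotonicity, $G_{ij}$ has exactly one root in $[-\beta,\beta]$; this is the uniquely determined $u_{ij}^{n+1}$, and $|u_{ij}^{n+1}|\le\beta$. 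This advances the inner induction, hence the outer one, and the claim $\|u^{n}\|_\infty\le\beta$ for all $0\le n\le N_{t}$ follows.

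The main difficulty is organisational rather than computational: one must (a) confirm that, in the prescribed sweep direction, every term folded into $b_{ij}$ is genuinely already available and controlled by $\beta$, handling carefully the boundary terms created by periodicity, and (b) keep track of which part of the step restriction does what --- $\tau\le\frac1{\kappa+2r}$ to keep the coefficient $1-\tau(\kappa+2r)$ nonnegative (so the convex-combination-type estimate for $b_{ij}$ is valid) and $\tau\le\frac1{\kappa+\|f'\|_{C[-\beta,\beta]}}$ to make $G_{ij}$ strictly decreasing on $[-\beta,\beta]$ (which supplies uniqueness). It is also worth noting at the outset that for the Flory--Huggins nonlinearity the whole argument lives on $[-\beta,\beta]\subset(-1,1)$, so the maximum-bound property is precisely what keeps every quantity inside the domain of $f$.
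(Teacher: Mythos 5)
Your proposal is correct and follows essentially the same route as the paper: an outer induction on $n$, an inner induction along the componentwise sweep, a sign check of the scalar map at $\pm\beta$ (using the bound $|b_{ij}|\le(1-\tau(\kappa-2r))\beta$, which is exactly the paper's estimate of $|\eta|$) for existence, and the restriction $\tau\le 1/(\kappa+\|f^\prime\|_{C[-\beta,\beta]})$ for uniqueness. The only differences are cosmetic: you obtain uniqueness from strict monotonicity of $G_{ij}$ rather than by subtracting two putative roots, and you note that the sign condition $f(\beta)\le 0\le f(-\beta)$ already suffices where the paper invokes the stronger oddness assumption to factor $\varphi(\beta)\varphi(-\beta)$.
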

\begin{proof}
	We use mathematical induction to prove that \eqref{ess1_adjoint_form2} has a unique solution within $[-\beta,\beta]$. The assumption holds true for $n=0$. Assuming it is true for $n=l$, we prove that it is also true for $n=l+1$ using an additional induction argument in decreasing order of indices $i$ and $j$. Since procedure of getting the boundedness of $|u_{M-1,M-1}^{l+1}|$ will be similar to the subsequent derivations, we will only show that $|u_{i+1,j}^{l+1}| \leq \beta$ and $|u_{i,j+1}^{l+1}|\leq \beta$ imply $|u_{ij}^{l+1}|\leq \beta$. Let us define 
	\begin{equation*}
		\varphi(\xi) = (-1 + \tau (\kappa - 2r)) \xi + \tau f(\xi) + \eta,
	\end{equation*}
	where
	\begin{equation*}
		\eta = (1 - \tau (\kappa + 2r) )u_{ij}^l + \tau r( u_{i-1, j}^{l+1-\delta_{i0}} + u_{i+1, j}^{l+\delta_{iM-1}} + u_{i, j-1}^{l+1-\delta_{j0}} + u_{i, j+1}^{l+\delta_{jM-1}}  ).
	\end{equation*}
	We prove the solvability of $\varphi(\xi) = 0$ by showing that $\varphi(-\beta)$ and $\varphi(\beta)$ have opposite signs. Assumption \ref{assumption_strong} implies that
	\begin{equation}\label{mbp_ess1_adjoint_eq1}
		\varphi(\beta)\varphi(-\beta) = \Big(\eta + (-1 + \tau (\kappa - 2r)) \beta \Big) \Big(  \eta - (-1 + \tau (\kappa - 2r)) \beta \Big).
	\end{equation}
	Calculating the ratio of $|\eta|$ to $\beta|-1 + \tau (\kappa - 2r)|$ yields
	\begin{equation}\label{mbp_ess1_asjoint_eq2}
		\frac{|\eta|}{\beta|-1 + \tau (\kappa - 2r)|} \leq \frac{1 -  \tau\kappa - 2\tau r+4\tau r}{1 - \tau \kappa + 2 \tau r} \leq 1.
	\end{equation}
	Note that the time step restriction implies $1 - \tau \kappa \pm 2\tau r > 0$. Combining \eqref{mbp_ess1_adjoint_eq1} and \eqref{mbp_ess1_asjoint_eq2} gives $\varphi(-\beta)\varphi(\beta) \leq 0$. Therefore, there is at least one solution to \eqref{ess1_adjoint_form2} in $[-\beta, \beta]$ according to the existence theorem of zero points. 
	
	If there exists distinct $\xi_1$ and $\xi_2$ such that
	\begin{equation*}
		\begin{aligned}
			 & (-1 + \tau (\kappa - 2r)) \xi_1 + \tau f(\xi_1) + \eta = 0, \\
			 & (-1 + \tau (\kappa - 2r)) \xi_2 + \tau f(\xi_2) + \eta = 0.
		\end{aligned}
	\end{equation*}
	Making the difference between above two equations, we obtain
	\begin{equation*}
		\left|(1 + 2 r \tau ) (\xi_1 - \xi_2) \right| \leq (\tau \kappa + \tau\|f^\prime\|_{C[-\beta, \beta]}) |\xi_1 - \xi_2|,
	\end{equation*}
	that is
	\begin{equation*}
		\left( 1 + 2r\tau - \tau\kappa - \tau \|f^\prime\|_{C[-\beta, \beta]} \right) |\xi_1 - \xi_2| \leq 0.
	\end{equation*}
	By the condition $\tau \leq \frac{1}{\kappa + \|f^\prime\|_{C[-\beta, \beta]}}$, we can infer that $\xi_1 = \xi_2$. Consequently, the nonlinear system \eqref{mbp_ess1_asjoint_eq2} has a unique solution within $[-\beta, \beta]$. The result holds for $u_{ij}^{l+1} \ \forall \ 0\leq i,j \leq M-1$, and the proof is thus completed.
\end{proof}
\begin{rmk}
	We have demonstrated that the nonlinear system \eqref{ess1_adjoint_form2} has a unique solution within the subset $\{\xi \in \mathbb{R} : |\xi| \leq \beta\}$. In practical implementation, we will employ the Newton's method as a nonlinear solver for \eqref{ess1_adjoint_form2}, with an initial guess of $|\xi_0| \leq \beta$ to ensure the iteration converges to the desired solution.
\end{rmk}
\begin{rmk}\label{nonlinear_double_well}
	When considering the AC equation with double-well potential, the above proof can be simplified. Specifically, the nonlinear system \eqref{ess1_adjoint_form2} can be reduced to the following cubic equation under this circumstance
	\begin{equation}\label{ess1_adjoint_cubic}
		\xi^3 + p \xi + q = 0,
	\end{equation}
	where
	\begin{equation*}
		\begin{aligned}
			p & = \frac{1}{\tau} + \frac{2 \varepsilon^2}{h^2} - \kappa - 1,                                                                                                                                                             \\
			q & = -(\frac{1}{\tau} - \frac{2\varepsilon^2}{h^2} - \kappa)u_{ij}^n - \frac{\varepsilon^2}{h^2} ( u_{i-1, j}^{n+1-\delta_{i0}} + u_{i+1, j}^{n+\delta_{iM-1}} + u_{i, j-1}^{n+1-\delta_{j0}} + u_{i, j+1}^{n+\delta_{jM-1}} ).
		\end{aligned}
	\end{equation*}
	If the times step satisfies $\tau \leq \frac{1}{1+\kappa}$, the unique real solution of \eqref{ess1_adjoint_cubic} can be given by 
	\begin{equation}\label{root_cubic}
		\xi = \sqrt[3]{-\frac{q}{2} + \sqrt{\varDelta}} + \sqrt[3]{-\frac{q}{2} - \sqrt{\varDelta}}.
	\end{equation}
	where $\varDelta = \frac{q^2}{4} + \frac{p^3}{27}$ is the discriminant of \eqref{ess1_adjoint_cubic}, as determined by the Cardano's formula. However, using \eqref{root_cubic} may be more expensive in practical computations, and numerical experiments will compare this method to Newton's method in detail. 
\end{rmk}
\begin{thm}[DMP of \textbf{SS2} method] \label{dmp_ss2}
	Assume that the initial value $u^0$ is bounded in the sense $\|u^0\|_\infty \leq \beta$. Then the solution generated by \textbf{SS2} satisfies
	\begin{equation*}
		\|u^n\|_{\infty} \leq \beta, \quad  0 \leq n \leq N_t.
	\end{equation*}
	Provided that the time step $0 < \tau \leq \min\{ \frac{2h^2}{\kappa h^2 + 2\varepsilon^2}, \frac{2}{\kappa + \|f^\prime\|_{C[-\beta, \beta]}}  \}$.
\end{thm}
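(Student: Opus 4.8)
The plan is to exploit the defining composition \eqref{ss2}: the \textbf{SS2} propagator for a step of size $\tau$ is exactly $\widetilde{\mathcal{S}}(\tau/2)$ applied after $\mathcal{S}(\tau/2)$, i.e. one \textbf{ESS1} substep followed by one \textbf{ESS1-adjoint} substep, each carried out with the halved step $\tau/2$. Since Theorems \ref{mbp_ess1} and \ref{mbp_ess1_adjoint} already show that each of these two building blocks maps the ball $\{v\in\mathbb{M}_h:\|v\|_\infty\le\beta\}$ into itself (the adjoint map being, in addition, uniquely solvable in $[-\beta,\beta]$), the DMP of \textbf{SS2} will follow by composing the two invariance statements and then inducting on $n$. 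The only thing to check with care is that the single step restriction in the statement is strong enough to activate \emph{both} component theorems with step $\tau/2$.

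Concretely, assume $\|u^n\|_\infty\le\beta$. First set $u^{n,*}:=\mathcal{S}(\tau/2)u^n$ and apply Theorem \ref{mbp_ess1} with step $\tau/2$: its hypotheses are $\kappa\ge\|f'\|_{C[-\beta,\beta]}$ (the standing requirement on the stabilization parameter for these stabilized schemes) and $\tau/2\le h^2/(2\varepsilon^2)$. The latter follows from $0<\tau\le 2h^2/(\kappa h^2+2\varepsilon^2)$ together with the elementary inequality $\kappa h^2+2\varepsilon^2\ge 2\varepsilon^2$, which gives $2h^2/(\kappa h^2+2\varepsilon^2)\le h^2/\varepsilon^2$. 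Hence $\|u^{n,*}\|_\infty\le\beta$. Next set $u^{n+1}:=\widetilde{\mathcal{S}}(\tau/2)u^{n,*}$ and apply Theorem \ref{mbp_ess1_adjoint} with step $\tau/2$ and initial datum $u^{n,*}$, which is $\beta$-bounded by the previous line: its hypothesis is $\tau/2\le\min\{h^2/(\kappa h^2+2\varepsilon^2),\,1/(\kappa+\|f'\|_{C[-\beta,\beta]})\}$, i.e. precisely $\tau\le\min\{2h^2/(\kappa h^2+2\varepsilon^2),\,2/(\kappa+\|f'\|_{C[-\beta,\beta]})\}$, which holds by hypothesis. Theorem \ref{mbp_ess1_adjoint} then yields that $u^{n+1}$ is well-defined (uniquely solvable component-wise in $[-\beta,\beta]$) and that $\|u^{n+1}\|_\infty\le\beta$. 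Starting from $\|u^0\|_\infty\le\beta$ and iterating this two-step argument for $n=0,1,\dots,N_t-1$ produces $\|u^n\|_\infty\le\beta$ for all $0\le n\le N_t$.

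I do not anticipate a real obstacle here: the argument is a clean composition-plus-induction, and the whole content is the bookkeeping of the step-size conditions, namely verifying that the prescribed bound on $\tau$ simultaneously implies the \textbf{ESS1} condition $\tau/2\le h^2/(2\varepsilon^2)$ and the strictly more stringent \textbf{ESS1-adjoint} condition on $\tau/2$. Once one notices that the adjoint restriction is the binding one and that halving $\tau$ exactly matches the half-step used in \eqref{ss2}, there is nothing further to do. The identical reasoning, with the roles of $\mathcal{S}$ and $\widetilde{\mathcal{S}}$ interchanged, also delivers the DMP of the \textbf{SS2-adjoint} method under the same step restriction.
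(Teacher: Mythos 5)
Your proposal is correct and is essentially the paper's own argument: the paper likewise proves the result by induction on $n$, writing one \textbf{SS2} step as $u^{n+1}=\widetilde{\mathcal{S}}(\tau/2)\mathcal{S}(\tau/2)u^n$ and invoking Theorems \ref{mbp_ess1} and \ref{mbp_ess1_adjoint} for the two half-steps. Your extra bookkeeping verifying that the stated bound on $\tau$ implies both half-step restrictions (via $2h^2/(\kappa h^2+2\varepsilon^2)\le h^2/\varepsilon^2$) is accurate and simply makes explicit what the paper leaves implicit.
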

\begin{proof}
	We also prove this theorem by mathematical induction. The result is clear true for $n=0$. Suppose that it is true for $n = l$. We define
	\begin{equation*}
		\begin{aligned}
			v^l = \mathcal{S}(\tau / 2) u^l \quad \text{and} \quad u^{l+1} = \widetilde{S}(\tau / 2) \mathcal{S}(\tau / 2) u^l = \widetilde{\mathcal{S}}(\tau / 2) v^l.
		\end{aligned}
	\end{equation*}
	Applying Theorem \ref{mbp_ess1} and \ref{mbp_ess1_adjoint}, we get
	\begin{equation*}
		\begin{aligned}
			\|v^l\|_\infty & = \|\mathcal{S}(\tau / 2) u^l\|_\infty \leq \beta \quad \text{and} \quad
			\|u^{l+1}\|_\infty = \|\widetilde{S}(\tau / 2) v^l\|_\infty \leq \beta.
		\end{aligned}
	\end{equation*}
	The result is also true $n=l+1$, and thus we have completed the proof of Theorem \ref{dmp_ss2}.
\end{proof}
It is worth mentioning that the DMP of the \textbf{SS2-adjoint} can be obtained through a comparable approach. Thus, we only list the result and omit the detailed proof. 
\begin{thm}[DMP of \textbf{SS2-adjoint} method] \label{dmp_ss2_adj}
	Assume that the initial value $u^0$ is bounded in the sense $\|u^0\|_\infty \leq \beta$. Then the solution generated by \textbf{SS2-adjoint} satisfies
	\begin{equation*}
		\|u^n\|_{\infty} \leq \beta, \quad  \ 0 \leq n \leq N_t.
	\end{equation*}
	Provided that the time step $0 < \tau \leq \min\{ \frac{2h^2}{\kappa h^2 + 2\varepsilon^2}, \frac{2}{\kappa + \|f^\prime\|_{C[-\beta, \beta]}}  \}$.
\end{thm}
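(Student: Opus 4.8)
The plan is to reuse the proof of Theorem~\ref{dmp_ss2} almost verbatim, since the \textbf{SS2-adjoint} propagator $\mathcal{S}(\tau/2)\widetilde{\mathcal{S}}(\tau/2)$ is merely the composition of the same two one-step maps in the reverse order. I would proceed by induction on $n$. The base case $n=0$ holds by the hypothesis $\|u^0\|_\infty \leq \beta$.

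For the inductive step, assume $\|u^l\|_\infty \leq \beta$ and set
\begin{equation*}
  v^l = \widetilde{\mathcal{S}}(\tau/2)\, u^l, \qquad u^{l+1} = \mathcal{S}(\tau/2)\, v^l .
\end{equation*}
First I would invoke Theorem~\ref{mbp_ess1_adjoint} with step size $\tau/2$: under Assumption~\ref{assumption_strong} and the restriction $\tau/2 \leq \min\{ h^2/(\kappa h^2 + 2\varepsilon^2),\, 1/(\kappa + \|f^\prime\|_{C[-\beta,\beta]}) \}$, the half-step \textbf{ESS1-adjoint} update is uniquely solvable in $[-\beta,\beta]$ and $\|v^l\|_\infty \leq \beta$. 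Then I would invoke Theorem~\ref{mbp_ess1} with step size $\tau/2$: since $\|v^l\|_\infty \leq \beta$, $\kappa \geq \|f^\prime\|_{C[-\beta,\beta]}$ and $\tau/2 \leq h^2/(2\varepsilon^2)$, it follows that $\|u^{l+1}\|_\infty \leq \beta$. This closes the induction and yields $\|u^n\|_\infty \leq \beta$ for all $0 \leq n \leq N_t$.

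The only point needing verification --- and the only place where I would slow down --- is that the single time-step restriction $0 < \tau \leq \min\{ 2h^2/(\kappa h^2 + 2\varepsilon^2),\, 2/(\kappa + \|f^\prime\|_{C[-\beta,\beta]}) \}$ of the theorem, once halved, implies both of the conditions required by Theorems~\ref{mbp_ess1} and~\ref{mbp_ess1_adjoint}. The \textbf{ESS1-adjoint} condition is exactly the halved version of the stated bound, while the \textbf{ESS1} condition $\tau/2 \leq h^2/(2\varepsilon^2)$ follows from $2h^2/(\kappa h^2 + 2\varepsilon^2) \leq h^2/\varepsilon^2$, which is immediate because $\kappa h^2 + 2\varepsilon^2 \geq 2\varepsilon^2$. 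Since every step reduces to a result already established, I do not anticipate any genuine obstacle: the statement is a direct corollary of the composition structure, and I would simply present the short induction above in place of the proof the paper omits.
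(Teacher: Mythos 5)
Your argument is exactly the ``comparable approach'' the paper alludes to when it omits this proof: it mirrors the proof of Theorem~\ref{dmp_ss2}, applying Theorem~\ref{mbp_ess1_adjoint} to the first half-step and Theorem~\ref{mbp_ess1} to the second, with the correct check that the halved time-step restriction implies both conditions (in particular $\tau \leq 2h^2/(\kappa h^2+2\varepsilon^2) \leq h^2/\varepsilon^2$). No gaps; your write-up matches the paper's intended proof.
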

\begin{rmk}
	In the above theorems, the restrictions on the time step imply that $\tau = \mathcal{O}(h^2 / \varepsilon^2)$, which are similar to those imposed in \cite{hou_cnab,hou_leapfrog,ju_esav}. These constraints mainly arises from the explicit discretization of  $\varDelta_h u$. We can also observe that the step size limitations are relaxed in the second-order schemes.
\end{rmk}
\subsection{DEDL of the proposed methods}
The following Lemma \ref{dg_property} is indispensable in establishing the energy stability of the proposed methods.
\begin{lem}\label{dg_property}
	Let $v, w \in \mathbb{M}_h$. Then, we have
	\begin{equation*}
		-\langle \varDelta_a v + \varDelta_b w, w - v \rangle = -\langle \varDelta_a w + \varDelta_b v, w - v \rangle = \frac{1}{2}\|\nabla_h w\|^2 - \frac{1}{2}\|\nabla_h v\|^2.
	\end{equation*}
\end{lem}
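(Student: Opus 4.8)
The plan is to reduce everything to two structural facts about the splitting. The first is that $D_b = D_a^{\mathrm{T}}$, which is immediate from the explicit forms of the two matrices: the $(-1)$-diagonal plus superdiagonal plus top-right-corner pattern of $D_a$ is exactly the transpose of the $(-1)$-diagonal plus subdiagonal plus bottom-left-corner pattern of $D_b$. The second is that, as a consequence, $\varDelta_a$ and $\varDelta_b$ are mutually adjoint with respect to the discrete inner product, i.e.
\begin{equation*}
	\langle \varDelta_a v, w \rangle = \langle v, \varDelta_b w \rangle, \qquad \langle \varDelta_b v, w \rangle = \langle v, \varDelta_a w \rangle, \qquad \forall \, v, w \in \mathbb{M}_h .
\end{equation*}
To obtain this adjoint identity I would use the trace representation $\langle p, q \rangle = h^2 \operatorname{tr}(p^{\mathrm{T}} q)$, under which left multiplication by a matrix $A$ has adjoint left multiplication by $A^{\mathrm{T}}$ and right multiplication by $A$ has adjoint right multiplication by $A^{\mathrm{T}}$; applying this to $\varDelta_a v = D_a v + v D_a^{\mathrm{T}}$ together with $D_a^{\mathrm{T}} = D_b$ yields $\langle \varDelta_a v, w \rangle = \langle v, D_b w + w D_b^{\mathrm{T}} \rangle = \langle v, \varDelta_b w \rangle$. (Alternatively this can be verified componentwise by index-shifting and periodicity, but the trace argument is cleaner.)

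With these in hand, the rest is bookkeeping. Expanding the bilinear form,
\begin{equation*}
	-\langle \varDelta_a v + \varDelta_b w, w - v \rangle = \langle \varDelta_a v, v \rangle - \langle \varDelta_b w, w \rangle + \big( \langle \varDelta_b w, v \rangle - \langle \varDelta_a v, w \rangle \big),
\end{equation*}
and the bracketed cross terms cancel by the adjoint identity, since $\langle \varDelta_a v, w \rangle = \langle v, \varDelta_b w \rangle = \langle \varDelta_b w, v \rangle$. For the diagonal terms, the adjoint identity gives $\langle \varDelta_a v, v \rangle = \langle \varDelta_b v, v \rangle$, hence each equals $\tfrac{1}{2} \langle \varDelta_h v, v \rangle = -\tfrac{1}{2} \|\nabla_h v\|^2$ by the summation-by-parts formula \eqref{summation_by_parts}; likewise $\langle \varDelta_b w, w \rangle = -\tfrac{1}{2} \|\nabla_h w\|^2$. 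Substituting yields the claimed value $\tfrac{1}{2}\|\nabla_h w\|^2 - \tfrac{1}{2}\|\nabla_h v\|^2$. The second equality in the lemma follows from the identical computation with the roles of $\varDelta_a$ and $\varDelta_b$ interchanged, which is legitimate because the two operators enter the adjoint relations symmetrically.

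The only genuinely delicate point is keeping the left- versus right-multiplication (row- versus column-direction) contributions straight in the 2D tensor setting, and making sure the corner entries of $D_a$ and $D_b$ — that is, the periodic wrap-around terms — are correctly accounted for both when asserting $D_b = D_a^{\mathrm{T}}$ and when invoking \eqref{summation_by_parts}. Once that is pinned down, no estimation is involved, only exact identities, so the argument is short.
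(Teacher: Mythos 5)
Your proof is correct and rests on the same structural fact as the paper's: $D_b = D_a^{\mathrm{T}}$, i.e. $\varDelta_a$ and $\varDelta_b$ are mutually adjoint with respect to the discrete inner product, which is exactly equivalent to the paper's key observation that $\varDelta_b - \varDelta_a$ is skew-adjoint. The paper organizes the computation as a symmetric/antisymmetric decomposition of $\varDelta_a v + \varDelta_b w$ before pairing with $w - v$, whereas you expand the bilinear form and cancel the cross terms directly, but both arguments combine this same adjointness property with the summation-by-parts formula \eqref{summation_by_parts}, so the approach is essentially identical.
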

	\begin{proof}
		We can easily obtain the following decompositions due to the fact that $\varDelta_h = \varDelta_a + \varDelta_b$
		\begin{equation*}
			\begin{aligned}
				 & \varDelta_a v + \varDelta_b w = \frac{1}{2}\varDelta_h (w + v) + \frac{1}{2} (\varDelta_b - \varDelta_a) (w - v), \\
				 & \varDelta_a v + \varDelta_b w = \varDelta_a w + \varDelta_b v + (\varDelta_b - \varDelta_a) (w - v).
			\end{aligned}
		\end{equation*}
		By applying the skew-adjoint of $\varDelta_b - \varDelta_a$ and Proposition \ref{summation_by_parts}, we arrive at the result of Lemma \ref{dg_property}.
	\end{proof}

\begin{lem}[\cite{csrk}]\label{convex_splitting}
	Suppose that $v$ and $w$ are sufficiently smooth and $F(\cdot)$ is twice continuously differentiable. Consider the canonical convex splitting of $F(\cdot)$ into $F(\cdot) = F_c(\cdot) - F_e(\cdot)$, where both $F_c(\cdot)$ and $F_e(\cdot)$ are convex. Then,
	\begin{equation*}
		F(\xi) - F(\eta) \leq (F^\prime_c (\xi) - F^\prime_e (\eta)) \cdot (\xi - \eta).
	\end{equation*}
\end{lem}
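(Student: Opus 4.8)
The plan is to exploit the first-order characterization of convexity separately for the two pieces $F_c$ and $F_e$ of the splitting, and then add the resulting one-sided estimates, being careful to evaluate the derivative of $F_c$ at $\xi$ and the derivative of $F_e$ at $\eta$ — precisely the pairing that appears on the right-hand side.

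First I would record the two elementary inequalities. Since $F_c$ is convex (equivalently $F_c'' \geq 0$, using that $F$ hence $F_c$ is twice continuously differentiable), the graph of $F_c$ lies above each of its tangent lines; evaluating the tangent line at $\xi$ gives
\begin{equation*}
	F_c(\eta) \geq F_c(\xi) + F_c'(\xi)(\eta - \xi), \qquad \text{i.e.} \qquad F_c(\xi) - F_c(\eta) \leq F_c'(\xi)(\xi - \eta).
\end{equation*}
Symmetrically, convexity of $F_e$ with the tangent line taken at $\eta$ yields
\begin{equation*}
	F_e(\xi) \geq F_e(\eta) + F_e'(\eta)(\xi - \eta), \qquad \text{i.e.} \qquad -\big(F_e(\xi) - F_e(\eta)\big) \leq -F_e'(\eta)(\xi - \eta).
\end{equation*}
If one prefers to avoid invoking the tangent-line inequality directly, each of these follows from the integral Taylor remainder, e.g. $F_c(\eta) - F_c(\xi) - F_c'(\xi)(\eta-\xi) = \int_0^1 (1-s)\,F_c''\big(\xi + s(\eta-\xi)\big)\,(\eta-\xi)^2\,ds \geq 0$.

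Then I would simply add the two displayed inequalities and use $F = F_c - F_e$:
\begin{equation*}
	F(\xi) - F(\eta) = \big(F_c(\xi) - F_c(\eta)\big) - \big(F_e(\xi) - F_e(\eta)\big) \leq F_c'(\xi)(\xi - \eta) - F_e'(\eta)(\xi - \eta) = \big(F_c'(\xi) - F_e'(\eta)\big)(\xi - \eta),
\end{equation*}
which is the claim. There is essentially no obstacle here: the only point requiring mild care is choosing the correct expansion points — the convexity of $F_c$ is used to bound $F_c(\eta)$ from below by its linearization at $\xi$, while the convexity of $F_e$ is used to bound $F_e(\xi)$ from below by its linearization at $\eta$ — and this asymmetry is exactly what produces the $F_c'(\xi) - F_e'(\eta)$ structure that later makes the convex-splitting time discretization unconditionally energy stable. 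The hypothesis that $v,w$ are sufficiently smooth is not needed for the pointwise inequality itself; it is only relevant when the estimate is subsequently applied grid-point-wise to smooth grid functions.
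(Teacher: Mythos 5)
Your proof is correct, and since the paper simply quotes this lemma from the cited reference without reproducing a proof, your tangent-line argument (convexity of $F_c$ linearized at $\xi$, convexity of $F_e$ linearized at $\eta$, then adding and using $F = F_c - F_e$) is exactly the standard derivation behind the cited result. Your closing remark is also apt: the smoothness of $v,w$ plays no role in the pointwise inequality and only matters when it is applied componentwise in the energy estimates.
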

\begin{coro}\label{corol}
    By setting $\kappa$ large enough such that $\kappa \geq \|f^\prime\|_{C[-\beta, \beta]}$, 
we can construct a convex splitting of bulk energy as
\begin{equation*} 
    F(\xi) = F_c(\xi) - F_e(\xi) = \frac{\kappa}{2} \xi^2 - (\frac{\kappa}{2} \xi^2 - F(\xi)).
\end{equation*}
It follows from Lemma.~\ref{convex_splitting} that
\begin{equation*}
	\langle F(u^{n+1}) - F(u^n), 1 \rangle \leq \langle \kappa u^{n+1} - (\kappa u^n + f(u^n)), u^{n+1} - u^n \rangle.
\end{equation*}
\end{coro}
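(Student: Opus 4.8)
The plan is to verify directly that $F = F_c - F_e$ with $F_c(\xi) = \tfrac{\kappa}{2}\xi^2$ and $F_e(\xi) = \tfrac{\kappa}{2}\xi^2 - F(\xi)$ is a genuine convex splitting on $[-\beta,\beta]$, and then to apply Lemma~\ref{convex_splitting} at each grid point and sum over the grid.

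First I would check that $F_c$ is convex: $F_c^{\prime\prime}(\xi) = \kappa > 0$ everywhere. For $F_e$, recall that $f = -F^\prime$, so $F^{\prime\prime} = -f^\prime$ and therefore $F_e^{\prime\prime}(\xi) = \kappa - F^{\prime\prime}(\xi) = \kappa + f^\prime(\xi)$. For any $\xi \in [-\beta,\beta]$ we have $\kappa \geq \|f^\prime\|_{C[-\beta,\beta]} \geq |f^\prime(\xi)| \geq -f^\prime(\xi)$, hence $F_e^{\prime\prime}(\xi) \geq 0$ and $F_e$ is convex on $[-\beta,\beta]$. (This step implicitly uses that $F$ is twice continuously differentiable on $[-\beta,\beta]$, which holds for the double-well potential \eqref{F_double_well} and, since $\beta < 1$, also for the logarithmic potential \eqref{F_log}.) This establishes the first assertion of the corollary.

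Next I would apply Lemma~\ref{convex_splitting} with $\xi = u_{ij}^{n+1}$ and $\eta = u_{ij}^n$ at each node $(i,j)$; this is legitimate because the DMP results (Theorems~\ref{mbp_ess1} and \ref{mbp_ess1_adjoint}) guarantee $u_{ij}^{n}, u_{ij}^{n+1} \in [-\beta,\beta]$, exactly the interval on which the splitting is convex. Since $F_c^\prime(\xi) = \kappa\xi$ and $F_e^\prime(\xi) = \kappa\xi + f(\xi)$, the lemma yields
\[
F(u_{ij}^{n+1}) - F(u_{ij}^n) \leq \big(\kappa u_{ij}^{n+1} - \kappa u_{ij}^n - f(u_{ij}^n)\big)\big(u_{ij}^{n+1} - u_{ij}^n\big).
\]
Multiplying both sides by $h^2$ and summing over $0 \leq i,j \leq M-1$ produces precisely $\langle F(u^{n+1}) - F(u^n), 1 \rangle \leq \langle \kappa u^{n+1} - (\kappa u^n + f(u^n)), u^{n+1} - u^n \rangle$, which is the second assertion.

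There is no serious obstacle here: the estimate is a routine consequence of Lemma~\ref{convex_splitting} once the splitting is shown to be convex. The only point requiring care is the bookkeeping that keeps the arguments $u_{ij}^n, u_{ij}^{n+1}$ inside $[-\beta,\beta]$ — needed both for the convexity of $F_e$ and, in the logarithmic case, for $F$ to be defined — which is supplied by the discrete maximum bound principle proved in the previous subsection.
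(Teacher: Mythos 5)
Your proposal is correct and follows the same route the paper intends: the corollary is stated in the paper as an immediate consequence of Lemma~\ref{convex_splitting}, and you simply fill in the details (convexity of $F_e$ via $F_e''=\kappa+f'\ge 0$ on $[-\beta,\beta]$, pointwise application with $F_c'(\xi)=\kappa\xi$, $F_e'(\eta)=\kappa\eta+f(\eta)$, and summation against $h^2$). Your remark that the DMP results keep $u^n_{ij},u^{n+1}_{ij}$ in $[-\beta,\beta]$ is exactly the bookkeeping the paper relies on implicitly.
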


\begin{thm}[DEDL of \textbf{ESS1} method]\label{energy_ess1}
				Under the assumptions of Theorem \ref{mbp_ess1}, the solution generated by the \textbf{ESS1} method satisfies
	\begin{equation}
		E_h(u^{n+1}) \leq E_h(u^n), \quad 0 \leq n \leq N_t - 1.
	\end{equation}
\end{thm}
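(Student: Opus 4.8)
The plan is to run the standard convex-splitting energy estimate, pairing the scheme against its own increment. Concretely, I would take the discrete $L^2$ inner product of \eqref{ess1} with $u^{n+1}-u^n=\tau\,\delta_t u^{n+\frac12}$ and then dispose of the three resulting terms on the right one at a time, using Lemma \ref{dg_property} for the diffusion term and Corollary \ref{corol} for the nonlinear and stabilization terms.

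A preliminary observation is that the hypotheses of this theorem are exactly those of Theorem \ref{mbp_ess1}, so the DMP supplies $\|u^m\|_\infty\le\beta$ for every $0\le m\le N_t$; in particular $\langle F(u^m),1\rangle$ is finite (this is where one uses $\beta<1$ for the Flory--Huggins potential), hence $E_h(u^m)$ is well defined throughout. With the pairing in hand, the left-hand side is $\langle\delta_t u^{n+\frac12},u^{n+1}-u^n\rangle=\frac1\tau\|u^{n+1}-u^n\|^2\ge 0$; the diffusion term, by Lemma \ref{dg_property} with $v=u^n$ and $w=u^{n+1}$, equals $\varepsilon^2\langle\varDelta_a u^n+\varDelta_b u^{n+1},u^{n+1}-u^n\rangle=\frac{\varepsilon^2}{2}\|\nabla_h u^n\|^2-\frac{\varepsilon^2}{2}\|\nabla_h u^{n+1}\|^2$; and the remaining contribution, by Corollary \ref{corol}, obeys $\langle f(u^n),u^{n+1}-u^n\rangle-\kappa\|u^{n+1}-u^n\|^2=-\langle\kappa u^{n+1}-(\kappa u^n+f(u^n)),u^{n+1}-u^n\rangle\le-\langle F(u^{n+1})-F(u^n),1\rangle$. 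Substituting these into the paired identity and rearranging yields $\frac1\tau\|u^{n+1}-u^n\|^2+E_h(u^{n+1})\le E_h(u^n)$, so in particular $E_h(u^{n+1})\le E_h(u^n)$, which is the claim; note the argument actually delivers the stronger relation with the dissipated term $\frac1\tau\|u^{n+1}-u^n\|^2$ retained.

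I do not anticipate a genuine obstacle here; it is essentially bookkeeping. The point worth stressing is why the asymmetric Saul'yev treatment of the Laplacian is harmless: Lemma \ref{dg_property} shows that $\varDelta_a u^n+\varDelta_b u^{n+1}$ still telescopes the discrete Dirichlet energy \emph{exactly}, so the implicit/explicit imbalance between the lower- and upper-triangular parts contributes no sign-indefinite remainder. The other ingredient, $\kappa\ge\|f'\|_{C[-\beta,\beta]}$, is precisely what makes $\tfrac{\kappa}{2}\xi^2-F(\xi)$ convex, so that Corollary \ref{corol} (and behind it Lemma \ref{convex_splitting}) applies; and, as noted, the $L^\infty$ bound from Theorem \ref{mbp_ess1} is what keeps $\langle F(u^{n+1}),1\rangle$ finite, which is exactly why the statement is made under the assumptions of Theorem \ref{mbp_ess1}.
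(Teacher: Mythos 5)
Your proposal is correct and follows essentially the same route as the paper: the paper's (one-line) proof also pairs \eqref{ess1} with $\mp\tau\,\delta_t u^{n+\frac{1}{2}}$ and invokes Lemma \ref{dg_property} for the diffusion part and Corollary \ref{corol} for the nonlinear plus stabilization part, the sign convention being immaterial. Your additional remarks---that the DMP from Theorem \ref{mbp_ess1} keeps $u^n,u^{n+1}$ in $[-\beta,\beta]$ so the convex splitting applies and $E_h$ is well defined, and that the estimate actually retains the dissipated term $\frac{1}{\tau}\|u^{n+1}-u^n\|^2$---are accurate elaborations of what the paper leaves implicit.
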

\begin{proof}
	Taking the discrete inner product on both sides of \eqref{ess1} by $-\tau \delta_t u^{n+\frac{1}{2}}$ and using Lemma \ref{dg_property}, Corollary \ref{corol}, it is straightforward to get the desired result.
\end{proof}

\begin{thm}[DEDL of \textbf{ESS1-adjoint} method]\label{energy_ess1_adjoint}
	Under the assumption of Theorem \ref{mbp_ess1_adjoint}, the solution generated by the \textbf{ESS1-adjoint} method satisfies
	\begin{equation*}
		E_h (u^{n+1}) \leq E_h(u^n), \quad  0 \leq n \leq N_t - 1.
	\end{equation*}
\end{thm}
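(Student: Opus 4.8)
The plan is to mimic the energy argument behind Theorem \ref{energy_ess1}, testing the scheme against $-\tau\delta_t u^{n+\frac12}$, but to replace the convex-splitting estimate of Corollary \ref{corol} — which is tailored to an \emph{explicit} treatment of $f$ — by a Taylor estimate adapted to the implicit evaluation $f(u^{n+1})$ appearing in \eqref{ess1_adjoint}. Throughout, Theorem \ref{mbp_ess1_adjoint} supplies the fact that every iterate lies in $[-\beta,\beta]$ pointwise, so $F(u^n)$ and the energy $E_h(u^n)$ are well defined.

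First I would take the discrete inner product of \eqref{ess1_adjoint} with $-\tau\delta_t u^{n+\frac12}=-(u^{n+1}-u^n)$. The left-hand side gives $-\tau\|\delta_t u^{n+\frac12}\|^2=-\tfrac1\tau\|u^{n+1}-u^n\|^2$. For the diffusion term, Lemma \ref{dg_property} (with $v=u^n$, $w=u^{n+1}$) converts $-\varepsilon^2\langle \varDelta_a u^{n+1}+\varDelta_b u^n, u^{n+1}-u^n\rangle$ into $\tfrac{\varepsilon^2}{2}\|\nabla_h u^{n+1}\|^2-\tfrac{\varepsilon^2}{2}\|\nabla_h u^n\|^2$. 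Collecting the remaining terms yields the identity
$$\tfrac{\varepsilon^2}{2}\|\nabla_h u^{n+1}\|^2-\tfrac{\varepsilon^2}{2}\|\nabla_h u^n\|^2 = -\tfrac1\tau\|u^{n+1}-u^n\|^2 + \langle f(u^{n+1}), u^{n+1}-u^n\rangle + \kappa\|u^{n+1}-u^n\|^2,$$
and adding $\langle F(u^{n+1})-F(u^n),1\rangle$ to both sides turns the left-hand side into $E_h(u^{n+1})-E_h(u^n)$.

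Next I would estimate the bulk part. Since $F\in C^2[-\beta,\beta]$ with $F'=-f$ and $|F''|=|f'|\le\|f'\|_{C[-\beta,\beta]}$, a pointwise Taylor expansion of $F(u^n)$ about $u^{n+1}$ (the intermediate point stays in $[-\beta,\beta]$ by the DMP of Theorem \ref{mbp_ess1_adjoint}) gives
$$F(u^{n+1})-F(u^n) \le -f(u^{n+1})(u^{n+1}-u^n) + \tfrac12\|f'\|_{C[-\beta,\beta]}(u^{n+1}-u^n)^2.$$
Multiplying by $h^2$ and summing, then inserting this into the identity above, the two occurrences of $\langle f(u^{n+1}),u^{n+1}-u^n\rangle$ cancel and we obtain $E_h(u^{n+1})-E_h(u^n)\le\big(\kappa+\tfrac12\|f'\|_{C[-\beta,\beta]}-\tfrac1\tau\big)\|u^{n+1}-u^n\|^2$. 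The restriction $\tau\le\frac{1}{\kappa+\|f'\|_{C[-\beta,\beta]}}$ from Theorem \ref{mbp_ess1_adjoint} forces $\tfrac1\tau\ge\kappa+\|f'\|_{C[-\beta,\beta]}\ge\kappa+\tfrac12\|f'\|_{C[-\beta,\beta]}$, so the bracket is non-positive and $E_h(u^{n+1})\le E_h(u^n)$.

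The main obstacle is precisely the implicit treatment of the nonlinear term: unlike \textbf{ESS1}, the \textbf{ESS1-adjoint} scheme cannot invoke the one-sided convex-splitting inequality of Corollary \ref{corol} directly, since that inequality contracts $f$ at the old level $u^n$ while the scheme carries $f(u^{n+1})$. Replacing it by the \emph{reverse} Taylor/tangent estimate for $F$ unavoidably produces the quadratic surplus $\tfrac12\|f'\|_{C[-\beta,\beta]}\|u^{n+1}-u^n\|^2$, and the crux of the proof is checking that the stabilized dissipation $\kappa\|u^{n+1}-u^n\|^2$ together with the $\tfrac1\tau\|u^{n+1}-u^n\|^2$ coming from the discrete time derivative absorbs it under the stated time-step condition. (One could instead bound $|F''|\le\kappa$ using $\kappa\ge\|f'\|_{C[-\beta,\beta]}$, but that forces a slightly worse constant, so the $\tfrac12\|f'\|_{C[-\beta,\beta]}$ bound is the efficient choice.)
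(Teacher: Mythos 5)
Your argument is correct and takes essentially the same route as the paper: test the scheme with the time increment, use Lemma \ref{dg_property} to reduce the diffusion term, arrive at the same identity for $E_h(u^{n+1})-E_h(u^n)$, and absorb the bulk-term surplus into $-(\tfrac{1}{\tau}-\kappa)\|u^{n+1}-u^n\|^2$ under $\tau\le\tfrac{1}{\kappa+\|f^\prime\|_{C[-\beta,\beta]}}$. The only (harmless) difference is that you bound the surplus $\langle F(u^{n+1})-F(u^n),1\rangle+\langle f(u^{n+1}),u^{n+1}-u^n\rangle$ by a second-order Taylor expansion of $F$, giving the constant $\tfrac12\|f^\prime\|_{C[-\beta,\beta]}$, whereas the paper uses the mean-value theorem together with the Lipschitz bound on $f$, giving $\|f^\prime\|_{C[-\beta,\beta]}$; the same time-step restriction closes the argument in both cases.
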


\begin{proof}
	Taking the discrete inner product on both sides of \eqref{ess1_adjoint} by $\tau \delta_t u^{n+\frac{1}{2}}$, using Lemma \ref{dg_property} and making some arrangements to obtain
	\begin{equation*}
		(\kappa -\frac{1}{\tau}) \|u^{n+1} - u^n\|^2 = \frac{\varepsilon^2}{2} \|\nabla_h u^{n+1}\|^2 - \frac{\varepsilon^2}{2} \|\nabla_h u^n\|^2 - \langle f(u^{n+1}), \tau \delta_t u^{n+\frac{1}{2}} \rangle,
	\end{equation*}
	that is
	\begin{equation}\label{energy_ess1_adjoint_eq1}
		E_h(u^{n+1}) - E_h(u^n)
		= -(\frac{1}{\tau} - \kappa)\|u^{n+1} - u^n\|^2 + \langle F(u^{n+1}) - F(u^n), 1 \rangle + \langle f(u^{n+1}), u^{n+1} - u^n \rangle. \\
	\end{equation}
	The mean-value theorem and the fact $F^\prime = -f$ then yield
	\begin{equation*}
		\begin{aligned}
			 & \langle F(u^{n+1}) - F(u^n), 1 \rangle + \langle f(u^{n+1}), u^{n+1} - u^n \rangle                                                                                        \\
			 & \quad = h^2 \sum\limits_{i=0}^{M-1}\sum\limits_{j=0}^{M-1} \left( F(u_{ij}^{n+1}) - F(u_{ij}^n) + f(u_{ij}^{n+1})(u_{ij}^{n+1} - u_{ij}^{n}) \right)                  \\
			 & \quad = h^2 \sum\limits_{i=0}^{M-1}\sum\limits_{j=0}^{M-1} \left( f(u_{ij}^{n+1}) - f(\xi_{ij}^{n+1}) \right) (u_{ij}^{n+1} - u_{ij}^n)                                \\
			 &\quad \leq h^2 \|f^\prime\|_{C[-\beta, \beta]} \sum\limits_{i=0}^{M-1} \sum\limits_{j=0}^{M-1}  |u_{ij}^{n+1} - \xi_{ij}^{n+1}| \cdot |u_{ij}^{n+1} - u_{ij}^n|.
		\end{aligned}
	\end{equation*}
	It follows from the fact $\xi_{ij}^{n+1} \in (\min \{u_{ij}^n, u_{ij}^{n+1}\}, \max \{u_{ij}^n, u_{ij}^{n+1}\})$ that
	\begin{equation}\label{energy_ess1_adjoint_eq2}
		\langle F(u^{n+1}) - F(u^n), 1 \rangle + \langle f(u^{n+1}), u^{n+1} - u^n \rangle \leq \|f^\prime\|_{C[-\beta, \beta]} \|u^{n+1} - u^n\|^2. \\
	\end{equation}
	We obtain by substituting \eqref{energy_ess1_adjoint_eq2} into \eqref{energy_ess1_adjoint_eq1} that
	\begin{equation*}
		E_h(u^{n+1}) - E_h(u^n) \leq -(\frac{1}{\tau} -\kappa - \|f^\prime\|_{C[-\beta, \beta]})\|u^{n+1} - u^n\|^2 \leq 0,
	\end{equation*}
	which provides us the result of Theorem \ref{energy_ess1_adjoint}.
\end{proof}
\begin{thm}[Energy dissipation of \textbf{SS2} method]
				Under the assumption of Theorem \ref{dmp_ss2}, the solution generated by the \textbf{SS2} method satisfies
	\begin{equation*}
		E_h (u^{n+1}) \leq E_h(u^n), \quad 0 \leq n \leq N_t- 1.
	\end{equation*}
\end{thm}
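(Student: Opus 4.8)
The plan is to exploit the composition structure \eqref{ss2} and simply chain together the energy-dissipation estimates already proved for the two building blocks. Write $v^n = \mathcal{S}(\tau/2) u^n$, so that $u^{n+1} = \widetilde{\mathcal{S}}(\tau/2) v^n$; that is, $v^n$ is one step of the \textbf{ESS1} method applied to $u^n$ with step size $\tau/2$, and $u^{n+1}$ is one step of the \textbf{ESS1-adjoint} method applied to $v^n$ with step size $\tau/2$. The argument will proceed by induction on $n$, with the inductive hypothesis being $\|u^n\|_\infty \leq \beta$ (so that the discrete energy $E_h$ and the constant $\|f^\prime\|_{C[-\beta,\beta]}$ are the relevant quantities at each stage); the base case $n=0$ holds by assumption.

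First I would check that the step-size hypothesis of Theorem \ref{dmp_ss2}, namely $0 < \tau \leq \min\{ \frac{2h^2}{\kappa h^2 + 2\varepsilon^2}, \frac{2}{\kappa + \|f^\prime\|_{C[-\beta, \beta]}}\}$, guarantees that $\tau/2$ satisfies all the restrictions needed for the sub-steps. Indeed, $\tau/2 \leq \frac{h^2}{\kappa h^2 + 2\varepsilon^2} \leq \frac{h^2}{2\varepsilon^2}$, which is the condition required by Theorems \ref{mbp_ess1} and \ref{energy_ess1} for the \textbf{ESS1} sub-step; and $\tau/2 \leq \min\{\frac{h^2}{\kappa h^2 + 2\varepsilon^2}, \frac{1}{\kappa + \|f^\prime\|_{C[-\beta,\beta]}}\}$, which is exactly the condition required by Theorems \ref{mbp_ess1_adjoint} and \ref{energy_ess1_adjoint} for the \textbf{ESS1-adjoint} sub-step. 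The stabilization condition $\kappa \geq \|f^\prime\|_{C[-\beta,\beta]}$ is unaffected by halving the step.

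Next, applying Theorem \ref{mbp_ess1} (with $\tau$ replaced by $\tau/2$) to the sub-step $v^n = \mathcal{S}(\tau/2) u^n$ and using $\|u^n\|_\infty \leq \beta$ gives $\|v^n\|_\infty \leq \beta$; then Theorem \ref{mbp_ess1_adjoint} (with $\tau$ replaced by $\tau/2$) applied to $u^{n+1} = \widetilde{\mathcal{S}}(\tau/2) v^n$ gives $\|u^{n+1}\|_\infty \leq \beta$, closing the induction for the bound and, more importantly, ensuring that the hypotheses of the energy theorems are met at each stage (exactly as in the proof of Theorem \ref{dmp_ss2}). With $\|u^n\|_\infty, \|v^n\|_\infty \leq \beta$ in hand, Theorem \ref{energy_ess1} applied to the first sub-step yields $E_h(v^n) \leq E_h(u^n)$, and Theorem \ref{energy_ess1_adjoint} applied to the second sub-step yields $E_h(u^{n+1}) \leq E_h(v^n)$. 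Combining the two inequalities gives $E_h(u^{n+1}) \leq E_h(v^n) \leq E_h(u^n)$, which is the claim.

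There is essentially no hard step here: the only thing to be careful about is the bookkeeping for the step-size constraints under $\tau \mapsto \tau/2$ and the verification that the intermediate state $v^n$ remains in $[-\beta,\beta]$ so that the hypotheses of the adjoint-step energy estimate (which are phrased relative to the bound $\beta$) genuinely apply. Both are immediate consequences of the already-established Theorems \ref{mbp_ess1}, \ref{mbp_ess1_adjoint}, \ref{energy_ess1} and \ref{energy_ess1_adjoint}. The analogous statement for \textbf{SS2-adjoint} follows by the same reasoning with the roles of $\mathcal{S}$ and $\widetilde{\mathcal{S}}$ interchanged.
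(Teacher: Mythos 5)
Your proposal is correct and follows essentially the same route as the paper: define the intermediate state $v^n=\mathcal{S}(\tau/2)u^n$ and chain the dissipation estimates of Theorems \ref{energy_ess1} and \ref{energy_ess1_adjoint}, giving $E_h(u^{n+1})\leq E_h(v^n)\leq E_h(u^n)$. The only difference is that you spell out the $\tau\mapsto\tau/2$ step-size bookkeeping and the bound $\|v^n\|_\infty\leq\beta$ explicitly, which the paper leaves implicit via Theorem \ref{dmp_ss2}.
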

\begin{proof}
	Let
	\begin{equation*}
		\begin{aligned}
			v^n = \mathcal{S}(\tau / 2) u^n \quad \text{and} \quad u^{n+1} = \widetilde{S}(\tau / 2) \mathcal{S}(\tau / 2) u^n = \widetilde{\mathcal{S}}(\tau / 2) v^n.
		\end{aligned}
	\end{equation*}
	Theorems \ref{energy_ess1}, \ref{energy_ess1_adjoint} give us
	\begin{equation*}
		E_h(v^n) \leq E_h(u^n), \quad E_h(u^{n+1}) \leq E_h(v^n).
	\end{equation*}
	Adding the above equations together leads to the desired result.
\end{proof}
\begin{thm}[Energy dissipation of \textbf{SS2-adjoint} method]
				Under the assumption of Theorem \ref{dmp_ss2_adj}, the solution generated by the \textbf{SS2-adjoint} method satisfies
	\begin{equation*}
		E_h (u^{n+1}) \leq E_h(u^n), \quad  0 \leq n \leq N_t - 1.
	\end{equation*}
\end{thm}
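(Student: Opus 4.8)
The plan is to mirror the proof just given for the energy dissipation of the \textbf{SS2} method, interchanging the roles of $\mathcal{S}$ and $\widetilde{\mathcal{S}}$, since \textbf{SS2-adjoint} is $u^{n+1} = \mathcal{S}(\tau/2)\widetilde{\mathcal{S}}(\tau/2)u^n$. First I would introduce the intermediate grid function $v^n = \widetilde{\mathcal{S}}(\tau/2)u^n$, so that $u^{n+1} = \mathcal{S}(\tau/2)v^n$; that is, one step of \textbf{SS2-adjoint} is an \textbf{ESS1-adjoint} half-step followed by an \textbf{ESS1} half-step. The whole argument is then a two-link chain of the already-established half-step dissipation results, together with the DMP of \textbf{SS2-adjoint} (Theorem \ref{dmp_ss2_adj}) and of its building blocks to guarantee the intermediate state stays in $[-\beta,\beta]$.

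Before invoking the half-step results I would carry out the time-step bookkeeping. The hypothesis $0 < \tau \leq \min\{\frac{2h^2}{\kappa h^2 + 2\varepsilon^2}, \frac{2}{\kappa + \|f^\prime\|_{C[-\beta,\beta]}}\}$ of Theorem \ref{dmp_ss2_adj} must be shown to imply that the halved step $\tau/2$ meets both the restriction $\tau/2 \leq \min\{\frac{h^2}{\kappa h^2 + 2\varepsilon^2}, \frac{1}{\kappa + \|f^\prime\|_{C[-\beta,\beta]}}\}$ needed for the \textbf{ESS1-adjoint} half-step (which is immediate) and the restriction $\tau/2 \leq \frac{h^2}{2\varepsilon^2}$ needed for the \textbf{ESS1} half-step (which follows since $\frac{2h^2}{\kappa h^2 + 2\varepsilon^2} \leq \frac{h^2}{\varepsilon^2}$, as $\kappa h^4 \geq 0$). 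Together with the standing choice $\kappa \geq \|f^\prime\|_{C[-\beta,\beta]}$ and the assumptions on $f$ (in particular Assumption \ref{assumption_strong}, which the \textbf{ESS1-adjoint} half-step requires), all structural hypotheses of Theorems \ref{mbp_ess1}, \ref{mbp_ess1_adjoint}, \ref{energy_ess1}, \ref{energy_ess1_adjoint} are then in force at step size $\tau/2$.

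With that in place, the core of the argument is short. Starting from $\|u^n\|_\infty \leq \beta$ (Theorem \ref{dmp_ss2_adj}), Theorem \ref{mbp_ess1_adjoint} shows the half-step defining $v^n$ is uniquely solvable with $\|v^n\|_\infty \leq \beta$, and Theorem \ref{energy_ess1_adjoint} gives $E_h(v^n) \leq E_h(u^n)$. Since $\|v^n\|_\infty \leq \beta$, Theorem \ref{energy_ess1} applies to the half-step defining $u^{n+1}$ and yields $E_h(u^{n+1}) \leq E_h(v^n)$. Concatenating the two inequalities gives $E_h(u^{n+1}) \leq E_h(v^n) \leq E_h(u^n)$, which is the claim.

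I expect the only genuinely delicate point to be precisely this time-step bookkeeping: one must check that the single constraint stated in Theorem \ref{dmp_ss2_adj} is strong enough to simultaneously license the MBP and energy estimates of both constituent schemes at the halved step, including the boundedness of the intermediate state $v^n$ that Theorem \ref{energy_ess1} tacitly needs. Once that verification is made, the result is an immediate consequence of the compositional structure of the scheme and the already-proven half-step theorems, exactly as in the \textbf{SS2} case, so no new estimates are needed.
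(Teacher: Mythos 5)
Your proposal is correct and is exactly the argument the paper intends: the paper omits the proof, stating it is obtained by the same composition argument as for \textbf{SS2}, namely chaining $E_h(v^n)\leq E_h(u^n)$ (via the \textbf{ESS1-adjoint} half-step, Theorem \ref{energy_ess1_adjoint}) and $E_h(u^{n+1})\leq E_h(v^n)$ (via the \textbf{ESS1} half-step, Theorem \ref{energy_ess1}), with the DMP guaranteeing the intermediate state stays in $[-\beta,\beta]$. Your additional check that the stated time-step restriction of Theorem \ref{dmp_ss2_adj} covers both half-step requirements at step $\tau/2$ is a correct and welcome piece of bookkeeping that the paper leaves implicit.
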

It is worth mentioning that the DEDL of the \textbf{SS2-adjoint} can be obtained through a comparable approach. Thus, we omit the detailed proof. 

\section{Convergence analysis}\label{convergence}
Using the energy method, we present convergence analysis of the proposed methods with respect to the norm defined as
\begin{equation*}
	\|v\|^2_\kappa = \frac{\varepsilon^2}{2} \|\nabla_h v\|^2 + \kappa \|v\|^2, \quad v \in \mathbb{M}_h.
\end{equation*}
To simplify the derivations, we denote by $L = \|f^\prime\|_{C[-\beta, \beta]}$ and by $C$ a generic positive constant independent of the discretization parameters. In this paper, we provide rigorous analysis of the convergence of the \textbf{ESS1} and \textbf{SS2} methods, while we omit the analysis of the \textbf{ESS1-adjoint} and \textbf{SS2-adjoint} methods as they are similar.
\begin{thm}\label{conv_ess1}
	Suppose that the exact solution of \eqref{ac} $u_e(x, y, t)\in C^2([0 ,T]; C^4(\overline{\Omega}))$, and the initial value $\|u_e(0)\|_\infty \leq \beta$. Then the $\textbf{ESS1}$ scheme is convergent in the sense
	\begin{equation}\label{convergence_ineq_ess1}
					\|u_e(t_n) - u^n\|_\kappa \leq C (\frac{\varepsilon^2 \tau}{h} + \tau + \varepsilon^2 h^2 ), \quad \forall \ 0 \leq n \leq \left[\frac{T}{\tau}\right].
	\end{equation}
	Provided that the time step $0 < \tau \leq \frac{h^2}{2\varepsilon^2}$. Consequently, if the spatial step satisfies $h \geq C \varepsilon^2$, then the \textbf{ESS1} scheme is convergent in the sense
	\begin{equation*}
					\|u_e(t_n) - u^n\|_\kappa \leq C(\tau + \varepsilon^2 h^2).
	\end{equation*}
\end{thm}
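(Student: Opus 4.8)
The plan is to derive a recursion for the error $e^n = u_e(t_n) - u^n$ in the $\|\cdot\|_\kappa$ norm by subtracting the scheme \eqref{ess1} from the exact equation evaluated at the grid. First I would write the consistency (truncation) error: plugging $u_e$ into the \textbf{ESS1} discretization gives
\begin{equation*}
	\delta_t u_e(t_{n})^{n+\frac12} = \varepsilon^2(\varDelta_a u_e(t_n) + \varDelta_b u_e(t_{n+1})) + f(u_e(t_n)) - \kappa(u_e(t_{n+1}) - u_e(t_n)) + R^n,
\end{equation*}
where the local truncation error $R^n$ splits into three contributions: the temporal error from $\delta_t$ versus $\partial_t$ and from evaluating $f$ and the $\kappa$-stabilization at $t_n$ rather than $t_{n+1/2}$, which is $\mathcal{O}(\tau)$ (the $\kappa$ term contributes $\kappa\tau$, absorbed into $C$); the spatial error from $\varDelta_h$ versus $\Delta$, which is $\mathcal{O}(\varepsilon^2 h^2)$ using $u_e\in C^4$; and crucially the \emph{splitting error} coming from evaluating $\varDelta_b$ at $t_{n+1}$ instead of $t_n$, i.e. $\varepsilon^2\varDelta_b(u_e(t_{n+1}) - u_e(t_n)) = \varepsilon^2\tau\varDelta_b \partial_t u_e + \cdots$. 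Since $\varDelta_b$ has entries of size $h^{-2}$ but is "one-sided," a careful estimate (summation by parts, exploiting that $\varDelta_b - \varDelta_a$ is skew-adjoint so only the symmetric defect survives) shows this term is $\mathcal{O}(\varepsilon^2\tau/h)$ in the relevant norm rather than $\mathcal{O}(\varepsilon^2\tau/h^2)$. This is the source of the $\varepsilon^2\tau/h$ term in \eqref{convergence_ineq_ess1} and I expect it to be the main obstacle.

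Next I would subtract to get the error equation
\begin{equation*}
	\delta_t e^{n+\frac12} = \varepsilon^2(\varDelta_a e^n + \varDelta_b e^{n+1}) + (f(u_e(t_n)) - f(u^n)) - \kappa(e^{n+1} - e^n) + R^n.
\end{equation*}
Then I would test this with $\tau\,\delta_t e^{n+\frac12} = e^{n+1} - e^n$ in the discrete inner product. The key structural tools are Lemma \ref{dg_property}, which converts $-\langle \varepsilon^2(\varDelta_a e^n + \varDelta_b e^{n+1}), e^{n+1}-e^n\rangle$ into $\frac{\varepsilon^2}{2}\|\nabla_h e^{n+1}\|^2 - \frac{\varepsilon^2}{2}\|\nabla_h e^n\|^2$, and the stabilization identity $\langle \kappa(e^{n+1}-e^n), e^{n+1}-e^n\rangle = \kappa\|e^{n+1}-e^n\|^2$. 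Writing everything in terms of $\|e^n\|_\kappa^2$, one obtains schematically
\begin{equation*}
	\|e^{n+1}\|_\kappa^2 - \|e^n\|_\kappa^2 + \|e^{n+1}-e^n\|^2 \le \tau\langle f(u_e(t_n)) - f(u^n), \delta_t e^{n+\frac12}\rangle + \tau\langle R^n, \delta_t e^{n+\frac12}\rangle.
\end{equation*}
Using the MBP (Theorem \ref{mbp_ess1}) to guarantee $u^n\in[-\beta,\beta]$, so that $|f(u_e(t_n)) - f(u^n)| \le L|e^n|$, and applying Cauchy–Schwarz plus Young's inequality $ab \le \frac{1}{2}\|e^{n+1}-e^n\|^2 + \cdots$ on both right-hand terms, the $\|e^{n+1}-e^n\|^2$ on the right gets absorbed by the one on the left, leaving
\begin{equation*}
	\|e^{n+1}\|_\kappa^2 - \|e^n\|_\kappa^2 \le C\tau\|e^n\|_\kappa^2 + C\tau\|R^n\|^2.
\end{equation*}

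Finally I would invoke the discrete Gronwall inequality: summing from $0$ to $n-1$ with $e^0 = 0$ gives $\|e^n\|_\kappa^2 \le C\tau\sum_{k=0}^{n-1}\|R^k\|^2 \cdot e^{CT}$, and since $\|R^k\| \le C(\varepsilon^2\tau/h + \tau + \varepsilon^2 h^2)$ uniformly and $\tau\cdot n \le T$, we get $\|e^n\|_\kappa \le C(\varepsilon^2\tau/h + \tau + \varepsilon^2 h^2)$, which is \eqref{convergence_ineq_ess1}. The final assertion follows immediately: if $h \ge C\varepsilon^2$ then $\varepsilon^2\tau/h \le \tau/C$, so the first term is dominated by the $\tau$ term and $\|e^n\|_\kappa \le C(\tau + \varepsilon^2 h^2)$. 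Besides the splitting-error estimate, the other point requiring care is verifying that the constants $C$ (which depend on $\kappa$, $L$, $T$ and norms of $u_e$) are genuinely independent of $\tau$ and $h$, and that the time-step restriction $\tau \le h^2/(2\varepsilon^2)$ — needed both for the MBP and possibly to control the explicit-$\varDelta_a$ contribution — is consistently used.
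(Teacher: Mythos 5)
Your proposal follows essentially the same route as the paper's proof: a defect (truncation) equation with $\|\mathcal{R}_1\|\leq C(\varepsilon^2\tau/h+\tau+\varepsilon^2h^2)$, the error equation tested with $\tau\,\delta_t e^{n+\frac{1}{2}}$, Lemma \ref{dg_property} for the split Laplacian, the MBP (Theorem \ref{mbp_ess1}) to get the Lipschitz bound on $f$, Cauchy--Schwarz/Young to absorb $\|e^{n+1}-e^n\|^2$, and the discrete Gronwall inequality with $e^0=0$, followed by the observation that $h\geq C\varepsilon^2$ makes $\varepsilon^2\tau/h\leq C\tau$. The only minor cosmetic differences are that the paper obtains the $\varepsilon^2\tau/h$ contribution by a direct Taylor expansion of $\varepsilon^2\varDelta_b\big(u_e(t_{n+1})-u_e(t_n)\big)$ (no skew-adjointness argument is needed) and rewrites $\kappa\|e^{n+1}-e^n\|^2$ via $(a-b)^2=a^2-b^2-2b(a-b)$ to form the $\kappa$-part of $\|\cdot\|_\kappa$, exactly as your ``writing everything in terms of $\|e^n\|_\kappa^2$'' step requires.
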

\begin{proof}
	The exact solution can be regarded as satisfying \eqref{ess1} with a defect $\mathcal{R}_1$, such that
	\begin{equation}\label{exact_solution_defect}
		\begin{aligned}
			\frac{u_e(t_{n+1}) - u_e(t_n)}{\tau} & = \varepsilon^2 (\varDelta_a u_e(t_n) + \varDelta_b u_e(t_{n+1})) + f(u_e(t_n)) \\
			                                     & - \kappa (u_e(t_{n+1}) - u_e(t_n)) + \mathcal{R}_1.
		\end{aligned}
	\end{equation}
	Using the Taylor's formula, we obtain
	\begin{equation*}
		\|\mathcal{R}_1\| \leq C(\frac{\varepsilon^2 \tau}{h} + \tau + \varepsilon^2 h^2).
	\end{equation*}
	Let $e^n = u_e(t_n) - u^n$ be the solution error, which satisfies the error equation obtained by subtracting \eqref{ess1} from \eqref{exact_solution_defect}
	\begin{equation}\label{erreq_ess1_convergence}
		\delta_t e^{n+\frac{1}{2}} = \varepsilon^2 (\varDelta_a e^n + \varDelta_b e^{n+1} ) + f(u_e(t_n)) - f(u^n) - \kappa (e^{n+1} - e^n) + \mathcal{R}_1.
	\end{equation}
	Taking the discrete inner product on both sides of \eqref{erreq_ess1_convergence} by $\tau \delta_t e^{n+\frac{1}{2}}$, using Lemma \ref{dg_property}, and rearranging the resulting equation, we get
	\begin{equation*}
		\begin{aligned}
			& \frac{1}{\tau}\|e^{n+1} - e^n\|^2 + \frac{\varepsilon^2}{2} \|\nabla_h e^{n+1}\|^2 - \frac{\varepsilon^2}{2}\|\nabla_h e^n\|^2 + \kappa \|e^{n+1} - e^n\|^2 \\
			& \quad \leq \langle f(u_e(t_n)) - f(u^n), e^{n+1} - e^n \rangle + \langle \mathcal{R}_1, e^{n+1} - e^n \rangle.
		\end{aligned}
	\end{equation*}
	Applying the identity $(a - b)^2 = a^2 - b^2 - 2b(a-b)$, Cauch-Schwarz and Young's inequalities, and Theorem \ref{mbp_ess1}, we can derive the following estimate:
	\begin{equation}\label{derivation_gronwall}
		\begin{aligned}
			& \frac{1}{\tau}\|e^{n+1} - e^n\|^2 + \|e^{n+1}\|_\kappa^2 - \|e^n\|_\kappa^2                                                           \\
			& \quad \leq  L \|e^{n+1}-e^n\|^2 + 2\kappa\|e^n\| \|e^{n+1} - e^n\| + \|\mathcal{R}_1\|\|e^{n+1} - e^n\|                                 \\
			& \quad \leq  (2\kappa + L)(\|e^n\| + \|e^{n+1}\| + \|\mathcal{R}_1\|)  \|e^{n+1} - e^n\|                                    \\
			& \quad \leq \frac{\tau}{4} \big( (2\kappa + L)(\|e^n\| + \|e^{n+1}\| + \|\mathcal{R}_1\| \big)^2 + \frac{1}{\tau}\|e^{n+1} - e^n\|^2,
		\end{aligned}
	\end{equation}
	that is 
	\begin{equation*}
		\|e^{n+1}\|_\kappa^2 - \|e^n\|_\kappa^2 \leq \frac{(2\kappa + L)^2}{2} \tau (\|e^{n+1}\|_\kappa^2 + \|e^n\|_\kappa^2) + C\tau \|\mathcal{R}_1\|^2.
	\end{equation*}
	Therefore, with the time step restriction in place, we can use the discrete Gronwall inequality to obtain
	\begin{equation*}
		\|e^n\|^2_\kappa \leq \exp{((2\kappa + L)^2 n \tau)} \|e^0\|^2_\kappa + C (\frac{\varepsilon^2 \tau}{h} + \tau + \varepsilon^2 h^2)^2.
	\end{equation*}
	Upon incorporating the initial condition $\|e^0\|_\kappa = 0$, we arrive at the desired inequality \eqref{convergence_ineq_ess1}, and hence complete the proof.
\end{proof}
\begin{thm}\label{convergence_thm_ss2}
	Suppose that the exact solution of \eqref{ac} satisfies $u_e(x, y, t) \in C^3([0,T]; C^4(\overline{\Omega}))$ and the initial value $\|u_e(0)\| \leq \beta$. Then the \textbf{SS2} scheme is convergent in the sense
	\begin{equation}\label{convergence_ineq_ss2}
		\| u_e(t_n) - u^n\|_\kappa \leq C(\frac{\varepsilon^4 \tau^2}{h^3} + \frac{\varepsilon^2 \tau^2}{h} + \tau^2 + \varepsilon^2 h^2),  \quad \forall \ 0 \leq n \leq \left[\frac{T}{\tau}\right].
	\end{equation}
	Provided that $\kappa \geq L$ and $0 < \tau \leq \min\{ \frac{2}{\kappa + L}, \frac{2h^2}{\kappa^2 + 2\varepsilon^2}, \frac{\kappa^2}{(L+4\kappa)^2} \}$. Consequently, if the spatial step satisfies $h \geq C \varepsilon^{\frac{4}{3}}$, then the \textbf{SS2} scheme is convergent in the sense
	\begin{equation*}
					\|u_e(t_n) - u^n\|_\kappa \leq C( \tau^2 + \varepsilon^2 h^2 ).
	\end{equation*}
\end{thm}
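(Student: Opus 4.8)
The plan is to follow the same two-stage decomposition used in the proof of Theorem~\ref{dmp_ss2}: one step of \textbf{SS2} is $v^n=\mathcal{S}(\tau/2)u^n$ followed by $u^{n+1}=\widetilde{\mathcal{S}}(\tau/2)v^n$, and I would run a consistency analysis together with an energy-type error recursion over these two half-steps, reusing the machinery built for \textbf{ESS1} in Theorem~\ref{conv_ess1}. The starting point is the consistency estimate. Put $v_e^n:=\mathcal{S}(\tau/2)u_e(t_n)$, so that the first (forward) half-step is satisfied \emph{exactly} by the pair $(u_e(t_n),v_e^n)$, while $(v_e^n,u_e(t_{n+1}))$ satisfies the second (\textbf{ESS1-adjoint}) half-step only up to a defect $\mathcal{R}_2$:
\begin{equation*}
\frac{u_e(t_{n+1})-v_e^n}{\tau/2}=\varepsilon^2\big(\varDelta_a u_e(t_{n+1})+\varDelta_b v_e^n\big)+f(u_e(t_{n+1}))+\kappa\big(u_e(t_{n+1})-v_e^n\big)+\mathcal{R}_2 .
\end{equation*}
Adding the two half-step identities eliminates the implicit $\varDelta_b$-coupling at leading order and leaves a relation among $u_e(t_n)$, $u_e(t_{n+1})$ and $v_e^n$; I would then Taylor-expand $u_e$ about $t_{n+\frac12}$ and insert the PDE \eqref{ac}. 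The decisive point is that the $\mathcal{O}(\tau)$ residuals produced by the forward half-step and by the adjoint half-step have opposite signs and cancel — this is the mechanism by which composing \textbf{ESS1} with its adjoint raises the order. What remains is $\mathcal{O}(\tau^2+\varepsilon^2h^2)$ from the symmetric time- and space-truncation, plus the contributions of $\varepsilon^2\varDelta_b$ applied to the gap $g^n:=v_e^n-u_e(t_{n+\frac12})$ and to the forward-stage truncation residual $\rho^{(1)}_n$. Since $g^n$ solves the resolvent equation $(\tfrac{2}{\tau}+\kappa-\varepsilon^2\varDelta_b)\,g^n=-\rho^{(1)}_n$, estimating these terms with the crude operator bound $\|\varDelta_b\|=\mathcal{O}(h^{-2})$, the smoothness of $u_e$, and the step restriction $\tau=\mathcal{O}(h^2/\varepsilon^2)$ produces exactly the mixed terms $\varepsilon^2\tau^2/h$ and $\varepsilon^4\tau^2/h^3$; altogether $\|\mathcal{R}_2\|\le C(\varepsilon^4\tau^2/h^3+\varepsilon^2\tau^2/h+\tau^2+\varepsilon^2h^2)$.

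For the error recursion, set $e^n=u_e(t_n)-u^n$ and introduce the intermediate error $\eta^n=v_e^n-v^n$. Subtracting the two \textbf{SS2} half-steps from the corresponding identities for $u_e$, the first half-step yields an error equation of exactly the form handled in Theorem~\ref{conv_ess1} (with step size $\tau/2$) but \emph{without} a defect, while the second yields the \textbf{ESS1-adjoint} error equation with defect $\mathcal{R}_2$. I would take the discrete inner product of each with $\tfrac{\tau}{2}$ times the corresponding discrete time difference, use Lemma~\ref{dg_property} to convert the $\varDelta_a,\varDelta_b$ contributions into increments of $\tfrac{\varepsilon^2}{2}\|\nabla_h\cdot\|^2$, use the Lipschitz bound $L=\|f^\prime\|_{C[-\beta,\beta]}$ together with the maximum bound principle of Theorem~\ref{dmp_ss2} (which keeps $u^n$ and $v^n$ in $[-\beta,\beta]$, while $u_e(t_n)\in[-\beta,\beta]$ follows from the continuous maximum bound principle for \eqref{ac}), and apply the Cauchy--Schwarz and Young inequalities exactly as in \eqref{derivation_gronwall}. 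This gives $\|\eta^n\|_\kappa^2\le(1+C\tau)\|e^n\|_\kappa^2$ from the first half-step and $\|e^{n+1}\|_\kappa^2\le(1+C\tau)\|\eta^n\|_\kappa^2+C\tau\|\mathcal{R}_2\|^2$ from the second; the smallness conditions on $\tau$ in the statement are precisely what is needed to cancel the $\tfrac{1}{\tau}\|\cdot\|^2$ terms and to absorb the cross terms into the $\kappa\|\cdot\|^2$ part of the $\kappa$-norm, while $\tau\le 2h^2/(\kappa^2+2\varepsilon^2)$ and $\tau\le 2/(\kappa+L)$ also guarantee that Theorem~\ref{dmp_ss2} applies.

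Combining the two half-step estimates gives $\|e^{n+1}\|_\kappa^2\le(1+C\tau)\|e^n\|_\kappa^2+C\tau\|\mathcal{R}_2\|^2$, and since $\|e^0\|_\kappa=0$ the discrete Gronwall inequality yields $\|e^n\|_\kappa^2\le C\|\mathcal{R}_2\|^2$ uniformly for $n\le[T/\tau]$, which is \eqref{convergence_ineq_ss2}; the simplified estimate then follows by imposing $h\ge C\varepsilon^{4/3}$, which makes $\varepsilon^4\tau^2/h^3$ and $\varepsilon^2\tau^2/h$ both $\mathcal{O}(\tau^2)$. I expect the main obstacle to be the consistency step: one must simultaneously exhibit the cancellation of the $\mathcal{O}(\tau)$ residuals coming from the adjoint composition (which forces one to expand the two half-steps consistently about the common node $t_{n+\frac12}$ and to use the PDE) and control the composition of $\varepsilon^2\varDelta_b$ with the resolvent $(\tfrac{2}{\tau}+\kappa-\varepsilon^2\varDelta_b)^{-1}$, since $\varDelta_b$ is only $\mathcal{O}(h^{-2})$-bounded and non-symmetric. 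The energy/Gronwall part is then a routine adaptation of the argument proving Theorem~\ref{conv_ess1}.
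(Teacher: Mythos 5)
Your proposal follows essentially the same route as the paper: the same reference solution $v^n=\mathcal{S}(\tau/2)u_e(t_n)$ with the full defect $\mathcal{R}_2$ lumped into the adjoint half-step, the same bound $\|\mathcal{R}_2\|\le C(\varepsilon^4\tau^2/h^3+\varepsilon^2\tau^2/h+\tau^2+\varepsilon^2h^2)$ obtained by controlling $\varepsilon^2\varDelta_b$ applied to the gap $v^n-u_e(t_{n+\frac12})$ through the resolvent of $\varepsilon^2\varDelta_b$ (the paper does this via a Neumann-series expansion, which is the same mechanism as your resolvent identity), and the same two-stage $\kappa$-norm recursion reusing the \textbf{ESS1} argument followed by discrete Gronwall and the $h\ge C\varepsilon^{4/3}$ simplification. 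The only cosmetic discrepancy is your description of "cancelling $\mathcal{O}(\tau)$ residuals of opposite sign," since in this set-up the forward half-step carries no residual at all; the second-order accuracy instead emerges when the summed half-step equations are compared with the PDE at $t_{n+\frac12}$, exactly as in the paper.
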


	We introduce a reference solution $v^n \in \mathbb{M}_h$, such that
	\begin{equation}\label{ss2_ref}
		\begin{aligned}
			\frac{ v^n - u_e(t_n) }{\tau/2}   & = \varepsilon^2 (\varDelta_a u_e(t_n) + \varDelta_b v^n) + f(u_e(t_n)) - \kappa (v^n - u_e(t_n)),                            \\
			\frac{u_e(t_{n+1}) - v^n}{\tau/2} & = \varepsilon^2 (\varDelta_a u_e(t_{n+1}) + \varDelta_b v^n) + f(u_e(t_{n+1})) + \kappa(u_e(t_{n+1}) - v^n) + \mathcal{R}_2.
		\end{aligned}
	\end{equation}
	The following lemma provides the estimate of $\mathcal{R}_2$.
	\begin{lem}
		Suppose that the exact solution of \eqref{ac} satisfies $u_e(x, y, t) \in C^3([0,T]; C^4(\overline{\Omega}))$, then, 
		\begin{equation*}
			\|\mathcal{R}_2\| \leq C( \frac{\varepsilon^4\tau^2}{h^3} + \frac{\varepsilon^2\tau^2}{h} + \tau^2 + \varepsilon^2 h^2 ).
		\end{equation*}
	\end{lem}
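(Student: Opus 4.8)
The plan is to eliminate the internal stage $v^n$ from \eqref{ss2_ref}, reduce $\mathcal{R}_2$ to a closed formula in $u_e(t_n),u_e(t_{n+1})$ alone, and then Taylor-expand about $t_{n+1/2}$ while carefully tracking the terms that carry inverse powers of $h$. Rearranging the first line of \eqref{ss2_ref} and using $\varDelta_a+\varDelta_b=\varDelta_h$ gives $\big((\tfrac2\tau+\kappa)I-\varepsilon^2\varDelta_b\big)\big(v^n-u_e(t_n)\big)=\varepsilon^2\varDelta_h u_e(t_n)+f(u_e(t_n))=:\phi^n$, where (with the componentwise boundary modifications of \eqref{ess1_form2} understood) the matrix on the left is a lower-triangular $M$-matrix with diagonal $\tfrac2\tau+\kappa+2r$ and off-diagonal row mass $2r$; hence its inverse has $\|\cdot\|_\infty$-norm at most $(\tfrac2\tau+\kappa)^{-1}\le\tfrac\tau2$, and, writing $B:=\varepsilon^2\varDelta_b-\kappa I$, one has $v^n-u_e(t_n)=\tfrac\tau2\big(I-\tfrac\tau2 B\big)^{-1}\phi^n$. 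Substituting this into the second line of \eqref{ss2_ref} and collapsing $\varDelta_a+\varDelta_b$ once more produces the $v^n$-free identity
\begin{equation*}
\mathcal{R}_2=\tfrac2\tau\big(u_e(t_{n+1})+u_e(t_n)-2v^n\big)-\varepsilon^2\varDelta_a\big(u_e(t_{n+1})-u_e(t_n)\big)-\big[(\kappa u_e+f(u_e))(t_{n+1})-(\kappa u_e+f(u_e))(t_n)\big].
\end{equation*}

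I would then Taylor-expand $u_e$ about $t_{n+1/2}$ — this is where $u_e\in C^3([0,T];C^4(\overline{\Omega}))$ is used — together with the semi-discrete identity $\partial_t u_e=\varepsilon^2\varDelta_h u_e+f(u_e)$, which the exact solution satisfies up to the spatial consistency defect $\varepsilon^2(\varDelta_h-\Delta)u_e=O(\varepsilon^2 h^2)$; the continuous maximum bound principle keeps $\|u_e(\cdot)\|_\infty\le\beta$, so $f$ and its derivatives along $u_e$ are bounded by $L$ and every nonlinear Taylor remainder is controlled. Because the second line of \eqref{ss2_ref} is exactly the adjoint of the first, the composed propagator $\widetilde{\mathcal{S}}(\tau/2)\mathcal{S}(\tau/2)$ is time-symmetric; consequently its local error carries neither an $O(1)$ nor an $O(\tau)$ term — the would-be $O(\tau)$ contributions recombine (through $\varDelta_a+\varDelta_b=\varDelta_h$ and the differentiated semi-discrete identity) into $O(\varepsilon^2 h^2)$ — leaving the bare temporal remainder $O(\tau^2)$ (from $\partial_t^3 u_e$) together with $O(\varepsilon^2 h^2)$.

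The delicate part is isolating the remaining \emph{splitting} contributions, which carry the skew operator $S:=\tfrac12(\varDelta_b-\varDelta_a)$ (so $\varDelta_a=\tfrac12\varDelta_h-S$, $\varDelta_b=\tfrac12\varDelta_h+S$); $S$ behaves like $-h^{-1}\partial_x$ on smooth grid functions, with $\|S^k q\|\le C_k h^{-k}$ whenever $q$ is the restriction of a $C^{k+1}$ function, while $\|q\|,\|\varDelta_h q\|=O(1)$. The point to exploit is that $\varepsilon^2\varDelta_a$ and $\varepsilon^2\varDelta_b$ act on a \emph{smooth} grid function only as $O(\varepsilon^2/h)$, not as $O(\varepsilon^2/h^2)$; so one must not expand $(I-\tfrac\tau2 B)^{-1}$ crudely against a non-smooth argument, but instead write $(I-\tfrac\tau2 B)^{-1}\phi^n=\phi^n+\tfrac\tau2 B\phi^n+(\tfrac\tau2)^2 B(I-\tfrac\tau2 B)^{-1}B\phi^n$. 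The middle term feeds the $O(\varepsilon^2\tau^2/h)$ contribution — it is $\tfrac\tau2$ times $\varepsilon^2 S$ applied to a smooth function, and it gains one power of $\tau$ over the analogous term in the first-order defect $\mathcal{R}_1$ precisely because the $\varepsilon^2\varDelta_a$ factor in the identity above recombines with the $\varepsilon^2\varDelta_b$ part of $\tfrac\tau2 B\phi^n$ inside $v^n$ into the harmless $\varepsilon^2\varDelta_h$ at leading order — while the last term, bounded with one crude factor $\|B\|=O(\varepsilon^2/h^2+\kappa)$, one ``smooth'' factor $\|B\phi^n\|=O(\varepsilon^2/h+\kappa)$ and $\|(I-\tfrac\tau2 B)^{-1}\|_\infty\le1$, yields the $O(\varepsilon^4\tau^2/h^3)$ contribution. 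The step restriction $\tau\le\min\{\tfrac2{\kappa+L},\tfrac{2h^2}{\kappa^2+2\varepsilon^2},\dots\}$ (essentially $\tau\lesssim h^2/\varepsilon^2$) keeps every remaining Neumann tail geometric, since $\tfrac\tau2\varepsilon^2 S=O(h)$ in operator norm, and absorbs all further, higher-$h^{-1}$-power remainders (such as $\varepsilon^4\tau^2/h^2$) into the four named quantities because $h<1$. Collecting these four pieces yields $\|\mathcal{R}_2\|\le C(\varepsilon^4\tau^2/h^3+\varepsilon^2\tau^2/h+\tau^2+\varepsilon^2 h^2)$.

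The main obstacle is exactly this bookkeeping. A naive Neumann expansion of $(I-\tfrac\tau2 B)^{-1}$ estimated term-by-term with $\|B\|\sim\varepsilon^2/h^2$ is hopelessly lossy — it does not even reproduce consistency — so one must simultaneously use (i) the gain of a power of $h$ by $\varepsilon^2\varDelta_a,\varepsilon^2\varDelta_b$ on smooth arguments, (ii) the operator-norm smallness $\tfrac\tau2\varepsilon^2 S=O(h)$ under the CFL to control the tails, and (iii) the adjoint/symmetry of the composition to cancel the first-order splitting errors, all while verifying directly from the lower-triangular ($M$-matrix) structure of the boundary-modified $\varepsilon^2\varDelta_b$ that $\|\big((\tfrac2\tau+\kappa)I-\varepsilon^2\varDelta_b\big)^{-1}\|\le\tfrac\tau2$.
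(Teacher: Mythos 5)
Your proposal is correct in substance and rests on the same two pillars as the paper's proof: represent $v^n$ through the resolvent of the lower-triangular operator ($v^n-u_e(t_n)=\frac{\tau}{2}(I-\frac{\tau}{2}B)^{-1}\phi^n$, with the CFL condition $\tau\lesssim h^2/\varepsilon^2$ making the Neumann expansion legitimate), and exploit that $\varepsilon^2\varDelta_a,\varepsilon^2\varDelta_b$ lose only one power of $h$ on smooth grid functions while crude applications lose two, which is exactly how the $\varepsilon^2\tau^2/h$ and $\varepsilon^4\tau^2/h^3$ terms arise. Where you differ is the bookkeeping: the paper \emph{adds} the two half-step equations in \eqref{ss2_ref} and compares the result with the PDE at $t_{n+1/2}$, splitting the defect into five explicit pieces $\mathcal{T}_1,\dots,\mathcal{T}_5$ so that the resolvent machinery ($\mathscr{P}_h,\mathscr{Q}_h$ and the bound $\|(I-\frac{\tau}{2}\mathscr{L}_b)^{-1}\|_\infty\le 2$) is needed only for $\mathcal{T}_5=\mathscr{L}_b(u_e(t_{n+\frac12})-v^n)$; you instead \emph{subtract} the equations to isolate $\mathcal{R}_2$, substitute the resolvent formula, and obtain the first-order cancellation from $\varDelta_a+\varDelta_b=\varDelta_h$ together with the time-differentiated equation $\partial_{tt}u_e=\varepsilon^2\Delta\partial_tu_e+f'(u_e)\partial_tu_e$, which indeed reduces the $O(\tau)$ bracket to $\tau\varepsilon^2(\Delta-\varDelta_h)\partial_tu_e=O(\tau\varepsilon^2h^2)$ plus harmless terms; the paper's route gets the same cancellation "for free" from the midpoint comparison. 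Your appeal to time-symmetry of $\widetilde{\mathcal{S}}(\tau/2)\mathcal{S}(\tau/2)$ is dispensable (and not used by the paper) since the concrete algebraic cancellation you also describe is what actually does the work. Two points deserve the same care the paper gives them: the boundary-modified rows of $\varDelta_a,\varDelta_b$ act as $O(1/h^2)$ even on smooth data, so your "smooth factor" bound $\|B\phi^n\|=O(\varepsilon^2/h+\kappa)$ holds only at interior rows and the $O(1/h)$ exceptional rows must be accounted for separately in the discrete $L^2$ norm (the paper does this by listing separate pointwise orders for $i,j\in\{0,1,M-1\}$ in $\mathcal{T}_4,\mathcal{T}_5$); and your resolvent bound via the strictly diagonally dominant $M$-matrix structure is fine and parallels the paper's $\ell^\infty$ bound, but should be stated for the 2D Kronecker-structured operator rather than the 1D matrix. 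Neither point changes the outcome; your sketch, once the asserted Taylor/Neumann bookkeeping is written out, yields the stated estimate $\|\mathcal{R}_2\|\le C(\varepsilon^4\tau^2/h^3+\varepsilon^2\tau^2/h+\tau^2+\varepsilon^2h^2)$ by essentially the same mechanism as the paper.
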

\begin{proof}
		To simplify the presentation, we set $\kappa = 0$ and define
		\begin{equation*}
			\mathscr{L} := \varepsilon^2 \Delta \quad  \text{and} \quad \mathscr{L}_\alpha := \varepsilon^2 \Delta_\alpha, \ \alpha = a,b,h, \quad 
		\end{equation*}
		Adding equations in \eqref{ss2_ref} together yields 
		\begin{equation*}
			\frac{u_e(t_{n+1}) - u_e(t_n)}{\tau} = \mathscr{L}_a \frac{u_e(t_n) + u_e(t_{n+1})}{2} + \mathscr{L}_b v^n + \frac{f(u_e(t_n)) + f(u_e(t_{n+1}))}{2} + \frac{1}{2}\mathcal{R}_2.
		\end{equation*}
		Comparing it with \eqref{ac}, we have
		\begin{equation*}
			\frac{1}{2} \mathcal{R}_2 = \mathcal{T}_1 + \mathcal{T}_2 + \mathcal{T}_3 + \mathcal{T}_4 + \mathcal{T}_5,
		\end{equation*}
		where
		\begin{equation*}
			\begin{aligned}
				(\mathcal{T}_1)_{ij} &= \frac{ u_e(x_i, y_j, t_{n+1}) - u_e(x_i, y_j, t_n) }{\tau} - \partial_t u_e(x_i, y_j, t_{n+\frac{1}{2}}),  \\ 
				(\mathcal{T}_2)_{ij} &= \mathscr{L} u_e (x_i, y_j, t_{n+\frac{1}{2}}) - \mathscr{L}_h u_e(x_i, y_j, t_{n+\frac{1}{2}}), \\ 
				(\mathcal{T}_3)_{ij} &= f(u_e(x_i, y_j, t_{n+\frac{1}{2}})) - \frac{f(u_e(x_i, y_j, t_n)) + f(u_e(x_i, y_j, t_{n+1}))}{2}, \\
				(\mathcal{T}_4)_{ij} &= \mathscr{L}_a u_e(x_i, y_j, t_{n+\frac{1}{2}}) - \mathscr{L}_a \frac{u_e(x_i, y_j, t_n) + u_e(x_i, y_j, t_{n+1})}{2}, \\
				(\mathcal{T}_5)_{ij} &= \mathscr{L}_b u_e(x_i, y_j, t_{n+\frac{1}{2}}) - \mathscr{L}_b v_{ij}^n. \\ 
			\end{aligned}
		\end{equation*}
		An application of the Taylor's formula yields
		\begin{equation*}
			(\mathcal{T}_1)_{ij} \leq \frac{\tau^2}{24} \| u_e \|_{C^3([0, T]; C(\overline{\Omega}))}, \
			(\mathcal{T}_2)_{ij} \leq \frac{\varepsilon^2h^2}{6} \|u_e \|_{C([0, T], C^4(\overline{\Omega})) }, \
			(\mathcal{T}_3)_{ij}  \leq \tau^2 \|f\|_{C^2([-1, 1])} \|u_e\|_{C^2([0,T]; C(\overline{\Omega}))}.
		\end{equation*}
		\begin{equation*}
			(\mathcal{T}_4)_{ij}  = 
			\left\lbrace
			\begin{aligned}
				&\mathcal{O}(\varepsilon^2\tau^2/h), \quad && 0 < i,j < M-1, \\ 
				&\mathcal{O}(\varepsilon^2\tau^2/h^2), \quad && i,j=0, M-1.
			\end{aligned}
			\right.
		\end{equation*}
		To estimate $\mathcal{T}_5$, we introduce
		\begin{equation*}
			\begin{aligned}
				&\mathscr{P}_h = ( I - \frac{\tau}{2}\mathscr{L}_b )^{-1} ( I + \frac{\tau}{2}\mathscr{L}_a ) - I = \frac{\tau}{2} \mathscr{L}_h + \sum\limits_{k=2}^{\infty} \frac{\tau^k}{2^k} \mathscr{L}_b^{k-1}\mathscr{L}_h, \\ 
				&\mathscr{Q}_h = \frac{\tau}{2} ( I - \frac{\tau}{2}\mathscr{L}_b )^{-1} - \frac{\tau}{2} I = \sum\limits_{k=2}^\infty \frac{\tau^k}{2^k} \mathscr{L}_b^{k-1}.
			\end{aligned}
		\end{equation*}
		Since $\tau < 2h^2/\varepsilon^2$, it is readily to confirm that the spectral radius $\rho(\tau \mathscr{L}_b/2)<1$, and the above expansions make sense. The first equation of \eqref{ss2_ref} then becomes
		\begin{equation}\label{solut_v}
			v^n = \mathscr{P}_h u_e(t_n) + \mathscr{Q}_h f(u_e(t_n)) + u_e(t_n) + \frac{\tau}{2} f(u_e(t_n)).
		\end{equation}
		It holds by combining \eqref{solut_v} with the fact
		\begin{equation*}
			\begin{aligned}
				u_e(t_{n+\frac{1}{2}}) &= u_e(t_n) + \frac{\tau}{2} \partial_t u_e(t_n) + \int_{t_n}^{t_{n+\frac{1}{2}}} (s - t_n) \partial_{tt} u_e(s) ds \\ 
				&= u_e(t_n) + \frac{\tau}{2} \mathscr{L} u_e(t_n) + \frac{\tau}{2} f(u_e(t_n)) + \int_{t_n}^{t_{n+\frac{1}{2}}} (s - t_n) \partial_{tt} u_e(s) ds,
			\end{aligned}
		\end{equation*}
		that
		\begin{equation*}
			\begin{aligned}
				\mathcal{T}_5 &= \int_{t_n}^{t_{n+\frac{1}{2}}} (s-t_n) \mathscr{L}_b \partial_{tt} u_e(s) ds - \sum\limits_{k=2}^\infty (\frac{\tau \mathscr{L}_b}{2})^k  \partial_t u_e(t_n) \\ 
				&= \int_{t_n}^{t_{n+\frac{1}{2}}} (s-t_n) \mathscr{L}_b \partial_{tt} u_e(s) ds - (I - \frac{\tau \mathscr{L}_b}{2})^{-1} (\frac{\tau \mathscr{L}_b}{2})^2 \partial_t u_e(t_n).
			\end{aligned}
		\end{equation*}
		There is no difficulty in confirming that for any $v \in \mathbb{M}_h$, 
		\begin{equation*}
			\| (I - \frac{\tau \mathscr{L}_b}{2})^{-1} v \|_\infty \leq 2 \|v\|_\infty.
		\end{equation*}
		Furthermore, by the Taylor's formula and some straightforward calculations, we have
		\begin{equation*}
			(\mathcal{T}_5)_{ij} =
			\left\lbrace
			\begin{aligned}
				&\mathcal{O}(\varepsilon^2\tau^2/h + \varepsilon^4\tau^2/h^2), && \quad 1 < i,j < M-1, \\
				&\mathcal{O}(\varepsilon^2\tau^2/h^2 + \varepsilon^4 \tau^2/h^4), && \quad i,j = 0,1,M-1.
			\end{aligned}
			\right.
		\end{equation*}
		Combining all estimates, we obtain 
		\begin{equation*}
			\|\mathcal{R}_2\| \leq C( \frac{\varepsilon^4\tau^2}{h^3} + \frac{\varepsilon^2\tau^2}{h} + \tau^2 + \varepsilon^2 h^2 ),
		\end{equation*}
		which completes the proof.
\end{proof}
\begin{rmk}
    We remark here that the presence of $\mathcal{O}( \frac{\varepsilon^4 \tau^2}{h^2} )$ term in the truncation error is primarily introduced by the boundary points. This is due to the modifications made at the boundary in the extended Saul'yev methods.
\end{rmk}
\begin{proof}
	By the proof of the Theorem.~\ref{mbp_ess1}, it is straightforward to confirm that the intermediate solution remain satisfies $\|v^n\|_\infty \leq \beta$.  We define the solution errors as
	\begin{equation*}
		e^n = u_e(t_n)  - u^n, \quad \psi^n = v^n - \mathcal{S}(\tau/2) u^n,
	\end{equation*}
	which satisfy the following error equations, obtained by making the difference between \eqref{ss2_ref} and the \textbf{SS2} method
	\begin{equation}\label{error_ss2}
		\begin{aligned}
			\frac{\psi^n - e^n}{\tau / 2}   & = \varepsilon^2 (\varDelta_a e^n + \varDelta_b \psi^n) + f(u_e(t_n)) - f(u^n) - \kappa(\psi^n - e^n),                                  \\
			\frac{e^{n+1} - \psi^n}{\tau/2} & = \varepsilon^2 (\varDelta_a e^{n+1} + \varDelta_b \psi^n) + f(u_e(t_{n+1})) - f(u^{n+1}) + \kappa (e^{n+1} - \psi^n) + \mathcal{R}_2.
		\end{aligned}
	\end{equation}
	The first equation in \eqref{error_ss2} and the proof of Theorem.~\ref{conv_ess1} give us
	\begin{equation*}
		\|\psi^n\|^2_\kappa - \|e^n\|^2_\kappa \leq \frac{(2\kappa + L)^2}{4} \tau (\|\psi^n\|_\kappa^2 + \|e^n\|_\kappa^2).
	\end{equation*}
	Under the given step size restriction, it holds that
	\begin{equation}\label{ss2_gronwall1}
		\|\psi^n\|^2_\kappa \leq (1 + \frac{(2\kappa + L)^2}{2}\tau)\|e^n\|_\kappa^2.
	\end{equation}
	Rearranging the second equation of \eqref{error_ss2}, we obtain
	\begin{equation}\label{error_ss2_rearrange}
		\begin{aligned}
			 & \frac{e^{n+1} - \psi^n}{\tau/2} - \varepsilon^2 (\varDelta_a e^{n+1} + \varDelta_b \psi^n) + \kappa (e^{n+1} - \psi^n) \\
			 & \quad = f(u_e(t_{n+1})) - f(u^{n+1}) + 2\kappa (e^{n+1} - \psi^n) + \mathcal{R}_2.
		\end{aligned}
	\end{equation}
	Taking the discrete inner product on both sides of \eqref{error_ss2_rearrange}  by $e^{n+1} - \psi^n$, using the property $(a - b)^2 = a^2 - b^2 + 2b(b-a)$, we have
	\begin{equation*}
		\begin{aligned}
			 & \frac{2}{\tau}\|e^{n+1} - \psi^n\|^2 + \|e^{n+1}\|_\kappa^2 - \|\psi^n\|_\kappa^2 = \langle f(u_e(t_{n+1})) - f(u^{n+1}), e^{n+1} - \psi^n \rangle   \\
			 & \hspace{1cm} + 2\kappa\| e^{n+1} - \psi^n\|^2 + 2\kappa\langle \psi^n,  e^{n+1} - \psi^n \rangle + \langle \mathcal{R}_2, e^{n+1} - \psi^n \rangle .
		\end{aligned}
	\end{equation*}
	Using the Cauchy-Schwarz and Young's inequalities to further simplify the above equation, it holds that
	\begin{equation*}
		\begin{aligned}
			 & \frac{2}{\tau}\|e^{n+1} - \psi^n \|^2 + \|e^{n+1}\|_\kappa^2 - \| \psi^n \|_\kappa^2                                                                \\
			 & \quad \leq \big( (L + 2\kappa) \|e^{n+1}\| + 4\kappa\|\psi^n\| + \|\mathcal{R}_2\| \big) \|e^{n+1} - \psi^n\|                                       \\
			 & \quad  \leq \frac{\tau}{2} \big( (L + 2\kappa) \|e^{n+1}\| + 4\kappa\|\psi^n\| + \|\mathcal{R}_2\| \big)^2 + \frac{2}{\tau} \|e^{n+1} - \psi^n\|^2  \\
			 & \quad  \leq 2 (L + 4\kappa)^2 \tau (\|e^{n+1}\|_\kappa^2 + \|\psi^n\|_\kappa^2) + \tau \|\mathcal{R}_2\|^2 + \frac{2}{\tau} \|e^{n+1} - \psi^n\|^2,
		\end{aligned}
	\end{equation*}
	that is,
	\begin{equation*}
		\|e^{n+1}\|_\kappa^2 - \|\psi^n\|_\kappa^2  \leq  2(L + 4\kappa)^2\tau (\|e^{n+1}\|_\kappa^2 + \|\psi^n\|_\kappa^2) + \tau \|\mathcal{R}_2\|^2.
	\end{equation*}
	The provided time step restriction then leads to
	\begin{equation}\label{ss2_gronwall2}
		\|e^{n+1}\|_\kappa^2 \leq (1 + 4(L + 4\kappa)^2\tau )\|\psi^n\|_\kappa^2 + 2\tau \|\mathcal{R}_2\|^2.
	\end{equation}
	It holds by inserting \eqref{ss2_gronwall1} into \eqref{ss2_gronwall2} and dropping some useless terms that
	\begin{equation}\label{grownwall}
					\|e^{n+1}\|_\kappa^2  \leq ( 1 + C\tau )\|e^n\|_\kappa^2  + C \tau \|\mathcal{R}_2\|^2.
	\end{equation}
	Solving the recursive formula \eqref{grownwall}, we finally get
	\begin{equation*}
		\begin{aligned}
						\|e^n\|^2_\kappa & \leq \exp{(1 + C n \tau)} \|e^0\|^2_\kappa + C \tau \sum\limits_{j=0}^{n-1} (1 + C\tau)^j \|\mathcal{R}_2\|^2 \\
			                 & \quad \leq C (\frac{\varepsilon^4 \tau^2}{h^3} + \frac{\varepsilon^2 \tau^2}{h} + \tau^2 + \varepsilon^2 h^2)^2.   \\
		\end{aligned}
	\end{equation*}
	Consequently, the convergent result \eqref{convergence_ineq_ss2} holds. The proof is thus completed.
\end{proof}

\section{Numerical experiments}\label{numerical_experiments}
In this section, we demonstrate numerical performance of the proposed methods in terms of computational efficiency, DEDL, and DMP for simulating the AC equation \eqref{ac} with nonlinear terms given by \eqref{f_double_well} and \eqref{f_log}. For the nonlinear term induced by the double-well potential \eqref{f_double_well}, we have $\beta = 1$ and set the stabilized parameter $\kappa = \|f^\prime\|_{C[-\beta, \beta]} = 2$. For the nonlinear term \eqref{f_log}, we fix $\theta = 0.8$ and $\theta_c = 1.6$. Then $\beta \approx 0.9575$ will be the positive root of $f^\prime(\xi) = 0$, and the stabilized parameter is thus specified as $\kappa = \|f^\prime\|_{C[-\beta, \beta]} \approx 8.02$. The nonlinear system of \eqref{ess1_adjoint_form2} is solved using the Newton's method with a tolerance threshold set to $10^{-12}$, which is sufficient for mediocre accuracy requirements.

Let $\Omega = (0, 1)^2$ and $\varepsilon = 0.01$, we begin with verifying the convergence rate of proposed methods against both spatial and time steps by specifying the initial condition $u_0(x, y) = 0.1 \sin{(2\pi x)} \sin{(2\pi y)}.$ Since the exact solution is unavailable, we use the solution produced by \textbf{SS2} method with step sizes $h = 1/1024$ and $\tau = 10^{-6}$ as a reference to better calculate errors in the refinement tests. We systematically adjust $h$, $\tau$ and integrate the AC equation until $t = 1$ using various methods. The numerical and reference solutions at $t=1$ are compared, and their errors in the discrete $L^2$ norm are computed.
\begin{figure}[H]
	\centering
	\subfigure{\label{sinesine_doublewell_time_accuracy}
		\includegraphics[width = 0.45\linewidth]{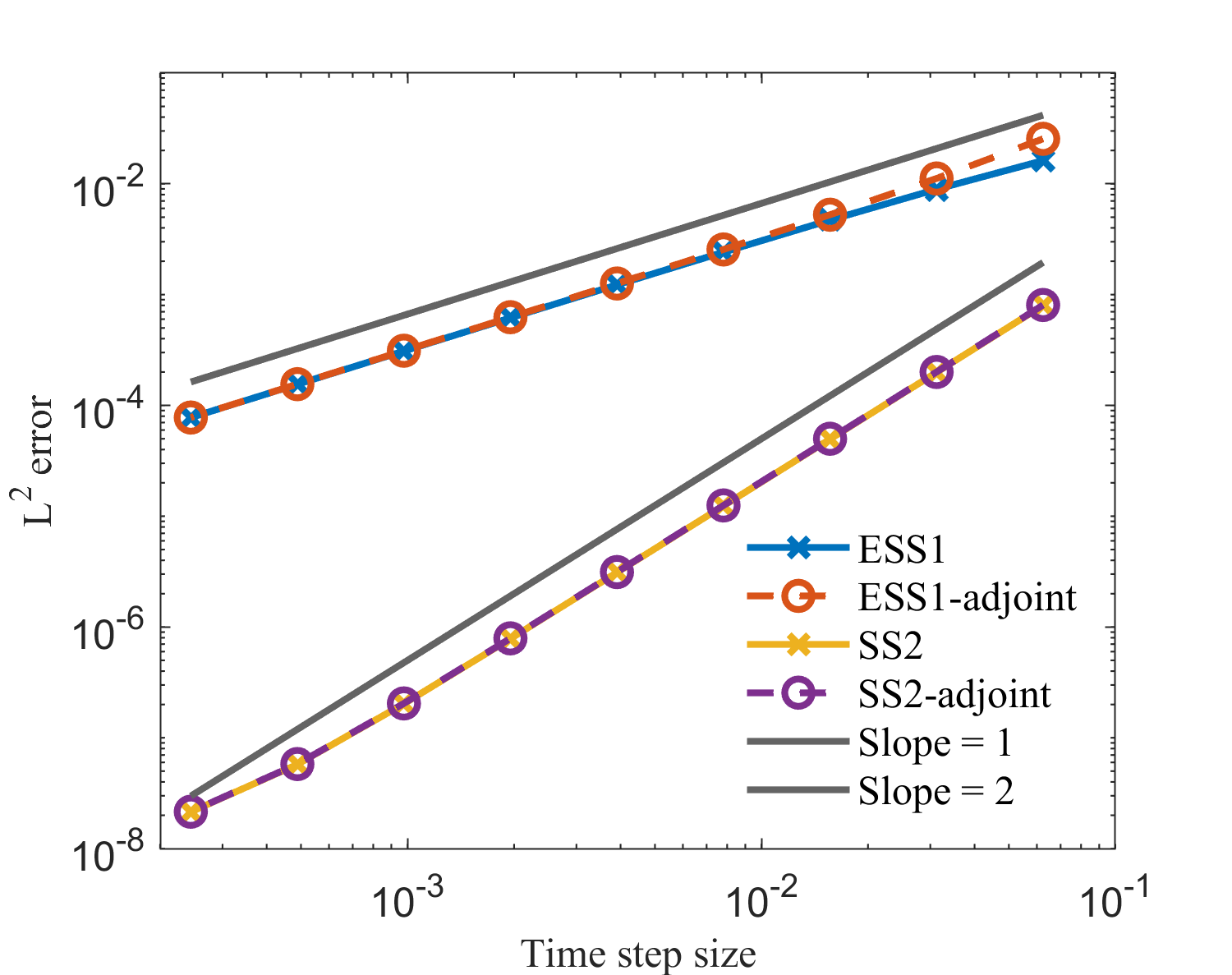}
	}
	\subfigure{\label{sinesine_log_time_accuracy}
		\includegraphics[width = 0.45\linewidth]{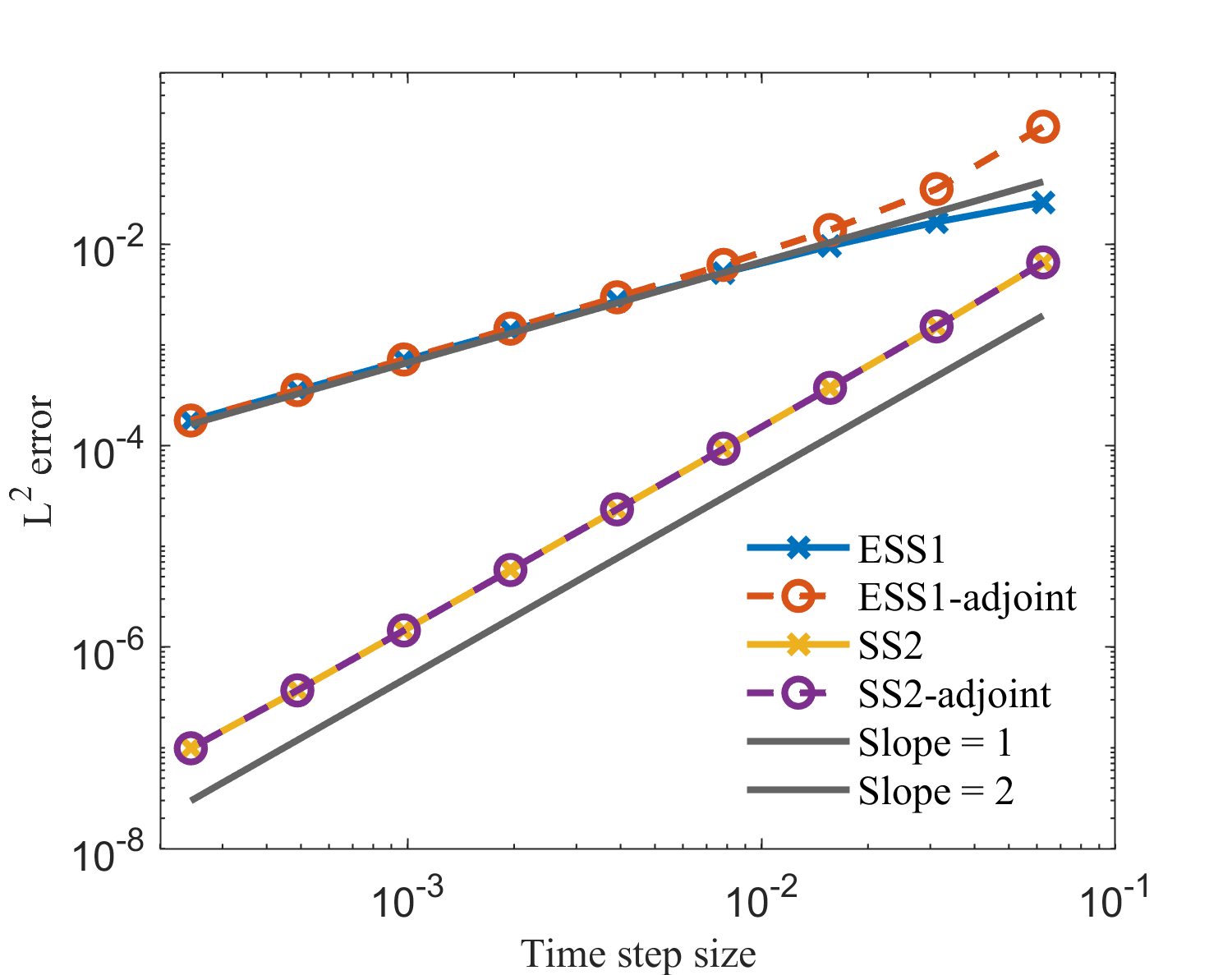}
	}
	\caption{ \label{sinesine_time_accuracy} The $L^2$ error vs the time step size computed by different methods for the double-well potential (left) and the Flory-Huggins potential (right).}
\end{figure}
For the temporal accuracy test, we fix the integration time at $t=1$ and set $h = 1/512$ such that the error caused by the spatial discretization is negligible. To execute the refinement tests in time, we vary $\tau = 2^{-k}$ with $k$ ranging from $4$ to $12$. The convergence results for solving the AC equation with double-well and the Flory-Huggins potential are respectively reported in the left and right subplots of Figure \ref{sinesine_time_accuracy}. The first- and second-order accuracy of $\textbf{ESS1}$ and \textbf{SS2} methods, as well as their adjoint methods, are observed as expected in both circumstances. We also discover that the truncation error is marginally larger while solving \eqref{ac} with Flory-Huggins potential due to a larger choice of the stabilized parameter $\kappa$.

\begin{figure}[H]
	\centering
	\includegraphics[width = 0.45\linewidth]{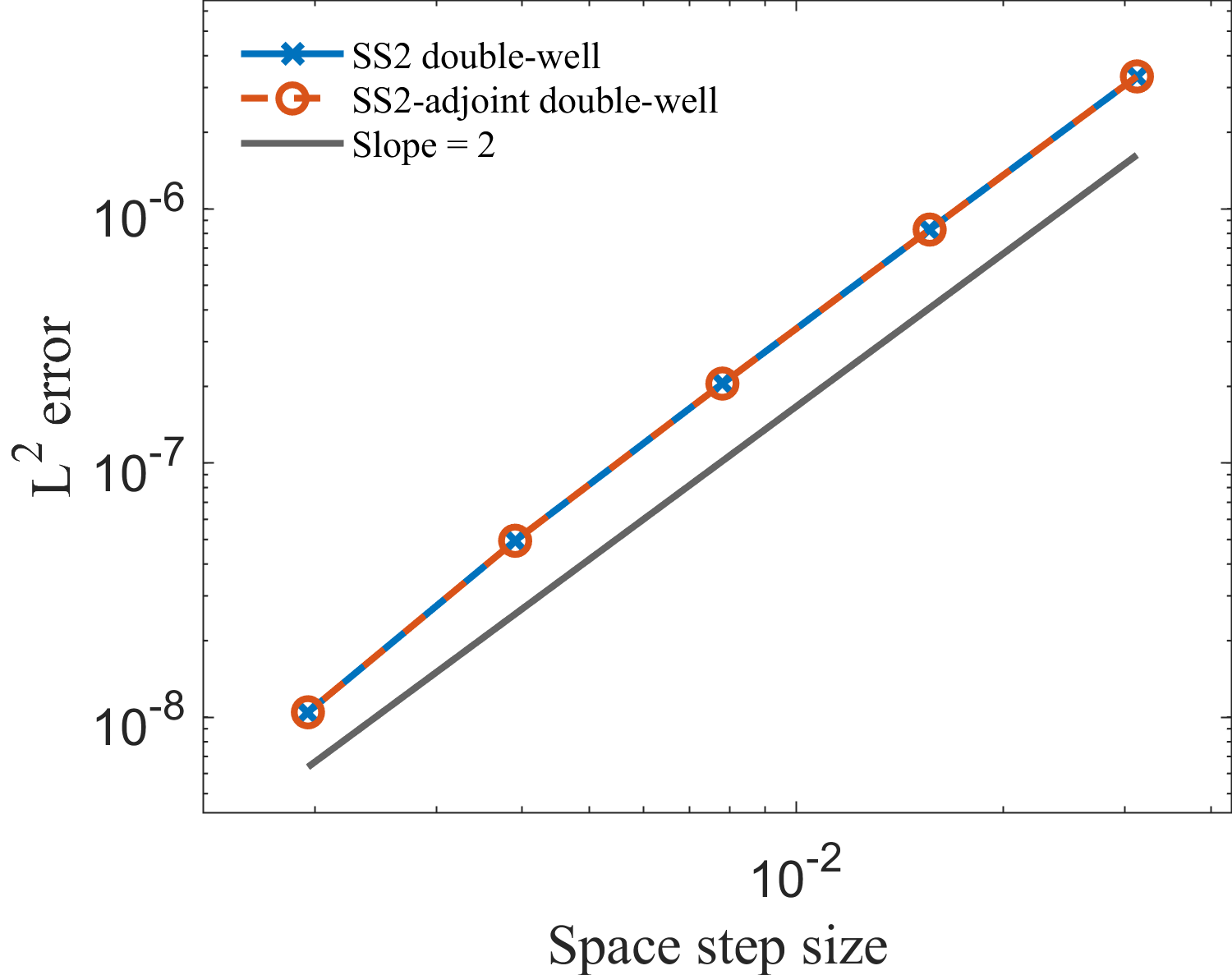}
	\includegraphics[width = 0.45\linewidth]{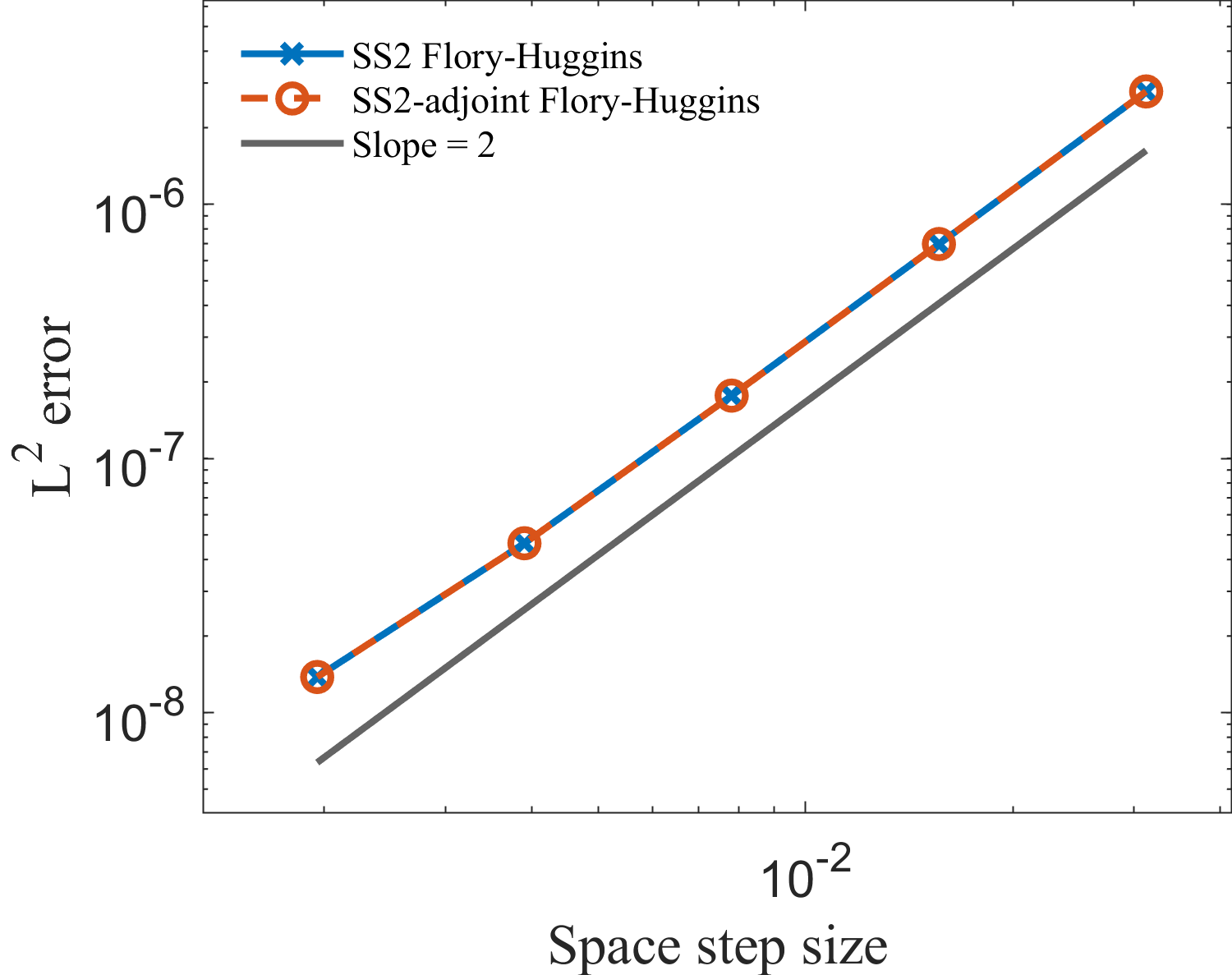}
	\caption{ \label{sinesine_spatial_accuracy} The $L^2$ error vs the space step size computed by the second-order methods for the double-well potential (left) and the Flory-Huggins potential (right).}
\end{figure}
In the spatial convergence test, we also set the final time to $t = 1$ with a fixed time step of $\tau = 2^{-14}$. We use \textbf{SS2} and its adjoint methods to integrate the governing system with respect to the double-well and Flory-Huggins potentials to eliminate temporal error. Afterward, spatial mesh refinement tests are conducted by verifying $h = 2^{-k}$ for $k = 4, 5, \cdots, 10$. Figure \ref{sinesine_spatial_accuracy} depicts the $L^2$ error as a function of the space step size in a logarithmic scale. The desired second-order accuracy in space can be summarized obviously.

To demonstrate the efficiency of our methods, we compare them with the \textbf{SSI1} method in \cite{tang_imex} and the \textbf{CN/AB-Stab} method in \cite{cn_ab} by solving the AC equation with double-well potential. Matlab is used to compute \textbf{SSI1} and \textbf{CN/AB-Stab} methods, whereas our proposed methods are programmed throughout Cython \cite{Cython_article} on the laptop with an eight-core Intel 2.30 GHz Processor and 64 GB Memory.
\begin{rmk}
Due to the extensive use of loops in our algorithm, we opted to write it in Cython to exploit its advantages. Additionally, considering Matlab's superior efficiency in computing FFT compared to Python, we have selected Matlab as the preferred platform for implementing schemes reliant on FFT algorithms.
\end{rmk}

\begin{figure}[H]
	\centering
	\subfigure{
		\includegraphics[width = 0.45\linewidth]{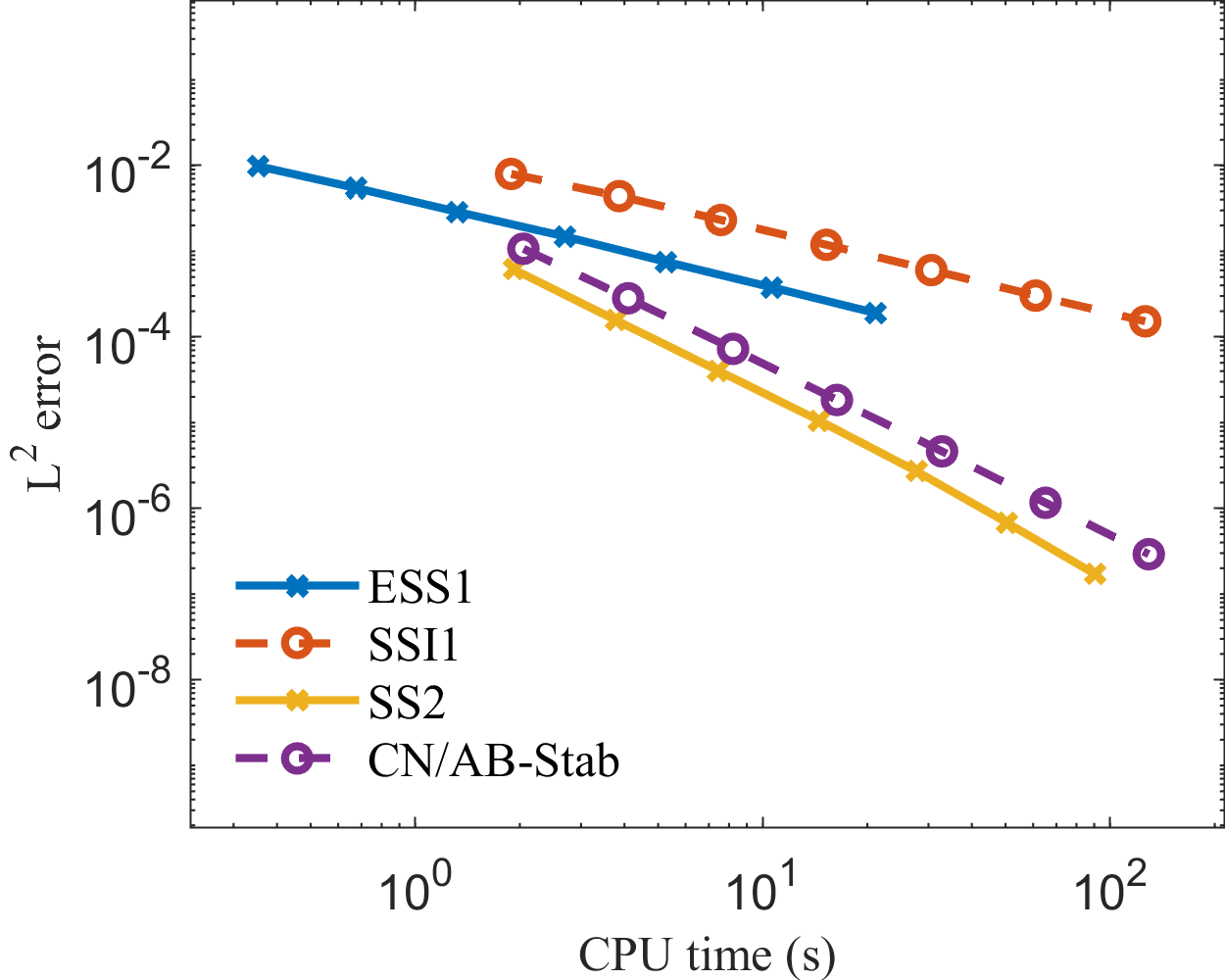}
	}
	\caption{ \label{cpu_error} The $L^2$ error vs CPU times computed by different methods.}
\end{figure}
Figure \ref{cpu_error} summarizes the $L^2$ error versus the CPU times. The test is conducted with a fixed space step size and final time of $h = 1/2048$ and $t = 1$, respectively. We set $\tau = 2^{-k}$, with $k$ varying systematically from $4$ to $10$. It is readily to confirm that the provided \textbf{ESS1} and \textbf{SS2} methods are efficient than \textbf{SSI1} and \textbf{CN/AB-Stab} methods.

\begin{figure}[H]
	\centering
	 \includegraphics[width = 0.45\linewidth]{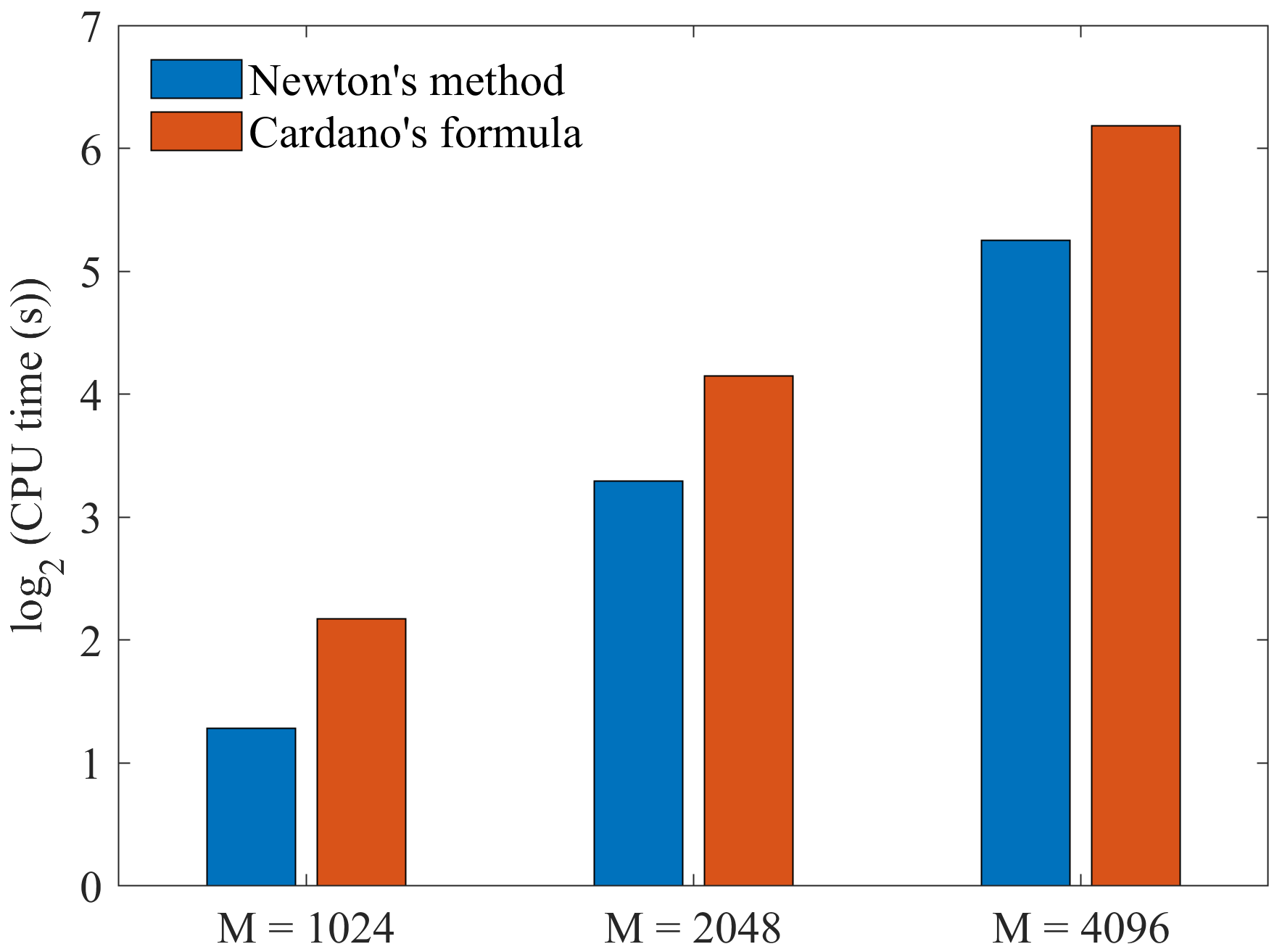}
    \caption{ The CPU time vs the mesh size for the \textbf{ESS1-adjoint} method solved by Newton's method and Cartano's formula.}
    \label{cpu_newton_cardano}
\end{figure}
 We explore the efficiency problem discussed in Remark \ref{nonlinear_double_well} by fixing the time step to $\tau = 10^{-2}$ and solving the nonlinear system for the \textbf{ESS1-adjoint} method until $t = 1$ using Newton's method and Cardano's formula, respectively. In Figure \ref{cpu_newton_cardano}, the computational cost is depicted as a function of $M$ on a logarithmic scale. It appears that Newton's method is more effective than Cardano's formula. 

We solve the AC equation with $\Omega = (0, 2\pi)^2$ and $\varepsilon = 0.05$ in \eqref{ac} to confirm the DEDL and DMP of the proposed methods. The following initial value consists of eight circles, whose centers and radii are specified in Table \ref{eight_center_radii}.
\begin{equation}\label{eight_initial}
	u_0(x, y) = - 0.2 + 0.2 \cdot \sum\limits_{i=1}^8 g\left(\sqrt{(x - x_i)^2 + (y-y_i^2)} - r_i\right),
\end{equation}
where
\begin{equation*}
	g(\xi) =
	\left\lbrace
	\begin{aligned}
		 & 2 e^{-\varepsilon^2/\xi^2},  && \text{if} \ \xi < 0, \\
		 & 0,                           && \text{otherwise}.
	\end{aligned}
	\right.
\end{equation*}
\begin{table}[H]
	\begin{center}
		\caption{Centers $(x_i, y_i)$ and radii of circles in \eqref{eight_initial}.}
		\label{eight_center_radii}
		\begin{tabular}{c | c c c c c c c c}
			\hline
			$i$     & $1$     & $2$        & $3$       & $4$       & $5$          & $6$      & $7$        & $8$         \\
			\hline
			$x_i$ & $\pi/2$ & $\pi/4$  & $\pi/2$  & $\pi$   & $49\pi/40$ & $\pi$   & $3\pi/2$ & 5         \\
			$y_i$ & $\pi/2$ & $3\pi/4$ & $5\pi/4$ & $\pi/4$ & $\pi/4$    & $\pi$   & $3\pi/2$ & 3         \\
			$r_i$ & $\pi/5$ & $\pi/10$ & $\pi/10$ & $\pi/8$ & $\pi/8$    & $\pi/4$ & $\pi/4$  & $2\pi/15$ \\
			\hline
		\end{tabular}
	\end{center}
\end{table}
We employ a variety of different methods to integrate the AC equation up to $t = 30$ by specifying $h = \pi/256$ and $\tau = 0.01$, so as to satisfy the step size requirements in the theoretical analysis. 
\begin{figure}[H]
	\centering
	\subfigure{
		\includegraphics[width=0.40\linewidth]{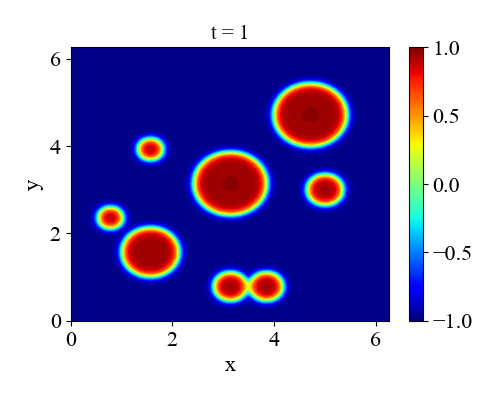}
		\includegraphics[width=0.40\linewidth]{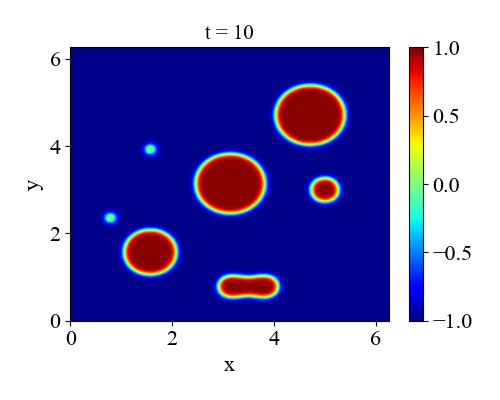}
	}
	\subfigure{
		\includegraphics[width=0.40\linewidth]{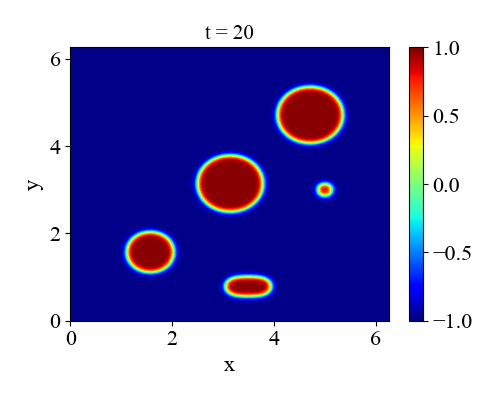}
		\includegraphics[width=0.40\linewidth]{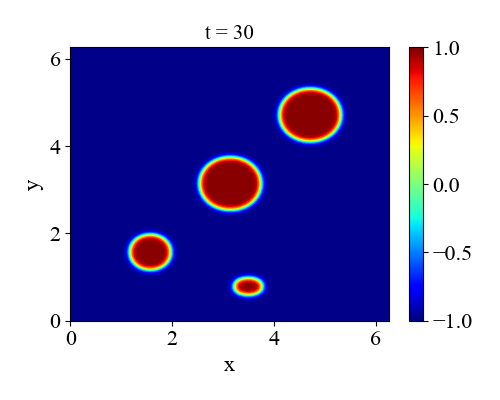}
	}
	\caption{ \label{phase_field_eight_circles} Numerical solutions of the eight circles example for the AC equation with double-well potential solved by the \textbf{SS2}.}
\end{figure}
\begin{figure}[H]
	\centering
	\subfigure{
		\includegraphics[width=0.45\linewidth]{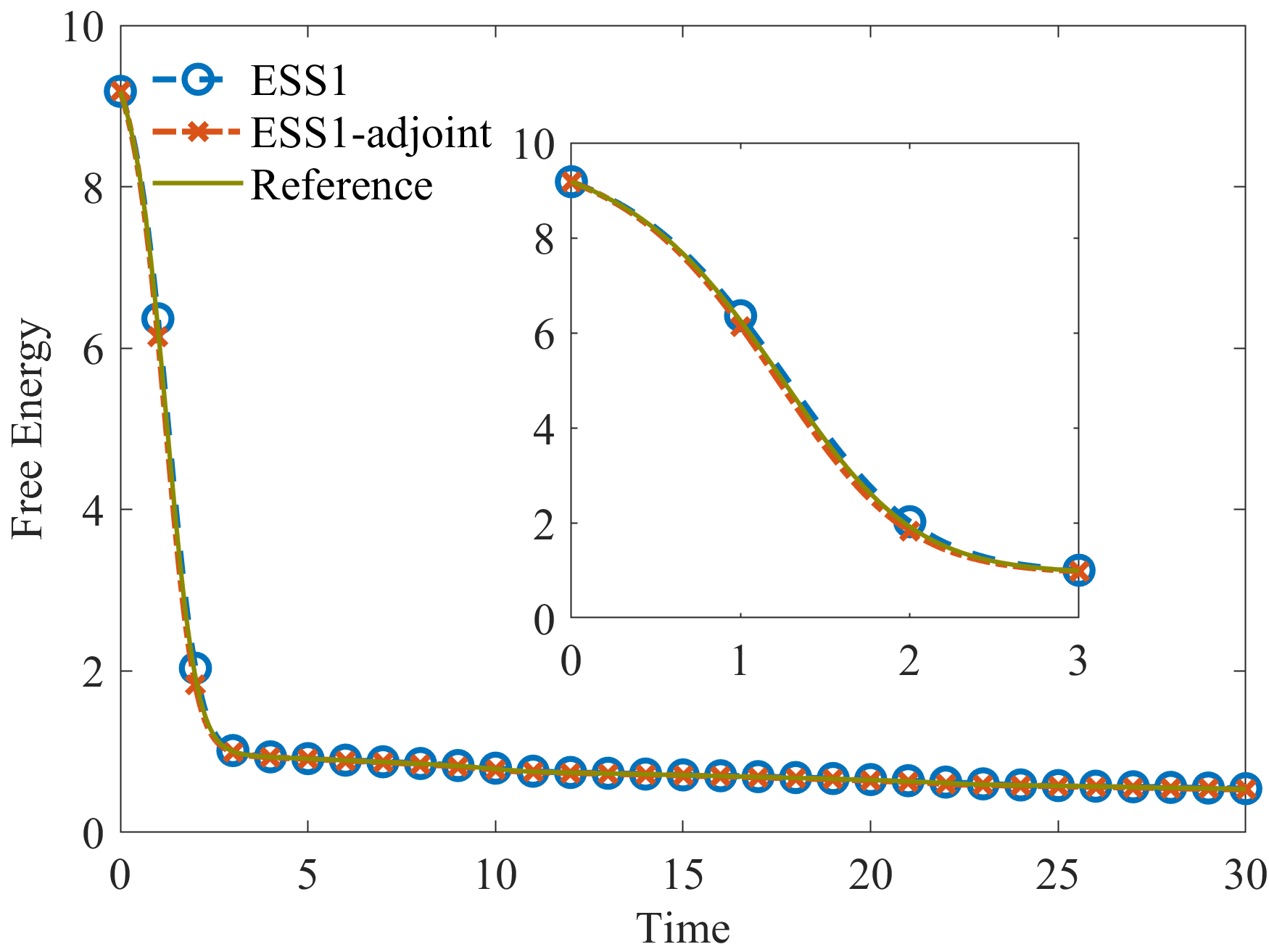}\quad
		\includegraphics[width=0.45\linewidth]{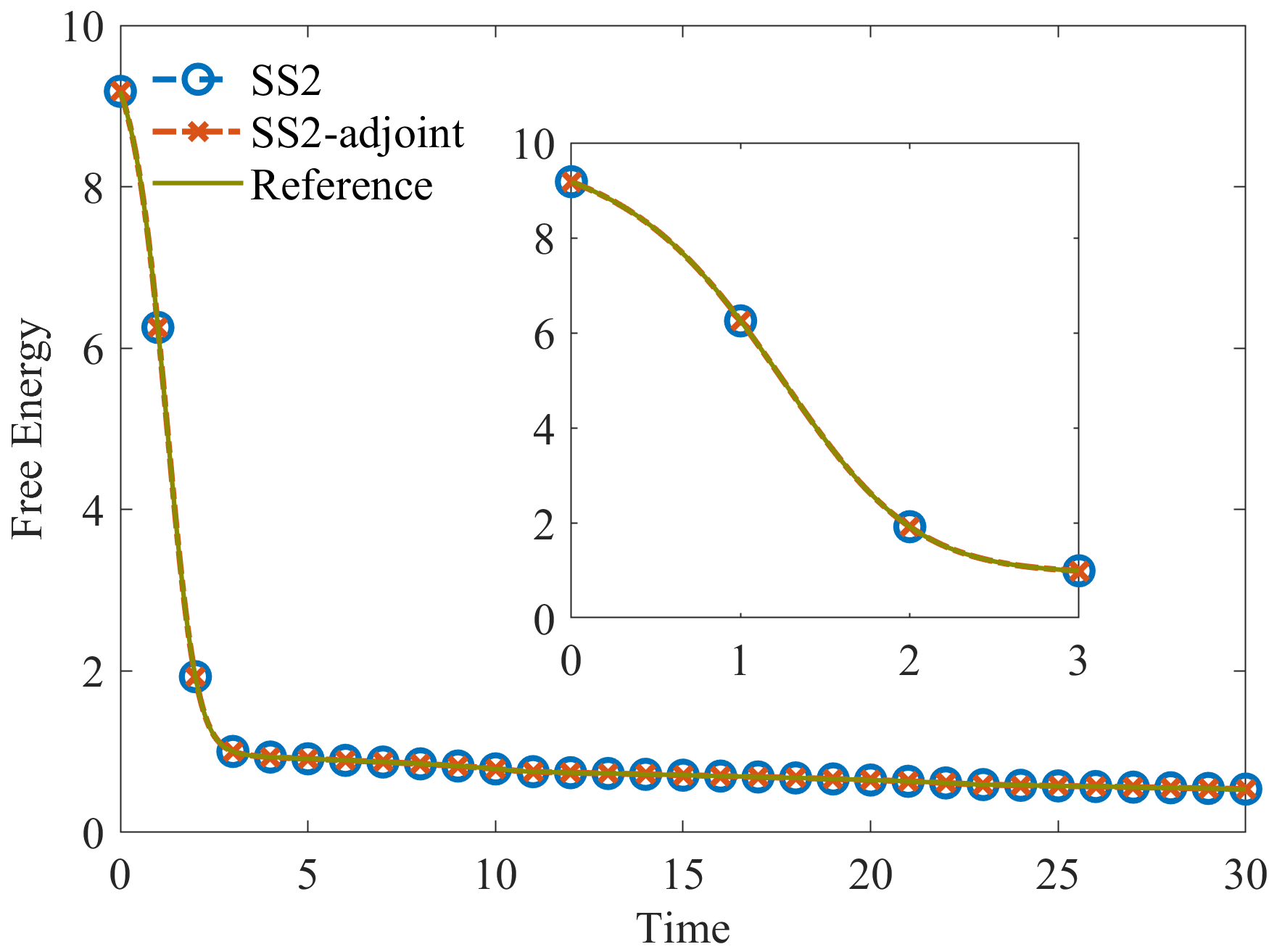}
	}

	\subfigure{
		\includegraphics[width=0.45\linewidth]{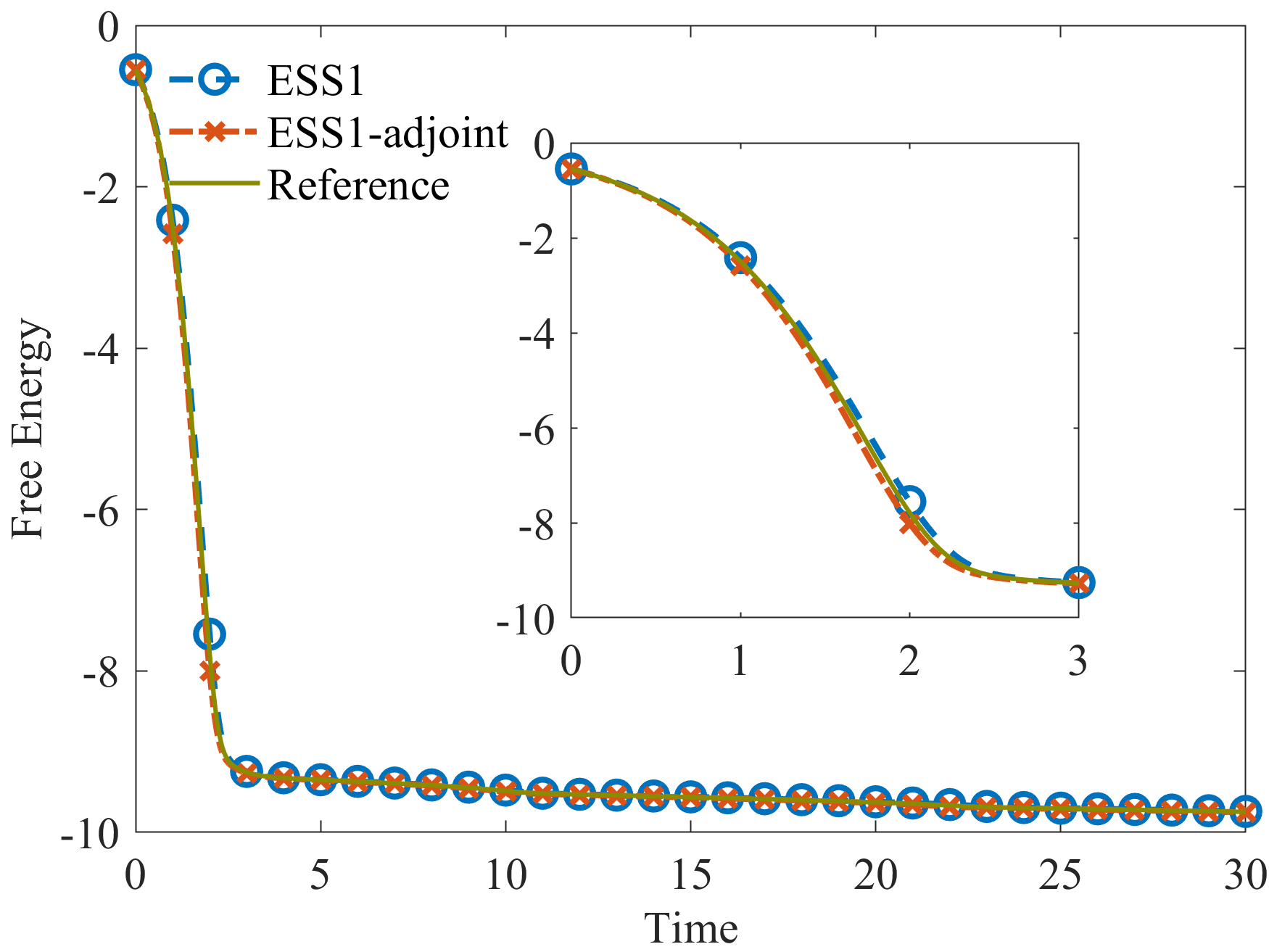}\quad
		\includegraphics[width=0.45\linewidth]{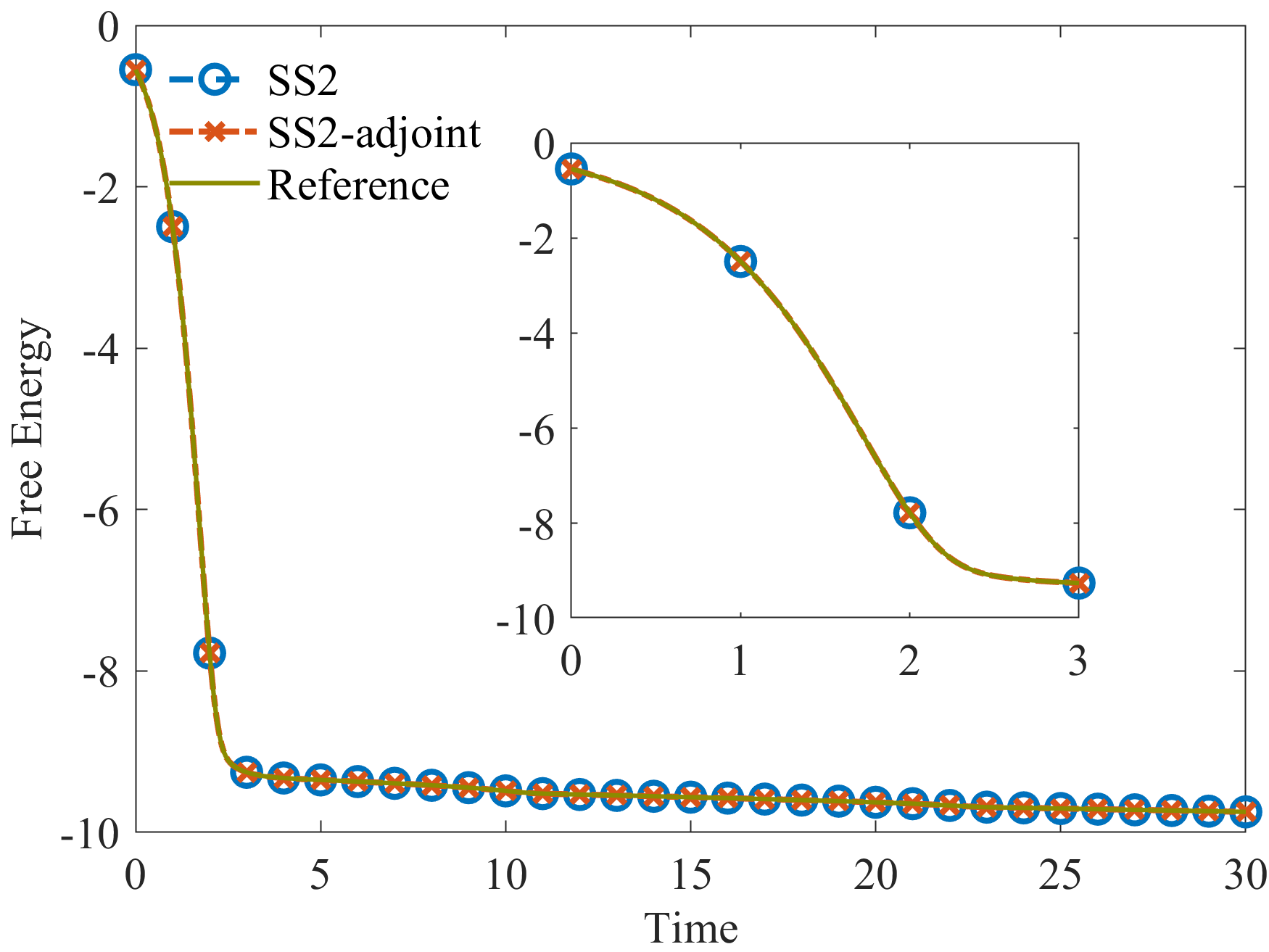}
	}
	\caption{ \label{energy_eight_circles} Evolution of the free energy solved by different methods for the AC equation with double-well potential (the first row) and the Flory-Huggins potential (the second row).}
\end{figure}
\begin{figure}[H]
	\centering
	\subfigure{
		\includegraphics[width=0.45\linewidth]{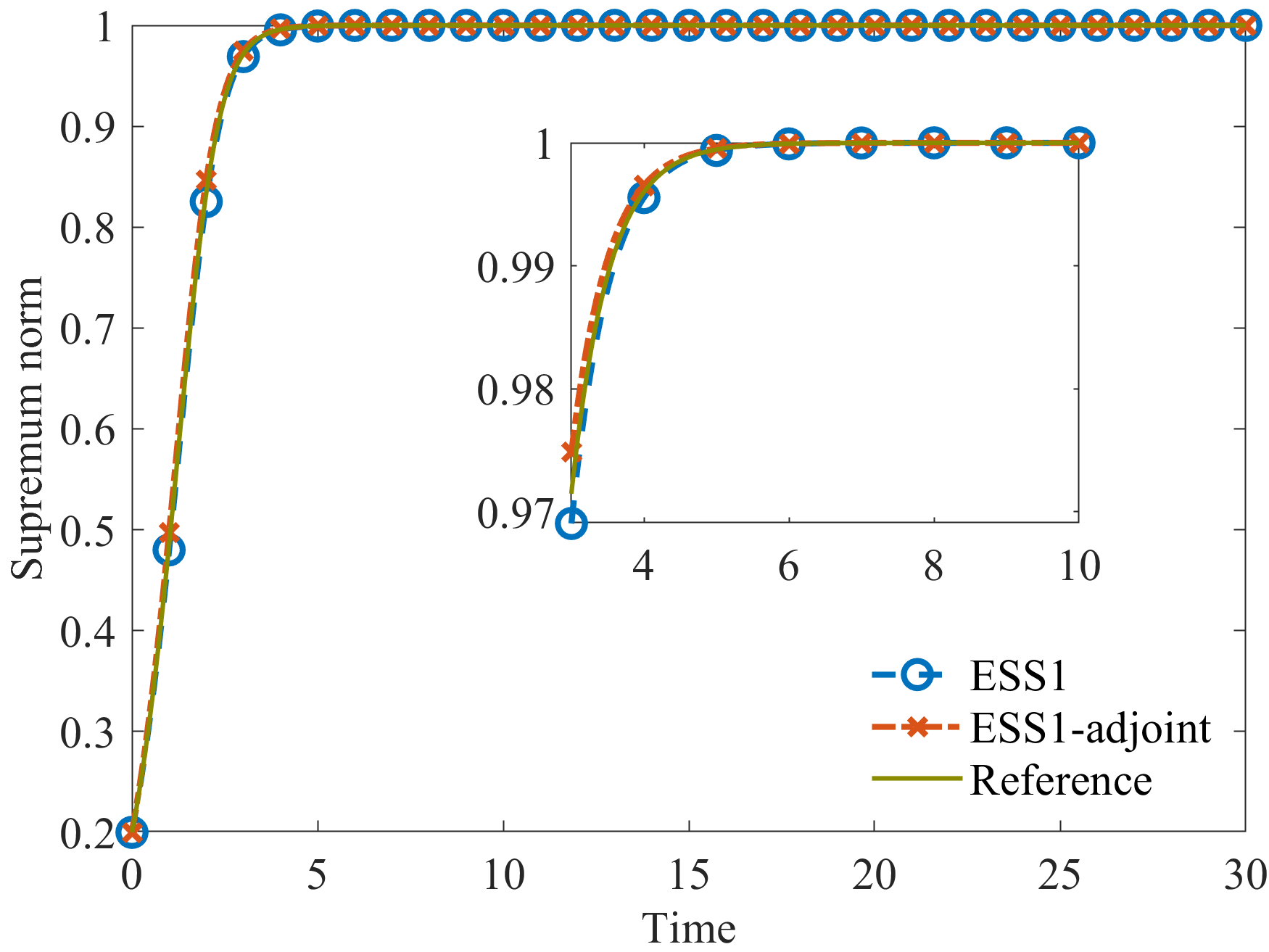}\quad
		\includegraphics[width=0.45\linewidth]{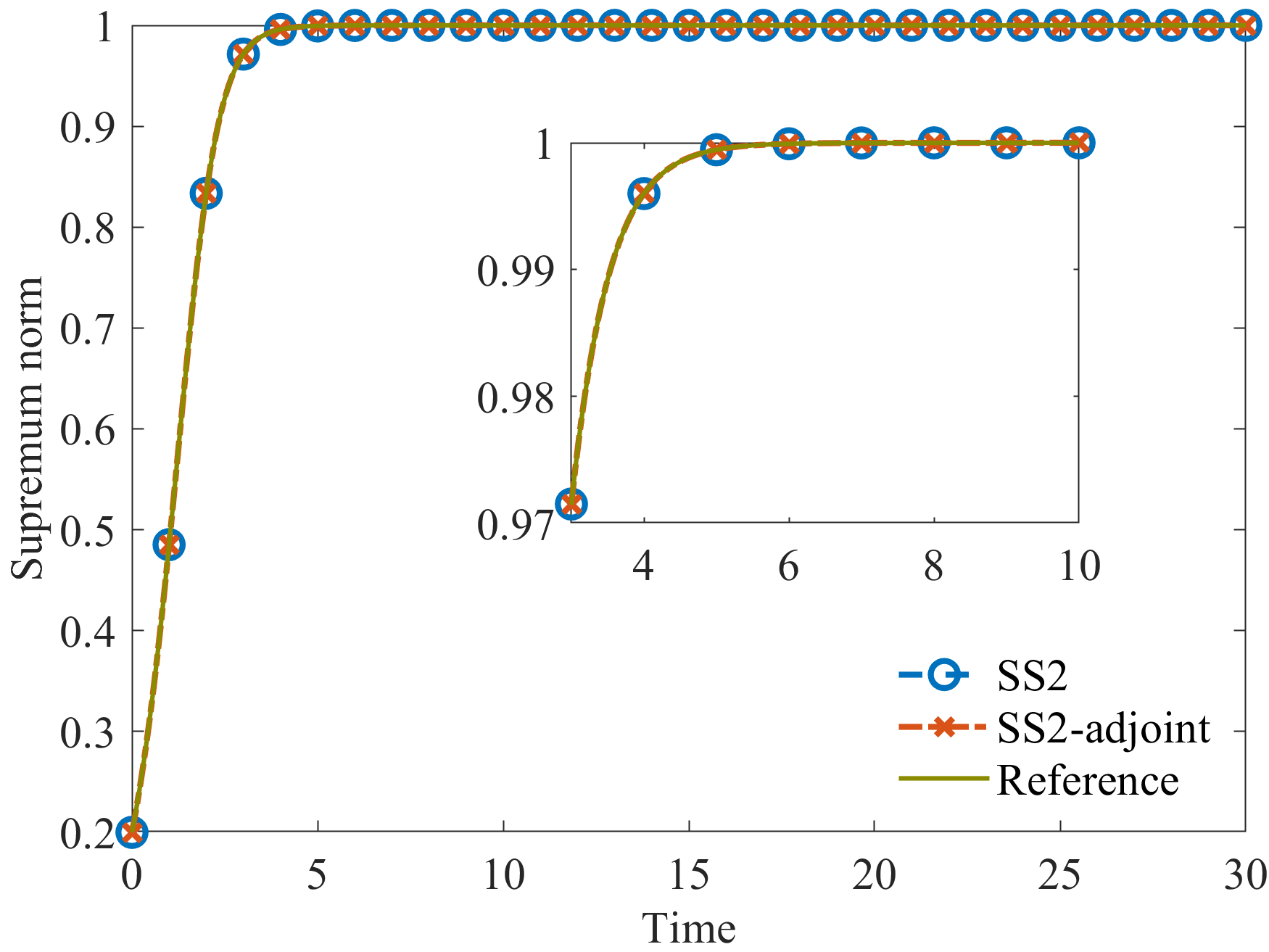}
	}

	\subfigure{
		\includegraphics[width=0.45\linewidth]{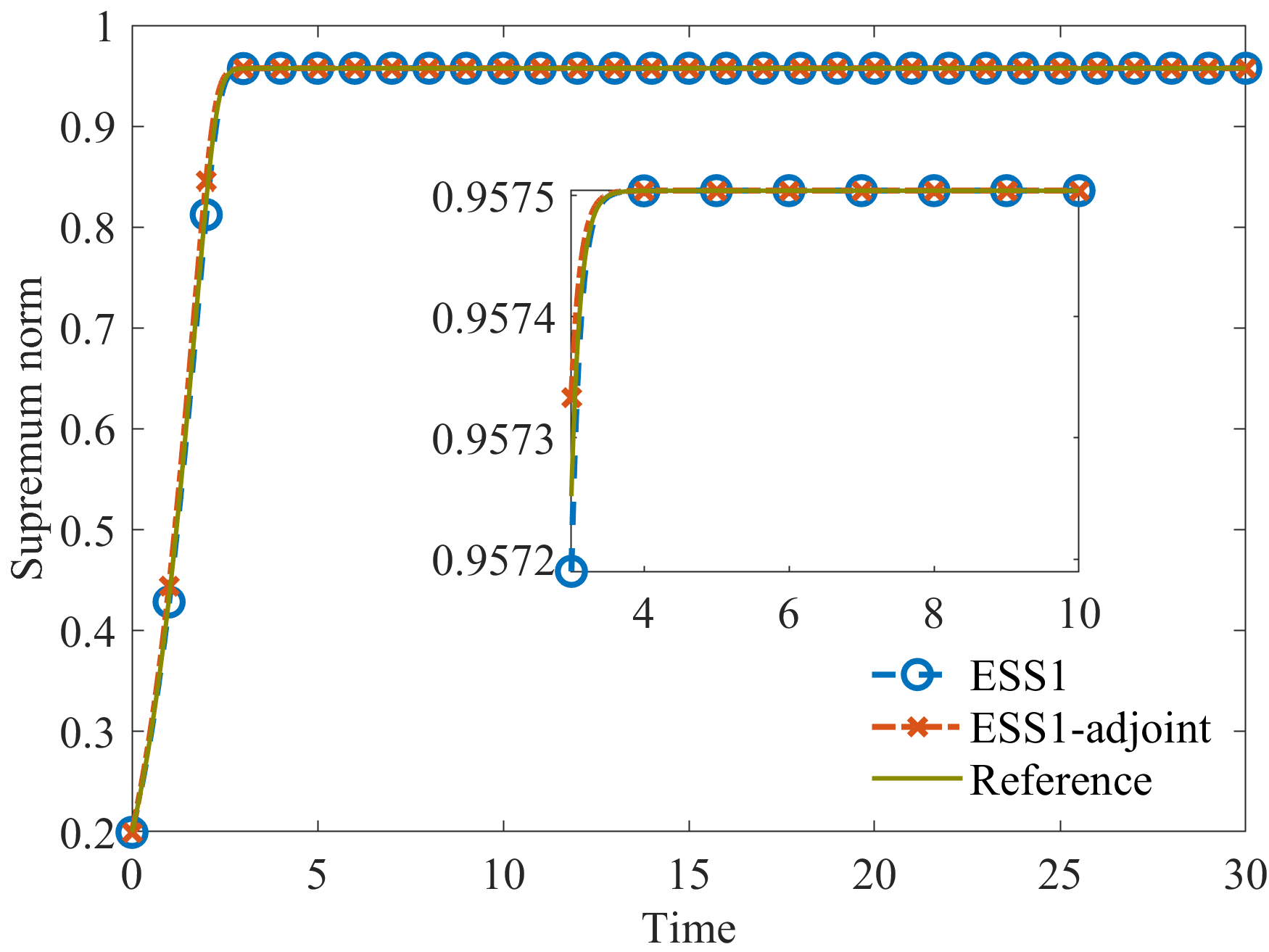}\quad
		\includegraphics[width=0.45\linewidth]{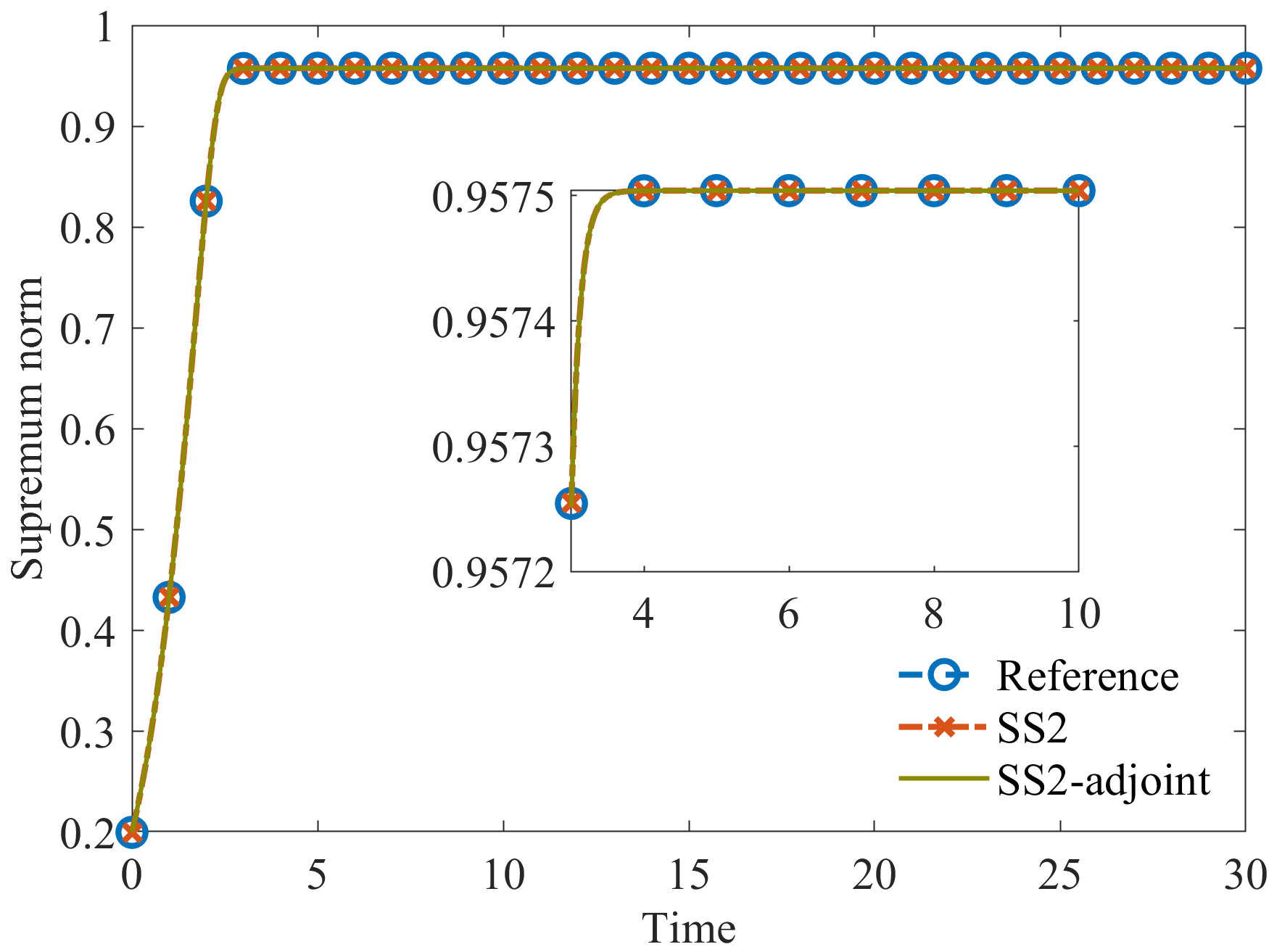}
	}
	\caption{ \label{supremum_norm_eight_circles} Evolution of the supermum norm solved by different methods for the AC equation with double-well potential (the first row) and the Flory-Huggins potential (the second row).}
\end{figure}

The evolution of the phase field is depicted in Figure \ref{phase_field_eight_circles}, where the fusion and the annihilation of the circles take place gradually over time. Figure \ref{energy_eight_circles} displays the evolution of the free energy over time. Throughout all of the provided tests, it is observable that the free energy decreases monotonically. Moreover, the free energy related to the Flory-Huggins potential differs from that associated with the double-well potential. The time history of the discrete supremum norm of the solutions is demonstrated in Figure \ref{supremum_norm_eight_circles}, which is always less than $1$ in the case of double-well potential and does not exceed $0.9575$ in the case of Flory-Huggins potential.

\section{Conclusion}
Based on the stabilized techniques, and the Saul'yev methods, we construct a novel class of efficient methods for the AC equation. All of them can be solved componentwisely. At first, we present a first-order \textbf{ESS1} method, in composition with its adjoint method, the second-order \textbf{SS2}, as well as its adjoint method are constructed. Both \textbf{ESS1} and its adjoint methods have been proved to be energy stable and DMP preserving. Consequently, their composition methods also preserve these two properties. The presented methods are subjected to rigorous analysis for solvability, consistency, and convergence. Numerical experiments are performed to confirm these analysis and demonstrate the superior advantages of the provided methods in efficiency.

\section{Acknowledements}
This work is supported by the National Key Research and Development
Project of China (2018YFC1504205), the National Natural Science Foundation of China (12171245, 11971242).

\end{document}